\def\csname ver@subfig.sty\endcsname{}
\theoremstyle{plain}\newtheorem{theorem}{Theorem}[section]
\theoremstyle{plain}\newtheorem{lemma}{Lemma}[section]
\newtheorem{pro}{Proposition}
\newtheorem{remark}{Remark}
\definecolor{marin}{rgb} {0., 0.3, 0.7}
\definecolor{rouge}{rgb} {0.8, 0., 0.}
\definecolor{sepia}{rgb} {0.8, 0.5, 0.}
\theoremstyle{definition}
\newtheorem{definition}{Definition}
\newcommand*{\transp}[2][-3mu]{\ensuremath{\mskip1mu\prescript{\smash{\mathrm t\mkern#1}}{}{\mathstrut#2}}}    
\DeclareSymbolFont{largesymbol}{OMX}{yhex}{m}{n}
\DeclareMathAccent{\Widehat}{\mathord}{largesymbol}{"62}
\newcommand\THANK[1]{%
  \begingroup
  \renewcommand\thefootnote{}\footnote{#1}%
  \addtocounter{footnote}{-1}%
  \endgroup
}
\begin{document}

\title{\textbf{Exact splitting methods for kinetic and Schr\"odinger equations}}
\date{}

\author[1]{\small \textbf{Joackim Bernier}}
\author[2]{\small \textbf{Nicolas Crouseilles}}
\author[3]{\small \textbf{Yingzhe Li}}
\affil[1]{Institut  de  Math\'ematiques  de  Toulouse  ;  UMR5219,  Universit\'e  de  Toulouse  ;  CNRS,  Universit\'e  Paul Sabatier, F-31062 Toulouse Cedex 9, France }
\affil[2]{Univ Rennes, INRIA, CNRS, IRMAR - UMR 6625, F-35042 Rennes, France}
\affil[3]{ University of Chinese Academy of Sciences,  Beijing 100049; LSEC, Academy of Mathematics and Systems Science, Chinese Academy of Sciences, Beijing 100190,  CHINA; Inria (MINGuS team), France}

\maketitle

\vspace{-1cm}

\THANK{The author J.B. was supported by the French National Research Agency project NABUCO, grant ANR-17-CE40-0025. The author Y.L. is supported by a  scholarship from Academy of Mathematics and Systems Science, Chinese Academy of Sciences.}

\begin{abstract}
In \cite{essiqo}, some exact splittings are proposed for inhomogeneous quadratic differential equations including, for example, transport equations, kinetic equations, and Schr\"odinger type equations with a rotation term.
In this work, these exact splittings are combined with pseudo-spectral methods in space to illustrate 
their high accuracy and efficiency.
\end{abstract}

\setcounter{tocdepth}{1} 

\tableofcontents

\section{Introduction}
Operator splitting methods  have gained a lot of attention in recent years to solve numerically partial differential equations, 
as the subsystems obtained are usually easier to solve or even can be solved exactly, which allows a keen reduction of the computational cost and the derivation of high order time integrators.  For a general introduction to splitting methods, we refer to~\cite{MQ, HO} and references therein. To obtain high order splitting methods, 
usually several subsystems are needed to be solved, 
and proper regularity conditions about the original system must be assumed. However, there exist some systems for which splitting methods can give exact solutions indeed, such as in~\cite{JC2, LH, Ameres}. 

Generally, exact splittings is one kind of splitting methods that give exact solutions for the original systems. 
However, exact splitting are generally available for very simple cases, for which the operators involved commute. 
In~\cite{essiqo}, exact splittings are obtained for a large class of PDEs, namely inhomogeneous quadratic 
differential equations (see definition~\ref{def_comput} below). In this framework, each subsystem can be solved 
accurately and efficiently by pseudo-spectral method or pointwise multiplication. 

In this work, our goal is to illustrate numerically the efficiency of these exact splitting methods which have been proposed for inhomogeneous quadratic differential equations in~\cite{essiqo}. First, we will focus on high dimensional transport equations 
for which efficient exact splittings can be derived from the splitting of the underlying linear ordinary differential equation.  
Second, we will see that exact splitting can be obtained for Fokker-Planck type equations and last but not least, 
several applications are proposed in the case of Schr\"odinger type equations. In this case, 
the derivation of exact splittings is based on 
the Weyl quantization and H\"ormander theory~\cite{Lars}, which reduces the infinite dimensional system 
to finite dimensional system. 
Note that even if exact splittings are applied on inhomogeneous quadratic differential equations, they 
can be used to derive new efficient methods for non-quadratic equations by using composition techniques  
such as Strang splitting for instance. Indeed, the equation can be simply split 
into the quadratic part and the non-quadratic part.

The exact splittings are not only important but also useful for the time integration of PDEs, 
they can be also of great interest at the theoretical level since they can reduce the original complicated evolution equation 
into several simpler operators, which gives a way to analyze the properties for the original system (see \cite{AB}). 
On the numerical side, since the exact splittings we propose can be combined with highly accuracy space discretization 
methods, the resulting fully discretized methods are very accurate and turn out to be very useful to study 
the long time behavior of the original system. We also compare the efficiency of our methods to 
high order splitting methods from the literature and illustrate that in the examples we consider, the exact splitting methods 
are more efficient and accurate.

In this work, exact splittings (and its non-quadratic extensions) are used to simulate transport, kinetic, and Schr\"odinger type equations. After recalling some basic tools introduced and proved in \cite{essiqo}, we focus on the 
numerical performances of the exact splitting in different applications. For transport equations, 
we consider high dimensional systems (dimension $3$ and $4$) and compare with standard methods from the literature, 
namely operator splitting method and direct semi-Lagrangian method (combined with NUFFT interpolation). 
Then, we consider the Fokker-Planck type equations and show that the exact splittings are able to recover 
the property that its solution converges to equilibrium exponentially fast for Fokker-Planck equation, 
and the regularizing effects of Kramer-Fokker-Planck equation.
Lastly, Schr\"odinger type equations are studied numerically in dimension $2$ and $3$. More precisely, 
we consider the magnetic Schr\"odinger equation with quadratic potentials (see \cite{Jin, Ostermann}) and Gross-Pitaevskii equation with one rotation term (see \cite{Bao, wang, review_3}). When non-quadratic terms are considered in these models 
(non quadratic potential or nonlinear terms for instance), it is worth mentioning that the new splittings proposed here 
 give higher accuracy, in particular when the amplitude of non-quadratic terms are small.

\section{Exact splittings}
\label{section1}
In this section, we introduce exact splittings for three kinds of inhomogeneous quadratic differential equations: transport, quadratic Schr\"odinger, and Fokker-Planck equations, which is studied theoretically in \cite{essiqo}. 
We start by introducing what we mean by  inhomogeneous quadratic equations and  exact splitting.

Inhomogeneous quadratic partial differential equations can be written as 
\begin{equation}
\label{linear_pde}
\left\{ \begin{array}{cccl} \partial_t u(t,{\mathbf x}) & =& - p^w u(t,{\mathbf x}), & t\geq 0,\ {\mathbf x}\in \mathbb{R}^n \\ 
				u(0,{\mathbf x}) &=& u_0({\mathbf x}), & {\mathbf x}\in \mathbb{R}^n
\end{array} \right.
\end{equation}
where $n\geq 1$, $u_0\in L^2(\mathbb{R}^n)$ and $p^w$ is an inhomogeneous quadratic differential operator acting on $L^2(\mathbb{R}^n)$.  When the solution at time $t$ of this equation is well defined, it is denoted, as usual, by $e^{-t p^w}u_0$.  This operator $p^w$ is defined through an oscillatory  integral involving a polynomial function 
(called the symbol) $p$ on $\mathbb{C}^{2n}$ of degree $2$. In this context, one can write $p$ as 
\begin{equation}
\label{pX}
p(X)=\transp{X} Q X + \transp{Y}X +c, 
\end{equation}
where $X= (\transp{\mathbf x}, \transp{\boldsymbol  \xi})  = \transp{(x_1,\dots,x_n,\xi_1,\dots,\xi_n)}$, $Q$ is a symmetric matrix of size $2n$ with complex coefficients, 
$Y\in \mathbb{C}^{2n}$ is a vector and $c\in \mathbb{C}$ is a constant.  The associated 
differential operator $p^w$ then writes 
$$
p^w = \transp{\begin{pmatrix} {\mathbf x} \\ -i\nabla \end{pmatrix}}Q\begin{pmatrix} {\mathbf x} \\ -i\nabla \end{pmatrix} + \transp{Y} \begin{pmatrix} {\mathbf x} \\ -i\nabla \end{pmatrix} + c, 
$$

For $(-p^w)$ whose real part is bounded by below on $\mathbb{R}^{2n}$, it generates a strongly continuous semigroup on $L^2(\mathbb{R}^n)$~\cite{Lars}.  In~\cite{essiqo}, one of the authors proved that that $e^{-p^w}$ can be split {\it exactly} 
into simple semigroups. As we shall see below, there are several examples which enter in this framework 
and for which the solution can be split into operators which are easy to compute. In the following definition, 
we define what we mean by exact splitting in this work. 

\begin{definition}
\label{def_comput} 
An operator acting on $L^2(\mathbb{R}^n)$ can be \emph{computed by an exact splitting} if it can be factorized as a product of operators of the form 
\begin{equation}
\label{eq:factor}
e^{ \alpha \partial_{x_j}},\ e^{ i \alpha x_j },\ e^{ i a(\nabla)},\ e^{ i a({\mathbf x}) },\ e^{ \alpha x_k  \partial_{x_j} },\ e^{ -b({\mathbf x})},\ e^{ b(\nabla)},\ e^{\gamma}
\end{equation} 
with $\alpha\in \mathbb{R},\gamma\in \mathbb{C},a,b : \mathbb{R}^n \to \mathbb{R}$ are some real quadratic forms, $b$ is nonnegative and $j,k\in \llbracket 1,n \rrbracket$ and $k\neq j$. As usual, $a(\nabla)$ (resp. $b(\nabla)$) denotes the Fourier multiplier associated with $-a(\xi)$ (resp. $-b(\xi)$), i.e. $a(\nabla) = (-a(\xi))^w$. 
\end{definition}

%

Hence, from Definition~\ref{def_comput} exact splittings mean that every subsystem in (\ref{eq:factor})  
can be solved exactly in time at least in Fourier variables and as such 
can be solved efficiently and accurately by pseudo-spectral methods or pointwise multiplications.  
The resulting fully discretized method will benefit from the spectral accuracy in space 
so that the error will be negligible in practice. 

Below we detail the way we compute the solutions of \eqref{linear_pde} using pseudo-spectral methods. First, note that, being given a factorization of an operator as a product of elementary operators of the form \eqref{eq:factor}, there is a natural and minimal factorization of this operator as product of partial Fourier transforms, inverse partial Fourier transforms and multipliers (i.e. operators associated with a multiplication by a function). So, as usual, we just discretize the partial Fourier transforms, their inverts and the multipliers.

In order to get an approximation of the solution on a large box $[-R_1,R_1]\times \dots \times [-R_n,R_n]$, we discretize the box as a product of grids $\mathbb{G}_1 \times \dots \times \mathbb{G}_n$ where each grid $\mathbb{G}_j$ has $N_j$ points and is of the form
\begin{equation}
\label{grid}
 \mathbb{G}_j = h_j \left\llbracket - \left\lfloor \frac{N_j-1}2 \right\rfloor ,\left\lfloor \frac{N_j}2 \right\rfloor  \right\rrbracket
\end{equation}
where $h_j = 2R_j/N_j$ is its step-size. Associated with such a grid, there is its dual, denoted  $\widehat{\mathbb{G}_j}$ and defined by
$$
\widehat{\mathbb{G}_j} = \eta_j \left\llbracket - \left\lfloor \frac{N_j-1}2 \right\rfloor ,\left\lfloor \frac{N_j}2 \right\rfloor  \right\rrbracket
$$
where $\eta_j = \pi / R_j$ is its step-size. In this paper, the variable implicitly naturally associated with $\mathbb{G}_j$ (resp. $\widehat{\mathbb{G}_j}$) is denoted $g_j$ (resp. $\omega_j$). 

If $\mathcal{L}$ is a product of $j-1$ grids (and duals of grids) and $\mathcal{R}$ is a product of grids (and duals of grids) then the \emph{discrete $j^{\mathrm{st}}$ partial Fourier transform} on $\mathcal{L} \times  \mathbb{G}_j \times \mathcal{R}$ is defined by
$$
 \mathcal{F}_j : \left\{ \!\!\!\!\begin{array}{ccc} \mathcal{L} \times  \mathbb{G}_j \times \mathcal{R} &\to& \mathbb{C}^{ \mathcal{L} \times  \widehat{\mathbb{G}_j} \times \mathcal{R}} \\
\psi & \mapsto &  \big( \displaystyle h_j \sum_{g_j \in \mathbb{G}_j} \psi_{r,g_j, \ell} e^{-i g_j\omega_j } \big)_{ (r,\omega_j, \ell) }. \end{array} \right. 
$$
The discrete partial inverse Fourier transforms are defined similarly and are the inverses of the discrete inverse Fourier transforms
$$
 \mathcal{F}_j^{-1} : \left\{ \!\!\!\!\begin{array}{ccc} \mathcal{L} \times  \widehat{\mathbb{G}_j} \times \mathcal{R} &\to& \mathbb{C}^{ \mathcal{L} \times  \mathbb{G}_j \times \mathcal{R}} \\
\psi & \mapsto &  \big( \displaystyle \frac{\eta_j}{2\pi} \sum_{\omega_j \in \widehat{\mathbb{G}_j}} \psi_{r,\omega_j, \ell} e^{i g_j\omega_j } \big)_{ (r,g_j, \ell) }. \end{array} \right. 
$$
Note that these discrete transforms can be computed efficiently using Fast Fourier Transforms. Finally, the multipliers are naturally discretized through pointwise multiplications. An explicit example is provided in Algorithm \ref{algo1} for Schr\"odinger equations.

\section{Application to transport equations}
\label{transport}
In this section, we introduce the exact splittings for constant coefficients transport equations, which is one kind of quadratic equations.
The transport equation we consider here is 
\begin{align}
&\partial_t f({\bf x}, t) =  ({ M}  {\bf x})\cdot \nabla f({\bf x},t), \quad {\mathbf x} \in {\mathbb R}^n, \ n\geq 1, \ f({\mathbf x}, t=0) = f_0({\mathbf x}), \label{eq:rot}
\end{align}
where $M$ is a real square matrix of size $n\geq 1$ such that
\begin{equation}
\label{bobof_prop}
\left\{ \begin{array}{lll} \forall i, \ &M_{i,i} = 0, \\
				\exists i,\forall j  \neq i , & M_{j,i} \neq 0,
\end{array} \right.
\end{equation} 
and the corresponding symbol of (\ref{eq:rot}) is $p(X) = -i (M {\mathbf x}) \cdot {\boldsymbol \xi}$ according to the notations 
\eqref{pX}.

Even if the solution of (\ref{eq:rot}) can be computed from the initial condition as $f({\mathbf x}, t) = f_0(e^{tM}{\mathbf x})$, 
efficient numerical methods are required when the initial data is only know on a mesh or when (\ref{eq:rot})  
is a part of a more complex model.  Below, we start by giving some details of the time (exact) splitting 
before illustrating the efficiency of the strategy with numerical results. 
 
\subsection{Presentation of the exact splitting} 
In this part, we construct an exact splitting for \eqref{eq:rot}. 
Let us start with a simple example for $n=2$ and $M = \begin{pmatrix} 0 & 1 \\ -1 & 0 \end{pmatrix}$, $e^{tM}$ becomes a two dimensional rotation matrix, 
which can be expressed as the product of three shear matrices (see \cite{JC2}) 
\begin{equation}\label{eq:shear}
e^{tM}  = 
\begin{pmatrix} 1&\tan(\theta/2) \\
0 & 1
 \end{pmatrix}
 \begin{pmatrix} 1&0 \\
-\sin{\theta} & 1
 \end{pmatrix}
 \begin{pmatrix} 1&\tan(\theta/2) \\
0 & 1
 \end{pmatrix}.
\end{equation}
As a consequence,  the computation of $f$ can be done by solving three one dimensional linear equations (in $x_1$, $x_2$, and $x_1$ directions successively), i.e., 
\begin{equation}
\begin{aligned}
&f_0({\bf x}) \stackrel{\tan(\theta/2)}{\longrightarrow} f_0( x_1 + \tan(\theta/2)x_2, x_2) \stackrel{-\sin(\theta)}{\longrightarrow} f_0(x_1+ \tan(\theta/2)(x_2-\sin(\theta)x_1), x_2-\sin(\theta)x_1)\\
& \stackrel{\tan(\theta/2)}{\longrightarrow} 
 f({\bf x}, t).
\end{aligned}
\end{equation}
Formula~(\ref{eq:shear}) has been used in the computation of Vlasov--Maxwell equations to improve efficiency and accuracy by avoiding high dimensional reconstruction in~\cite{JC2, Ameres}. 

For the case $n=3$ and $M$ is skew symmetric, similar formula of expressing the rotation 
matrix as the product of 4 shear matrices is proposed in~\cite{shear, constants}.
To generalize this formula to arbitrary dimension, we have the following results proved in~\cite{essiqo}.
 \begin{pro}
 \label{prop_rot_gen} Let $M$ be a real square matrix of size $n\geq 1$ satisfying condition (\ref{bobof_prop}), then there exist $t_0>0$ and an analytic function $(y^{(\ell)},(y^{(k)})_{k=1, \dots, n ; k\neq i},y^{(r)}):(-t_0,t_0)\to \mathbb{R}^{n\times (n+1)}$ satisfying
\begin{equation}
\label{penible_prop}
\left\{ \begin{array}{lll} y^{(\ell)}_i = y^{(r)}_i =0 \\
				 \forall k\neq i, \ y^{(k)}_k = 0
\end{array} \right.
\end{equation} 
such that for all $t\in  (-t_0,t_0)$ we have
\begin{equation}\label{eq:esr}
e^{t M{\mathbf x}\cdot \nabla} = e^{t (y^{(\ell)}(t)\cdot {\mathbf x})\partial_{x_i}} \left(  \prod_{ k\neq i} e^{t (y^{(k)}(t)\cdot {\mathbf x})\partial_{x_k}}  \right)  e^{t (y^{(r)}(t)\cdot {\mathbf x})\partial_{x_i}}.
\end{equation}
\end{pro}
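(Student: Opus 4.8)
The strategy is to reduce the statement about the flow $e^{tM\mathbf x\cdot\nabla}$ to a purely finite-dimensional statement about the matrix exponential $e^{tM}$, and then to prove a factorization result for $e^{tM}$ into elementary shear matrices. The key observation is the conjugation identity: for any linear vector field generated by a matrix $A$, the flow acts on functions by $(e^{tA\mathbf x\cdot\nabla}f)(\mathbf x) = f(e^{tA}\mathbf x)$; and if $A$ has a special form, say $A = \mathbf e_j (y\cdot)$, i.e. $A_{j,\cdot} = y^{\mathrm t}$ with all other rows zero (with moreover $y_j=0$, so $A$ is nilpotent of order $2$), then $e^{tA} = I + tA$ is a shear matrix and $e^{tA\mathbf x\cdot\nabla}$ is exactly one of the elementary operators $e^{t(y\cdot\mathbf x)\partial_{x_j}}$. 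Since the map $A\mapsto e^{tA\mathbf x\cdot\nabla}$ turns products of flows into compositions in the matching order and turns matrix products $e^{tA_1}e^{tA_2}$ of such flows into... — more precisely, composition of the flows corresponds to the product of the matrices in reverse order — the identity \eqref{eq:esr} is \emph{equivalent} to the matrix identity
\begin{equation}
\label{eq:matrix-factor}
e^{tM} = \big(I + t\, \mathbf e_i (y^{(r)}(t))^{\mathrm t}\big)\left(\prod_{k\neq i} \big(I + t\, \mathbf e_k (y^{(k)}(t))^{\mathrm t}\big)\right)\big(I + t\, \mathbf e_i (y^{(\ell)}(t))^{\mathrm t}\big),
\end{equation}
where the product over $k\neq i$ is taken in the same nesting order as in \eqref{eq:esr}, and the constraints \eqref{penible_prop} say precisely that each factor is a shear (its $j$-th diagonal entry, hence the whole factor's unipotent structure, is preserved). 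So the real content is: a matrix $M$ satisfying \eqref{bobof_prop} has, for small $t$, such a shear factorization of $e^{tM}$ with analytic coefficients.

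To prove \eqref{eq:matrix-factor}, I would proceed constructively by the implicit function theorem / direct elimination. Write $S_i(v) = I + \mathbf e_i v^{\mathrm t}$ for a row vector $v$ with $v_i = 0$, and similarly $S_k(v)$ with $v_k=0$; each such $S$ is invertible with inverse $S_i(v)^{-1} = I - \mathbf e_i v^{\mathrm t}$ (using $v_i=0$), and left-multiplication by $S_k(v)$ adds a multiple of the other coordinates to row $k$ only. The plan is: given the target $G := e^{tM}$, first choose $y^{(r)}$ so that $G\cdot S_i(t y^{(r)})^{-1}$ has its $i$-th column equal to $\mathbf e_i$ except possibly in row $i$ — this is a linear system in the $n-1$ free entries of $y^{(r)}$, solvable provided a certain minor is invertible; then peel off the $S_k$ factors for $k\neq i$ one at a time to clear the remaining off-diagonal entries, row by row; finally the leftover is forced to be $S_i(t y^{(\ell)})$ by a determinant/bookkeeping argument. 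At $t=0$ everything is the identity and all the $y$'s vanish; the fact that the relevant minors (which control solvability at each step) are nonzero at $t=0$ is exactly where hypothesis \eqref{bobof_prop} enters — the condition $M_{j,i}\neq 0$ for all $j\neq i$ guarantees that the derivative in $t$ of the relevant column of $e^{tM}$ at $t=0$ has all the needed nonzero entries, so the linear systems are nonsingular for $t$ small. Analyticity of the solution then follows from the analytic implicit function theorem (the defining equations are polynomial in the entries of $G=e^{tM}$ and in the unknowns, and $e^{tM}$ is analytic in $t$), which also furnishes the $t_0>0$.

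The main obstacle, and the step requiring the most care, is the \textbf{solvability of the elimination at each stage}, i.e. showing that the sequence of linear systems one solves to determine $y^{(r)}, (y^{(k)})_{k\neq i}, y^{(\ell)}$ is nonsingular for $t$ in a neighbourhood of $0$. Concretely: after fixing the ordering of the $S_k$ factors, one must verify that clearing the off-diagonal entries in the prescribed order never asks to divide by something that vanishes at $t=0$. This is a genuine combinatorial constraint on which entries can be eliminated when, and it is precisely matched to the structure \eqref{bobof_prop}: the vanishing diagonal $M_{i,i}=0$ ensures the shears have zero diagonal in the right slots, and the column condition $M_{j,i}\neq 0$ $(\forall j\neq i)$ is what makes the $i$-indexed shears $S_i(ty^{(r)})$, $S_i(ty^{(\ell)})$ able to do their job to first order in $t$. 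I would organize this by computing everything to first order in $t$ (where $e^{tM} \approx I + tM$ and each $S \approx I + t\,(\text{shear generator})$), reducing \eqref{eq:matrix-factor} modulo $t^2$ to the statement that $M$ decomposes as a sum $M = \mathbf e_i (\tilde y^{(\ell)})^{\mathrm t} + \sum_{k\neq i}\mathbf e_k(\tilde y^{(k)})^{\mathrm t} + \mathbf e_i(\tilde y^{(r)})^{\mathrm t}$ with the stated zero-pattern — which is a triviality once one notes the two $\mathbf e_i$ terms combine and $M$'s $i$-th row can be split off while the zero diagonal handles the rest — and then invoking the implicit function theorem to upgrade this infinitesimal decomposition to an exact analytic one for small $t$. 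The hypothesis $M_{j,i}\neq0$ is what guarantees the Jacobian of this upgrade is invertible at $t=0$.
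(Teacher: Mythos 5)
Your reduction of \eqref{eq:esr} to a factorization of the matrix $e^{tM}$ into unipotent shear factors $I+t\,e_j\otimes y$ (with the order reversal coming from $(e^{tA\mathbf x\cdot\nabla}f)(\mathbf x)=f(e^{tA}\mathbf x)$) is exactly the route the paper points to: the proof is not reproduced here but deferred to \cite{essiqo}, and for the analogous statements \eqref{eq:exactode} and \eqref{eq:MS2ODE} the authors describe precisely this passage from a semigroup identity to a matrix identity, with the coefficients then obtained from the implicit function theorem. So your overall strategy is the intended one, and your parameter count ($(n+1)(n-1)=n^2-1$ unknowns for a target in $\mathrm{SL}_n(\mathbb{R})$, since $\operatorname{tr}M=0$) is correct.

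The gap is in the application of the implicit function theorem. The first-order map $y\mapsto e_i\otimes y^{(r)}+\sum_{k\neq i}e_k\otimes y^{(k)}+e_i\otimes y^{(\ell)}$ is \emph{not} an isomorphism: it has the $(n-1)$-dimensional kernel $\{\delta y^{(r)}=-\delta y^{(\ell)},\ \delta y^{(k)}=0\}$ (your own remark that ``the two $\mathbf e_i$ terms combine''), and its range is only the space of zero-diagonal matrices, of dimension $n^2-n$. Moreover this Jacobian does not involve $M$ at all, so the hypothesis $M_{j,i}\neq 0$ cannot be what ``makes it invertible'' at that stage. To close the argument one must expand to second order in $t$: the off-diagonal part of the equation fixes $y^{(k)}$ to be the $k$-th row of $M$ and fixes $y^{(r)}+y^{(\ell)}$ to be the $i$-th row, while the diagonal entries of the shear product, divided by $t^2$, supply the missing $n-1$ equations. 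Differentiating these along the kernel direction $\delta y^{(r)}=v=-\delta y^{(\ell)}$ at the first-order solution yields (using $(y^{(j)})_i=M_{j,i}$) the diagonal map $v\mapsto(-M_{j,i}v_j)_{j\neq i}$, which is invertible precisely when $M_{j,i}\neq 0$ for all $j\neq i$, i.e.\ under \eqref{bobof_prop}. Only after this Lyapunov--Schmidt-type reduction (or an equivalent $t$-rescaled formulation of the system) does the analytic implicit function theorem apply and deliver $t_0$ and the analytic branch. Your alternative ``elimination'' sketch leaves the same solvability question open and contains a bookkeeping slip: right-multiplying by $(I+t\,e_i\otimes y)^{-1}=I-t\,e_i\otimes y$ with $y_i=0$ leaves the $i$-th \emph{column} untouched and modifies the others, so the step ``choose $y^{(r)}$ so that the $i$-th column becomes $e_i$'' does not do what you intend. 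As written, neither variant establishes the factorization for small $t\neq 0$.
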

%

\begin{remark}
Proposition \ref{prop_rot_gen} not only enables to recover some results from the literature 
(in particular when $M$ is skew symmetric) but it also claims that $n$ dimensional linear equations of the form \eqref{eq:rot} 
can be split into $(n+1)$ one dimensional linear equations which can be solved very efficiently by means of pseudo-spectral 
methods or semi-Lagrangian methods.  In particular, this turns out to be much more efficient (only in terms of efficiency) 
than standard Strang splitting which would require $2(n-1)+1$ linear equations to solve. Let us also recall that Strang splitting 
produces second order error terms whereas the splitting proposed in Proposition \ref{prop_rot_gen} are exact in time.  
\end{remark}

\begin{remark}
Another alternative to solve \eqref{eq:rot} would be the direct $n$-dimensional semi-Lagrangian method. 
However, this approach requires a huge complexity at the interpolation stage since high-dimensional algorithms 
are known to be very costly. 
\end{remark}

\subsection{Numerical results}
For the transport equation, exact splitting are used to solve 3D and 4D transport equations, and compared with the usual Strang splitting and Semi-Lagrangian method combined with NUFFT in space.  We then are interested in the numerical approximation 
of 
\begin{equation}
\label{eq:nDtrans}
\partial_t{ f({\mathbf x}, t)}=  (M{\mathbf x})\cdot \nabla f({\mathbf x}, t), \quad f({\mathbf x}, t=0) = f_0({\mathbf x}), \quad {\mathbf x}\in \mathbb{R}^n, 
\end{equation}
for $n=3, 4$. For numerical reasons, the domain will be truncated to $x\in [-R, R]^n$ and we will consider $N$ points 
per direction so that the mesh size is $h=2R/N$. The grid, defined as usual through \eqref{grid}, is denoted $\mathbb{G}^n$.  We shall denote by $f^n_g$ an approximation of 
$f(n\Delta t, g)=f_0((e^{n\Delta t M}g))$ the exact solution of \eqref{eq:nDtrans} with  $g\in \mathbb{G}^n$ 
and $\Delta t>0$ the time step. We also define the $L^2$ 
error between the numerical solution and the exact one as 
\begin{equation}
 \sqrt{h^n \sum_{g\in \mathbb{G}^n} |f^n_g - f(n\Delta t,g)|^2}
\label{error_L2}
\end{equation}

\noindent{\bf 3D transport equation}\\
We consider \eqref{eq:nDtrans} in the case $n=3$ with 
$$
M = \begin{pmatrix} 0 & -0.36 & -0.679\\
 0.36 & 0 & -0.758\\
 0.679 & 0.758 & 0 
 \end{pmatrix}. 
 $$ 
The initial value is chosen as follows 
 $$
f_0({\bf x}) = \frac{1}{2(\pi \beta)^2}\Big(e^{-(x_1-0.3)^2/\beta}+ e^{-(x_1+0.3)^2/\beta}\Big)e^{-x_2^2/\beta} e^{-x_3^2/\beta}, 
$$
with  $\beta = 0.06$. 
The following three numerical methods are used to solve the three dimensional transport equation 
\begin{itemize}
\item \text{NUFFT}: direct 3D Semi-Lagrangian method combined with interpolation by NUFFT; this method is exact in time. 
\item \text{Strang}: Strang directional splitting method combined with Fourier pseudo-spectral method; this method is second order accurate in time. 
\item \text{ESR}: Exact splitting (\ref{eq:esr}) combined with Fourier pseudo-spectral method; this method is exact in time.
\end{itemize}
Let us detail the coefficients used for ESR. From Prop~\ref{prop_rot_gen}, we have 
$$
e^{\Delta t{M}{\mathbf x}\cdot \nabla} = e^{\Delta t(y^{(\ell)}\cdot {\mathbf x})\partial_{x_3}}  e^{\Delta t(y^{(2)}\cdot {\mathbf x})\partial_{x_1}} e^{\Delta t(y^{(3)}\cdot {\mathbf x})\partial_{x_2}} e^{\Delta t(y^{(r)}\cdot {\mathbf x})\partial_{x_3}},  
$$
where the coefficients are as follows when $\Delta t = 0.3$
$$
y^{(\ell)} \simeq \begin{pmatrix} 
0.345224363827786 \\
 0.379204563977292 \\
0
 \end{pmatrix}, \;\; \;\; 
y^{(2)} \simeq \begin{pmatrix} 
  0 \\
 -0.036460351430518 \\
 -0.664426864374562
 \end{pmatrix},
 $$
 and 
$$
y^{(3)} \simeq \begin{pmatrix} 
0.036504386840795 \\
 0 \\
 -0.742627150015417
 \end{pmatrix}, 
\;\; \;\; 
y^{(r)} \simeq \begin{pmatrix} 
 0.339075826535304 \\
0.384712290654848 \\
0
 \end{pmatrix}.
 $$

First, the time evolution of $L^2$ error (defined by  \eqref{error_L2}) in semi-$\log$ scale is plotted in Figure \ref{fig:error_toy} 
for Strang and ESR for $N=64$ and $\Delta t=0.3$. As expected, we observe that the error from 
ESR is close to the level of machine precision whereas the error from Strang is much larger. 
We can also see that the error from Strang has a almost periodic behavior (similar to what has been observed in 2D in \cite{JC2}) 
that deserves some further analysis in a future work. 
We can see that the error from Strang is increasing with time whereas the error for ESR remains close to $10^{-11}$. 
We also compare in Figure \ref{fig:error_toy} the CPU time of the two methods 
and the NUFFT method (which also gives error close to machine precision) 
by running them on $100$ steps in $\log-\log$ scale. 
We can observe that ESR is the most efficient. 
Indeed, for each time step, $5$ one dimensional transport equations are needed for Strang splitting, whereas 
ESR only has $4$ one dimensional transport equations to solve. Moreover, the NUFFT method is the most expensive 
method. 
Even if NUFFT and ESR have the same complexity ${\cal O}(N^3 \log(N))$, ESR is clearly cheaper 
which means that it involves a smaller constant. Moreover, let us mention that  parallelization 
can be developed to improve the efficiency of splitting methods like ESR (see \cite{JC2, coulaud}).


\begin{figure}[htbp]
\center{
\subfigure[]{\includegraphics[scale=0.35]{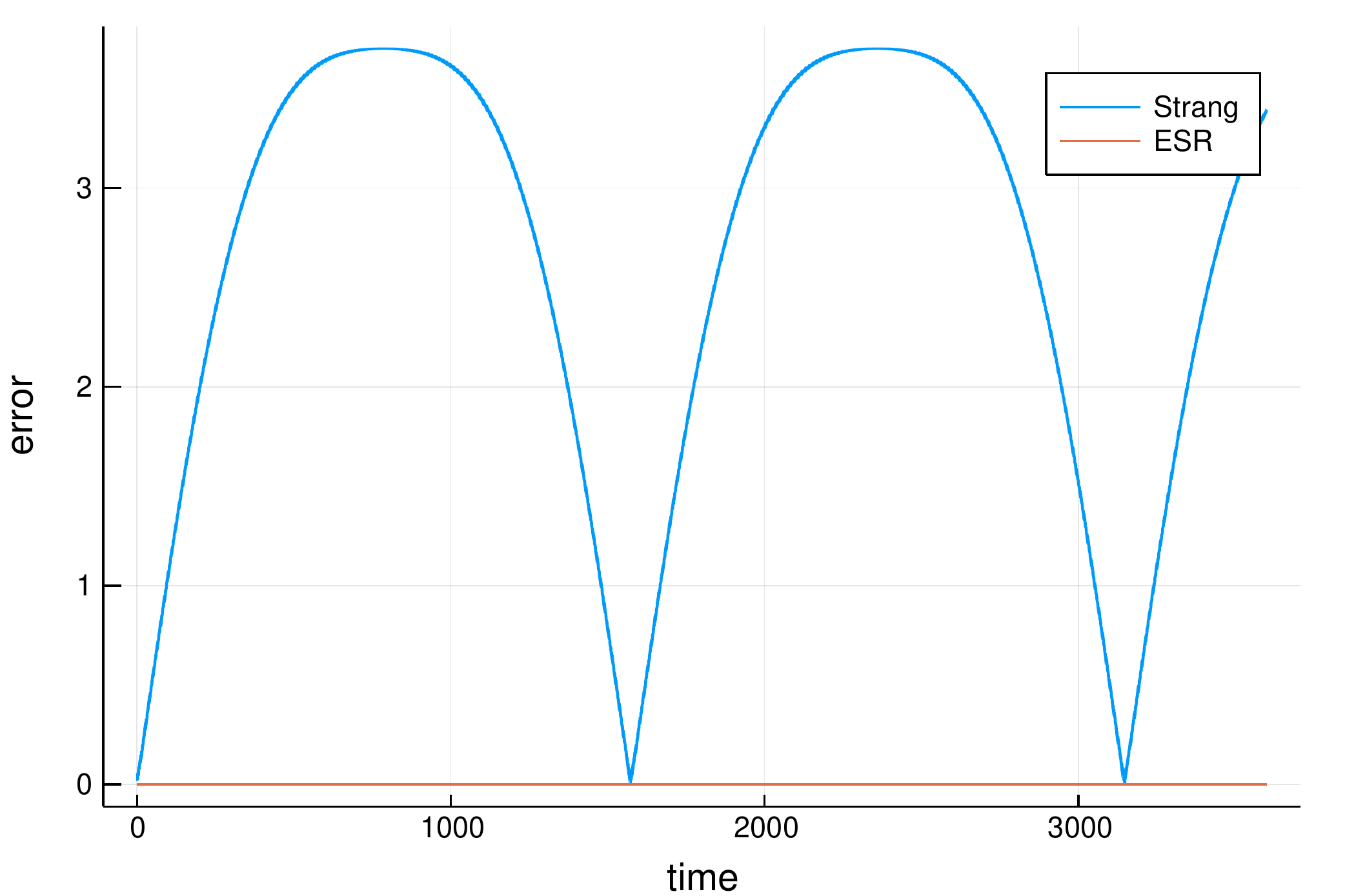}}
\subfigure[]{\includegraphics[scale=0.35]{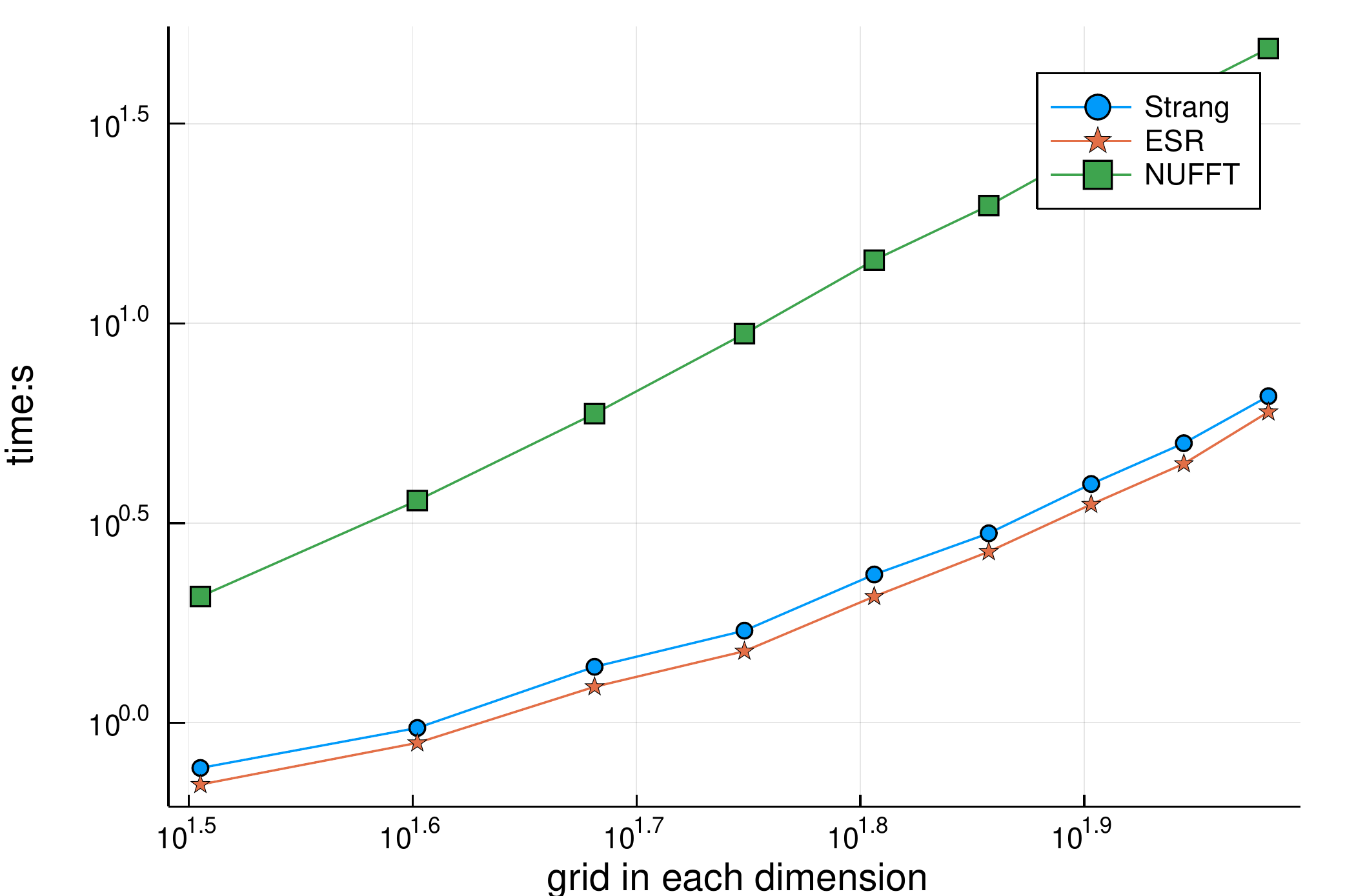}}
}
\caption{(a)Time evolution of $L^2$ error (semi-$\log_{10}$ scale) for NUFFT, ESR, and Strang for 3D transport problem 
with grids $64^3$ and step size $\Delta t = 0.3$; (b) CPU time for NUFFT, ESR and Strang of after running $100$ steps.}\label{fig:error_toy}
\end{figure}


\noindent{\bf 4D transport equation}\\
We consider now the case $n=4$ where the matrix $M$ in \eqref{eq:rot} is given by 
$$
M = \begin{pmatrix} 
0 & 1 &-1.5  & -3\\
 -1 & 0 & 2 & 1\\
 1.5 & -2 & 0 & 0 \\
  3 & -1 & 0 & 0
 \end{pmatrix}.
 $$
 The domain is defined by $R = 5$ and the initial value is 
  \begin{align}
 f_0({\mathbf x}) = \left(\frac{2}{\pi}\right)^4 e^{-{|\mathbf x|}^2}.
 \end{align}
Since the direct 4D semi-Lagrangian method would be too costly, we compare here the Strang directional 
splitting and the new method ESR. From Proposition ~\ref{prop_rot_gen}, we define 
the ESR method by  
$$
e^{\Delta t M{\mathbf x}\cdot \nabla} = e^{\Delta t(y^{(\ell)}\cdot {\mathbf x})\partial_{x_2}}  \left(  e^{\Delta t(y^{(1)}\cdot {\mathbf x})\partial_{x_1}}  e^{\Delta t(y^{(3)}\cdot {\mathbf x})\partial_{x_3}} e^{\Delta t(y^{(4)}\cdot {\mathbf x})\partial_{x_4}}   \right)e^{\Delta t(y^{(r)}\cdot {\mathbf x})\partial_{x_2}},
$$  
whose coefficients are given by ($\Delta t=0.05$ here) 
$$
y^{(\ell)} \simeq \begin{pmatrix} 
 7.239003439520237 \\
 0 \\
 0.114915806141710  \\
 5.520828626111525  
 \end{pmatrix}, 
 \;\; 
 y^{(r)} \simeq \begin{pmatrix} 
 -7.124076298503538 \\
  0 \\
 -1.578152453772511  \\
  -5.445447353939971
 \end{pmatrix}, 
 $$
and 
$$
 y^{(1)} \!\simeq\! \begin{pmatrix} 0 \\
 -0.843365270467026 \\
 1.542035786578973  \\
  3.239936743553417
 \end{pmatrix}\!\!, \; 
 y^{(3)} \!\simeq\!\! \begin{pmatrix} 
 -2.171812638482090 \\
 1.937050589058292 \\
 0  \\
 -0.368205582274782
 \end{pmatrix}\!\!,
\;\; 
y^{(4)} \!\simeq\!\! \begin{pmatrix} 
 -3.333162655369549 \\
 0.915289658696578 \\
 0.087822505478295  \\
  0
 \end{pmatrix}\!\!. 
 $$

In the following numerical results, the space grid has $N=47$ points per direction and the final computation time is $t = 30$ 
for the two methods. In Figure \ref{fig:4Dcontour}, the time evolution (in semi-$\log$ scale) of the $L^2$ error 
defined in \eqref{error_L2} is plotted for the Strang method: the error grows up to $10^{-4}$ whereas the error 
for ESR is about $10^{-11}$. Moreover, some contour plots are also presented in Figure \ref{fig:4Dcontour}: the two-dimensional 
quantity $f(t, x_1, x_2, x_3=-0.9574, x_4=-0.9574)$ for $t=0$ and $t=30$ is displayed for ESR and Strang.  
One can observe that the Strang method has large errors which is partly due to the wrong angular velocity. 
Let us remark that even if pseudo-spectral method have been chosen here to make the error close to machine precision,  
alternative reconstruction methods can also be chosen such as high order interpolation methods (see \cite{highorder}). 
Regarding the complexity, only $n+1=5$ shears are required in the exact splitting for each time step whereas 
$2n-1=7$ shears are needed for the Strang splitting. 

\begin{figure}[htbp]
\center{
\subfigure[]{\includegraphics[scale=0.45]{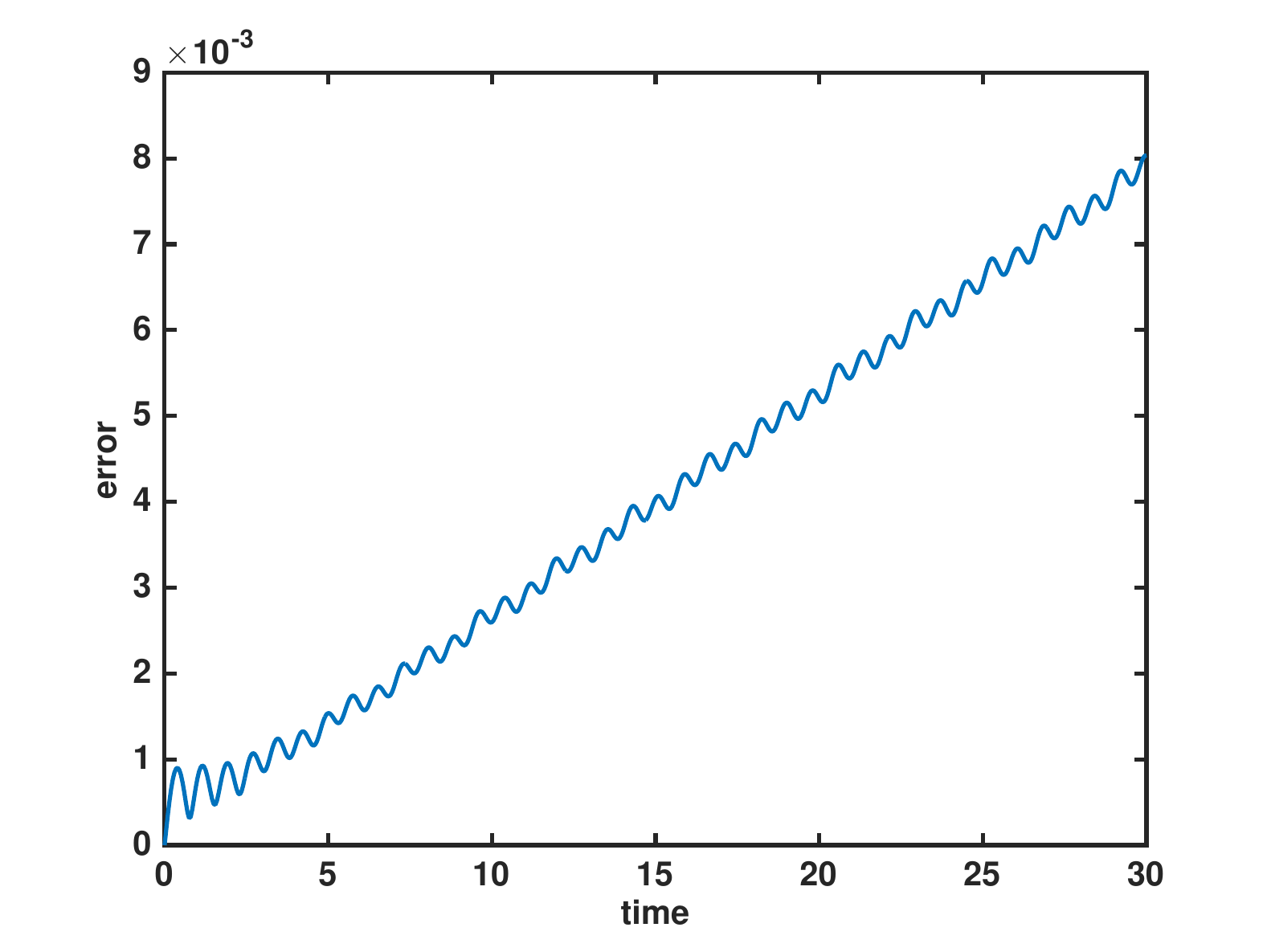}}
\subfigure[]{\includegraphics[scale=0.45]{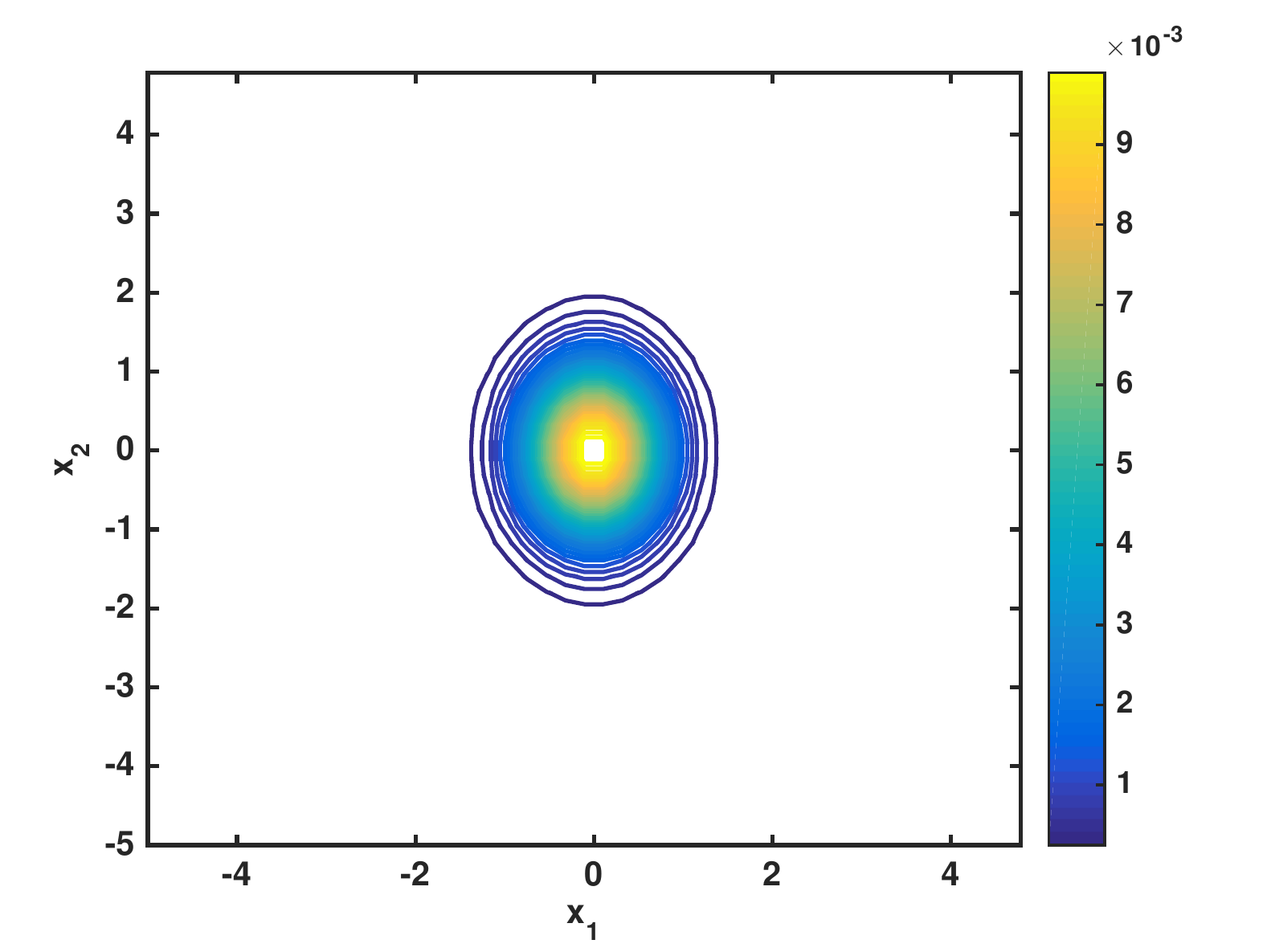}}\\
\subfigure[]{\includegraphics[scale=0.45]{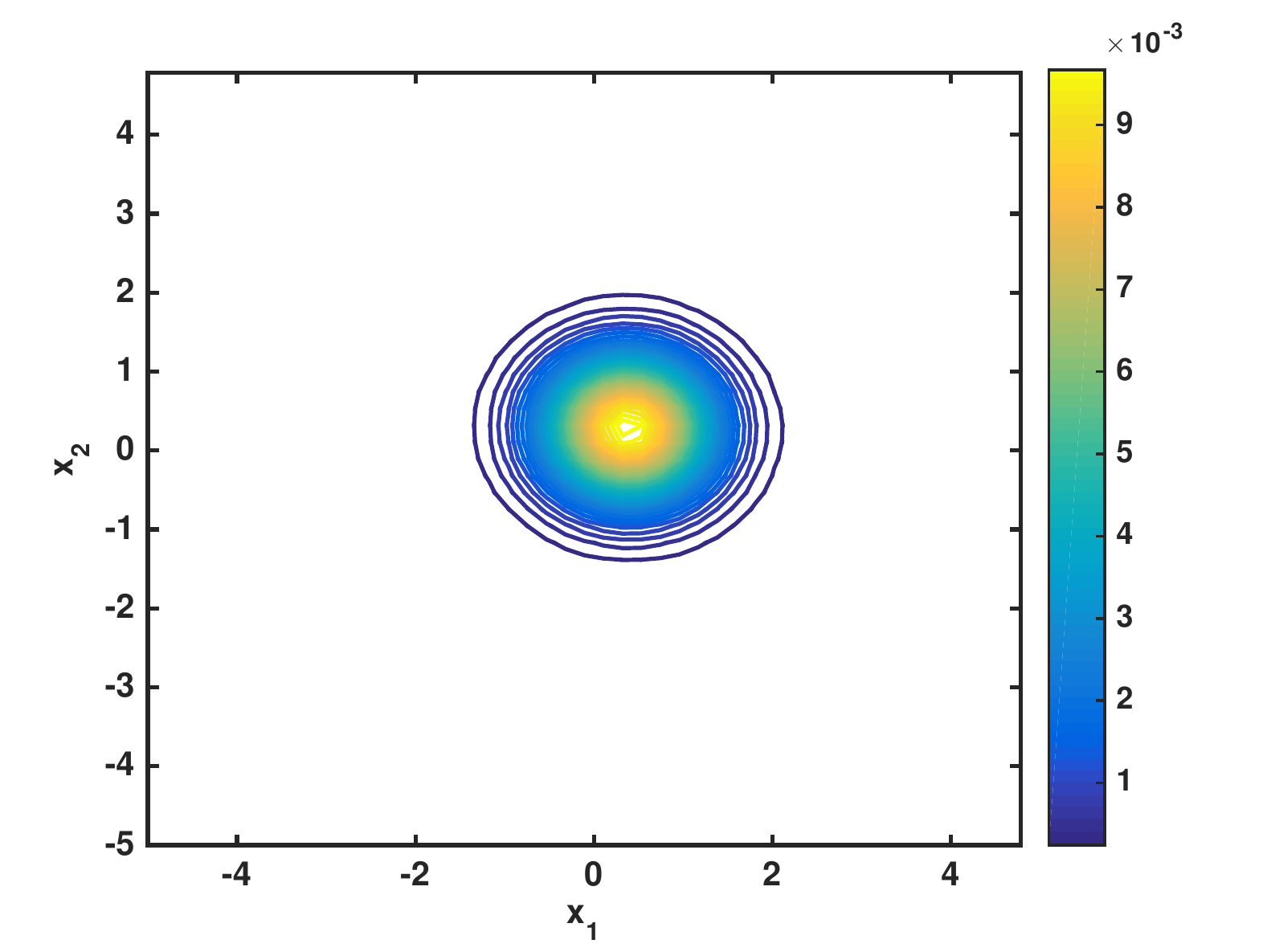}}
\subfigure[]{\includegraphics[scale=0.45]{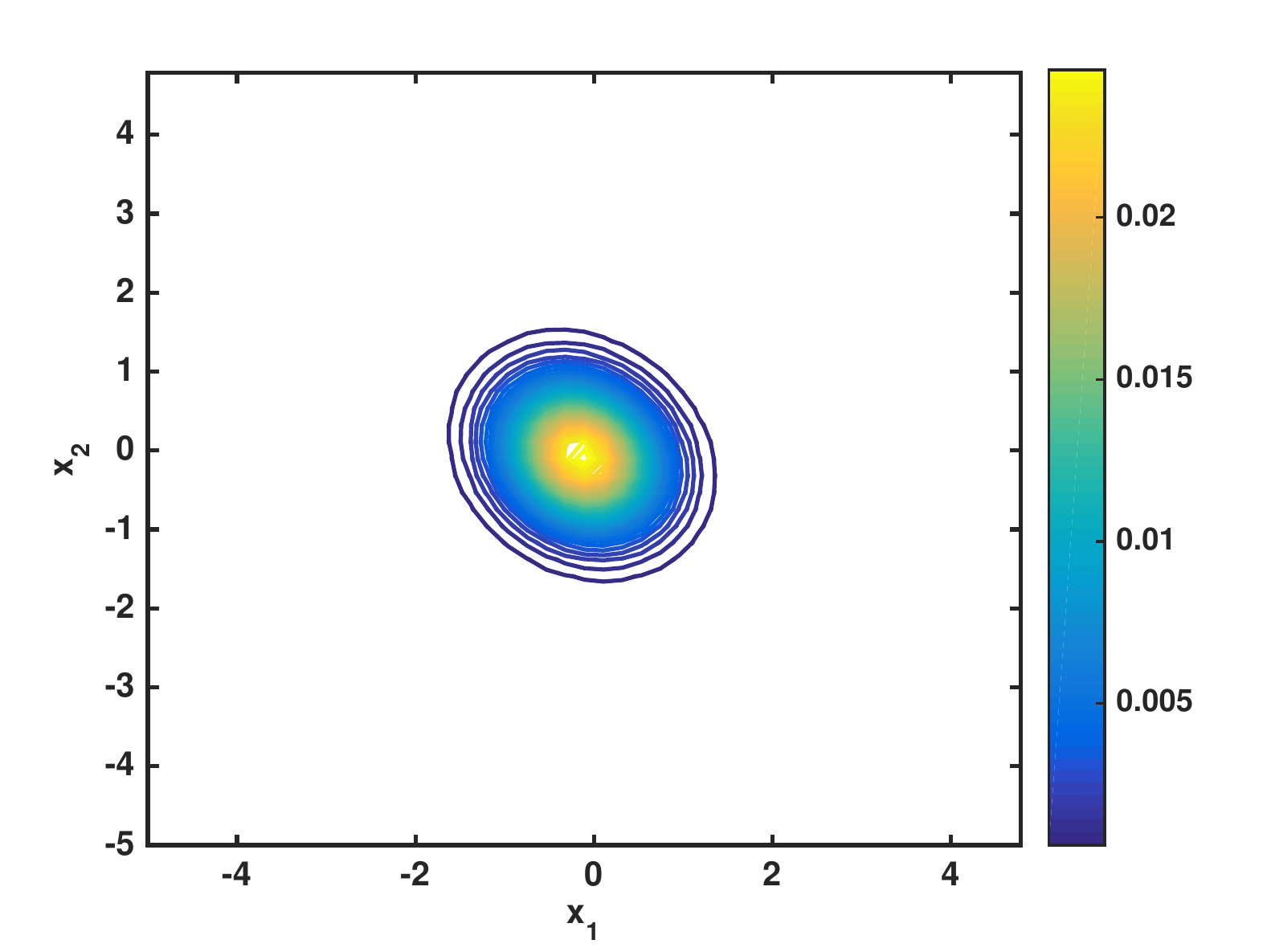}}
}
\caption{(a) Time evolution of $L^2$ error (semi-$\log_{10}$ scale of Strang; (b) Initial contour plot of $f(t=0, x_1, x_2,-0.9574,-0.9574 )$; (c) Contour plot of $f(t=30, x_1, x_2,-0.9574,-0.9574 )$ by ESR; (d) Contour plot of $f(t=30, x_1, x_2,-0.9574,-0.9574 )$  
by Strang. }\label{fig:4Dcontour}
\end{figure}

\section{Application to Fokker-Planck equations}
In this section, we are interested in Fokker-Planck type equations which can be used to describe particles system 
(in plasma physics or astrophysics). The unknown is a distribution function of particles 
$f(t, x, v)\in\mathbb{R}^+$ with the time $t\geq 0$ the space $x\in \mathbb{R}$ and velocity $v\in \mathbb{R}$.  
We will focus on two examples which contains a free transport part in $x$ and an operator (related to collisional terms) 
which only acts on the $v$ direction. The first example is the Kramer-Fokker-Planck equation (see \cite{herau2, herau, exponential} for some mathematical and numerical aspects) 
\begin{equation}
\label{kfp}
\tag{KFP}
\partial_t f + v^2 f-\partial_v^2f+v\partial_x f= 0,  \;\; f(t=0, x, v)=f_0(x, v).  
\end{equation}
The second example is the Fokker-Planck equation (see \cite{herau3, herau, exponential} for some mathematical and numerical aspects) 
\begin{equation}
\label{fp}
\tag{FP}
\partial_t f  + v\partial_x f - \partial_v^2 f - \partial_v (v f )= 0, \;\; f(t=0, x, v) = f_0(x,v). 
\end{equation}
For these two examples which enter in the class  
of inhomogeneous quadratic equations, 
exact splittings will be recalled from \cite{essiqo} and numerical results will be given.

\subsection{Presentation of the exact splittings}
For the Kramer-Fokker-Planck equation, the symbol is $p(x, v, \xi, \eta) = v^2 + \eta^2 + iv\xi$, 
and it writes $p(x, v, \xi, \eta) = iv \xi + \eta^2  - iv \eta - \frac12$ for the Fokker-Planck equation,  
where $\xi$ (resp. $\eta$) denotes the Fourier variable of $x$ (resp. $v$). We can see that for both cases, 
it is a polynomial function of degree $2$ and according to \cite{essiqo}, the solution can be split exactly 
into simple flows. More precisely, for KFP, we have the following exact splitting formula 
\begin{equation}
\label{eq:exKFP}
\forall t\geq 0, \ e^{-t(v^2-\partial_v^2+v\partial_x)} = e^{-\frac12 \tanh t \ v^2}  e^{ \nabla\cdot (A^{KFP}_t \nabla )} e^{- \tanh t \ v\partial_x} e^{-\frac12 \tanh t \ v^2},
\end{equation}
with $\nabla=(\partial_x, \partial_v)$ and where $A_t$ is the following nonnegative matrix defined by
\begin{equation}
\label{akfp}
A^{KFP}_t = \frac12 \begin{pmatrix} \frac{1}{2}\left(t  - \tanh t(1-\sinh(t)^2)\right)  & \sinh^2 t\\
\sinh^2 t & \sinh 2t
\end{pmatrix} .
\end{equation}
For FP, we have the exact splitting formula
\begin{equation}
\label{top}
e^{- t ( v\partial_x - \partial_v^2 - \partial_v v  )} = e^{t/2}  e^{- (e^t-1) v\partial_x } e^{ \nabla \cdot (A^{FP}_t \nabla)} e^{ i\alpha_t  \partial_v^2}  e^{-i \beta_t  v^2} e^{-i \beta_t \partial_v^2} e^{i \alpha_t  v^2}, 
\end{equation}
where  {$\alpha_{t} = \frac12 \sqrt{(1 - e^{-t})e^{-t}}$, $\beta_t = \frac12 \sqrt{e^t-1}$}, 
and $A^{FP}_t$ is the positive following matrix (see \cite{AB}) defined by
$$
A^{FP}_t =\frac12 \begin{pmatrix} e^{2t} + 2t + 3 -4 e^t  & -4 \sinh^2(t/2) \\
 -4 \sinh^2(t/2) & 1-e^{-2t}
\end{pmatrix}.
$$

Below, we detail a bit the link between splitting for PDE and finite dimensional Hamiltonian systems for the KFP case. 
Following \cite{essiqo}, the exact splitting \eqref{eq:exKFP} is equivalent to prove the following equality between matrices
\begin{equation}
\label{eq:exactode}
e^{-2itJQ} = e^{-2itJQ_1}e^{-2itJQ_2}e^{-2itJQ_3}e^{-2itJQ_1},
\end{equation}
where $J$ is the symplectic $4$x$4$ matrix, $Q, Q_i \in \mathrm{S}_4(\mathbb C)$ ($i=1, 2, 3$) are the matrices corresponding 
to the quadratic form $q, q_i$ ($i=1, 2, 3$)  defining the operators involving in the exact splitting \eqref{eq:exKFP}.  
Indeed, the quadratic form $q$ associated to the quadratic operator $q^w:=v^2-\partial_v^2 +v\partial_x$ is 
$q(X)=\transp{X} Q X$ 
with $X=(x, v, \xi, \eta)$ and where $Q$ is given by 
$$
Q=\begin{pmatrix} 
0 & 0 & 0 & 0 \\ 
0 & 1 & i/2 & 0 \\
0 & i/2 & 0 & 0 \\ 
0 & 0 & 0 & 1 
\end{pmatrix}. 
$$ 
Let us define the other quadratic form involved in \eqref{eq:exKFP}: 
$ q_1(X) = \frac{\tanh t}{t}v^2= \transp{X} Q_1 X, \ q_2(X) =  \transp{X} Q_2 X, \ q_3 = i\frac{\tanh t}{t}v \xi =  \transp{X} Q_3 X$ 
where 
$$
Q_1=\frac{\tanh t}{t} 
\begin{pmatrix} 
0 & 0 & 0 & 0 \\ 
0 & 1 & 0 & 0 \\
0 & 0 & 0 & 0 \\ 
0 & 0 & 0 & 0
\end{pmatrix},  
\; 
Q_2=\frac{1}{t}
\begin{pmatrix} 
{\bf 0}_2  & {\bf 0}_2  \\ 
{\bf 0}_2  & A_t^{KFP} 
\end{pmatrix}, 
\;
Q_3=\frac{\tanh t}{t} 
\begin{pmatrix} 
0 & 0 & 0 & 0 \\ 
0 & 0 & i/2 & 0 \\
0 & i/2 & 0 & 0 \\ 
0 & 0 & 0 & 0 
\end{pmatrix},  
$$ 
where ${\bf 0}_2$ is the zero $2$x$2$ matrix and $A_t^{KFP}$ is given by \eqref{akfp}.

Let us now focus on the FP case. From Theorem 2.1 in~\cite{essiqo}, we have  
$$
e^{- t ( v\partial_x - \partial_v^2 - \partial_v v  )} = e^{- t( iv \xi + \eta^2  - iv \eta - \frac12  )^w} = e^{t/2} e^{- t( \eta^2  + i(v\xi-v \eta))^w },  
$$
so that the $e^{- t( \eta^2  + i(v\xi-v \eta))^w }$ only involves homogeneous quadratic form and can be split as 
\eqref{top} and then can be checked as in the {\color{red}\ref{kfp}} case.


\subsection{Numerical results}
In this section, numerical simulations are performed using the above exact splittings to illustrate the 
exponential decay to equilibrium property and regularizing effects. Let us remark that since the  Fokker-Planck and Krammer-Fokker-Planck operators are homogeneous with respect to the space variable $x$, we do not 
have to consider localized functions in this direction and we can thus periodic functions in this direction. 
The domain is truncated to $(x, v)\in [-R_1, R_1]\times [-R_2, R_2]$ and the number of points is denoted 
by $N_1$ (resp. $N_2$) to sample the $x$-direction (resp. the $v$-direction).

\noindent{\bf Fokker-Planck equation}\\
For the FP equation, we aim at checking an important property that the solution converges 
to the equilibrium state exponentially with time (see \cite{exponential}).
The domain is taken as $R_1=\pi$ and $R_2=7$ (so that $(x, v)\in [-\pi, \pi] \times [-7,7]$) 
and the initial function is 
$$
f_0(x,v) = \frac{1}{\sqrt{2\pi}}\exp(-v^2/2) \left(1+0.5\sin(x)\cos(\frac{\pi}{7}v)\right).
$$
We will be interested in the time evolution of the entropy which is defined by 
\begin{equation}
\label{entropy}
\mathcal{F}(t) = \int_0^L \int_{-\infty}^{\infty} \frac{(f(t, x, v)-\mu(v))^2}{\mu(v)} \mathrm{d}v\mathrm{d}x,
\end{equation}
with $\mu(v) = \frac{1}{\sqrt{2 \pi}} e^{-\frac{v^2}{2}}$.

The numerical parameters are $N_1=27$, $N_2=181$ and the time step is 
$\Delta t = 0.1$ whereas the simulation is ended at $t =20$. In Figure \ref{fig:FP}, the distribution function 
is plotted at the initial and the ending time and we can observe the relaxation towards the Maxwellian profile. 
This is more quantitatively shown in Figure \ref{fig:FP}-(c) where the time history of entropy \eqref{entropy} is 
plotted (semi-$\log_{10}$ scale). Indeed, the exponential decay is clearly observed, the rate of which 
is equal to $-1.99$ (red straight line) which is good agreement with \cite{exponential}.
\begin{figure}[htbp]
\center{
\subfigure[]{\includegraphics[scale=0.45]{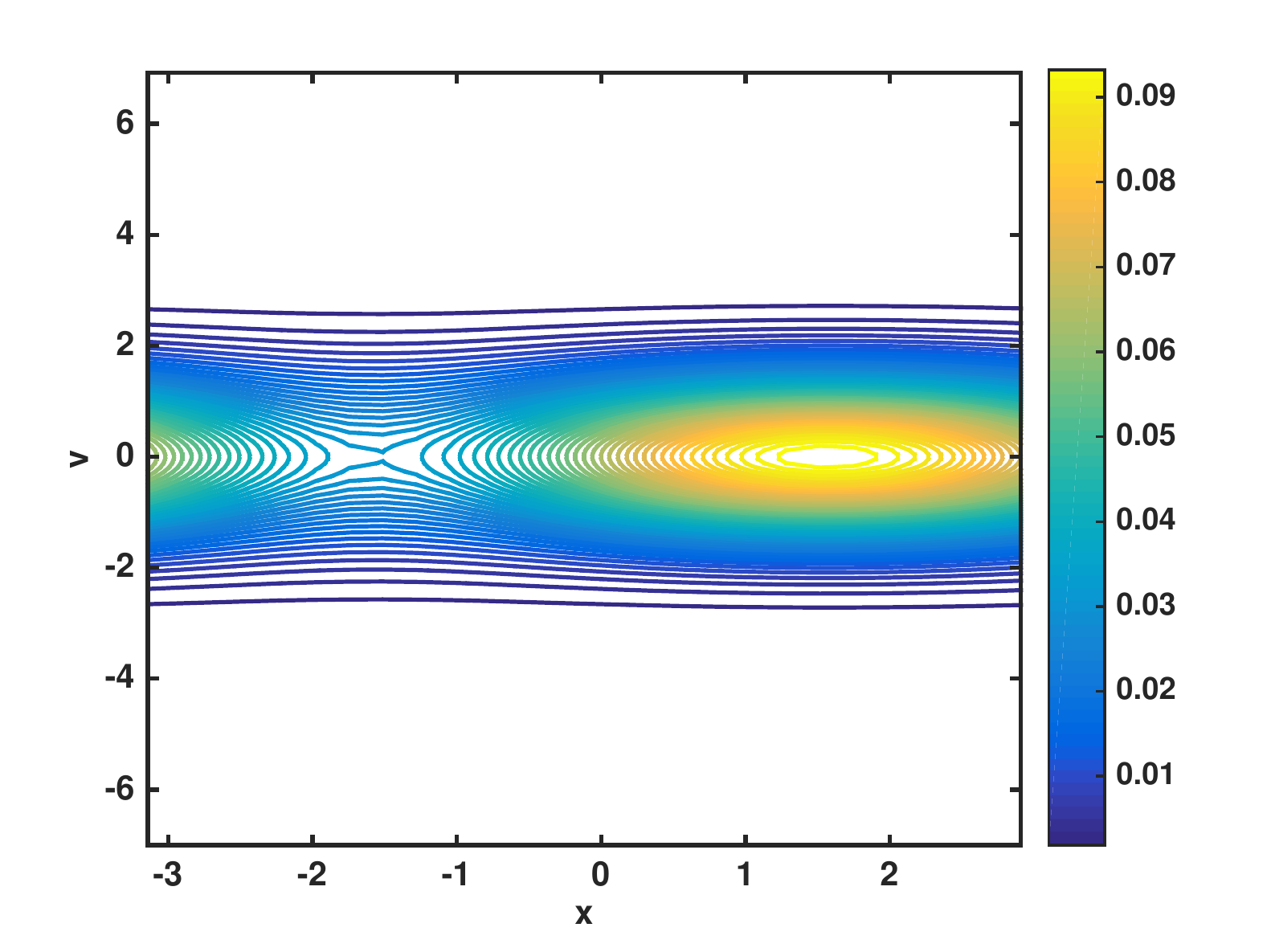}}
\subfigure[]{\includegraphics[scale=0.45]{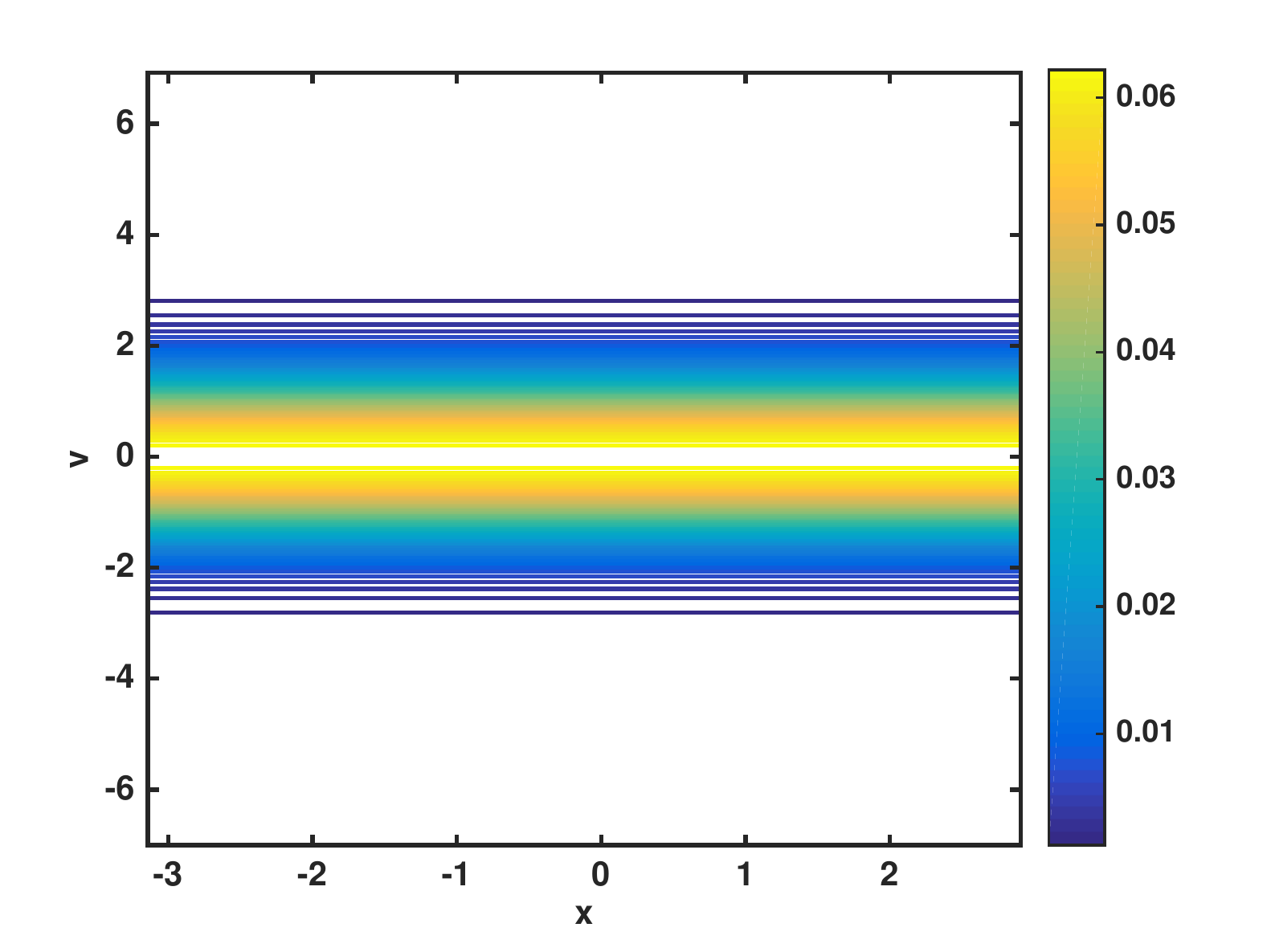}}\\
\subfigure[]{\includegraphics[scale=0.45]{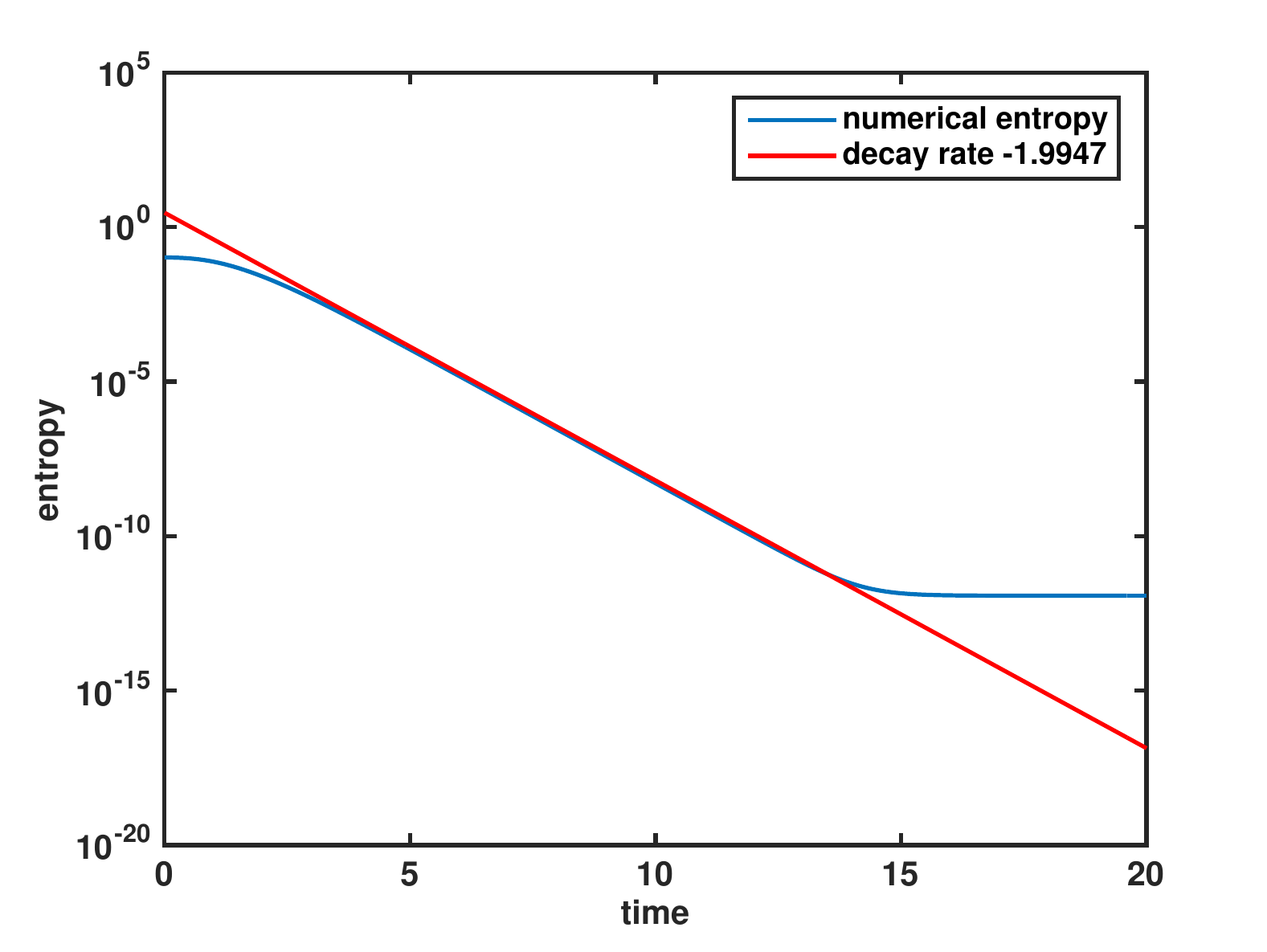}}
}
\caption{(a) Initial distribution function $f(t=0, x, v)$; (b) distribution function at $t=20$: $f(t=20, x, v)$; 
(c) time evolution of the entropy given by \eqref{entropy}.}
\label{fig:FP}
\end{figure}

\noindent{\bf Kramer-Fokker-Planck equation}\\
Now, we are interested in the numerical simulation of the {\color{red}\ref{kfp}} equation. 
The domain is chosen with $R_1=4$ and $R_2=15$ (so that $(x, v)\in [-4, 4] \times [-15, 15]$).  
In the following experiments, we have considered $N_1=N_2=199$. 

First, in Figure \ref{fig:Joackim_error}, we plot the $L^2$ error  in $x$ and $v$ (in $\log_{10}-\log_{10}$ scale) 
of the following formula  
\begin{equation}\label{eq:Joackim_error}
e^{- \Delta t(v^2-\partial_v^2+v\partial_x)}f_0(x, v)-e^{- \Delta t/2(v^2-\partial_v^2+v\partial_x) }e^{- \Delta t/2(v^2-\partial_v^2+v\partial_x) }f_0(x, v)
\end{equation}
using different time steps and where the initial data is smooth enough (a Maxwellian is considered here). 
Two different methods are used to compute the quantity $e^{- t(v^2-\partial_v^2+v\partial_x) }f_0(x, v)$: 
the exact splitting  \eqref{eq:exKFP} and a Strang operator splitting. 
First, we observe that the exact splitting gives an error at the machine precision level  
whereas we obtain an error of ${\cal O}(\Delta t^3)$ which corresponds the local error of the Strang method. 

Then, our goal is to illustrate a result from \cite{AB} in which the authors proved that the evolution operator of KFP 
has regularizing effects. To do so,  the initial condition is chosen as random values (discrete $L^1$ norm is 1) 
 and the step size is $\Delta t = 0.1$.  
In Figure \ref{fig:KFP}, the distribution function is plotted for different times: $t=0, 0.2, 1, 100$. 
We observe that starting from a random initial value, the numerical solution becomes smoother and smoother as 
time increases. Moreover, it can be proved that the solution is exponentially decreasing in time towards zero. 
This is illustrated in Figure \ref{fig:KFP_log} where we plot the time history $L^2$ norm (in $x$ and $v$) of $f$ in semi-$\log_{10}$ 
scale. 
\begin{figure}[htbp]
\center{
\subfigure[]{\includegraphics[scale=0.45]{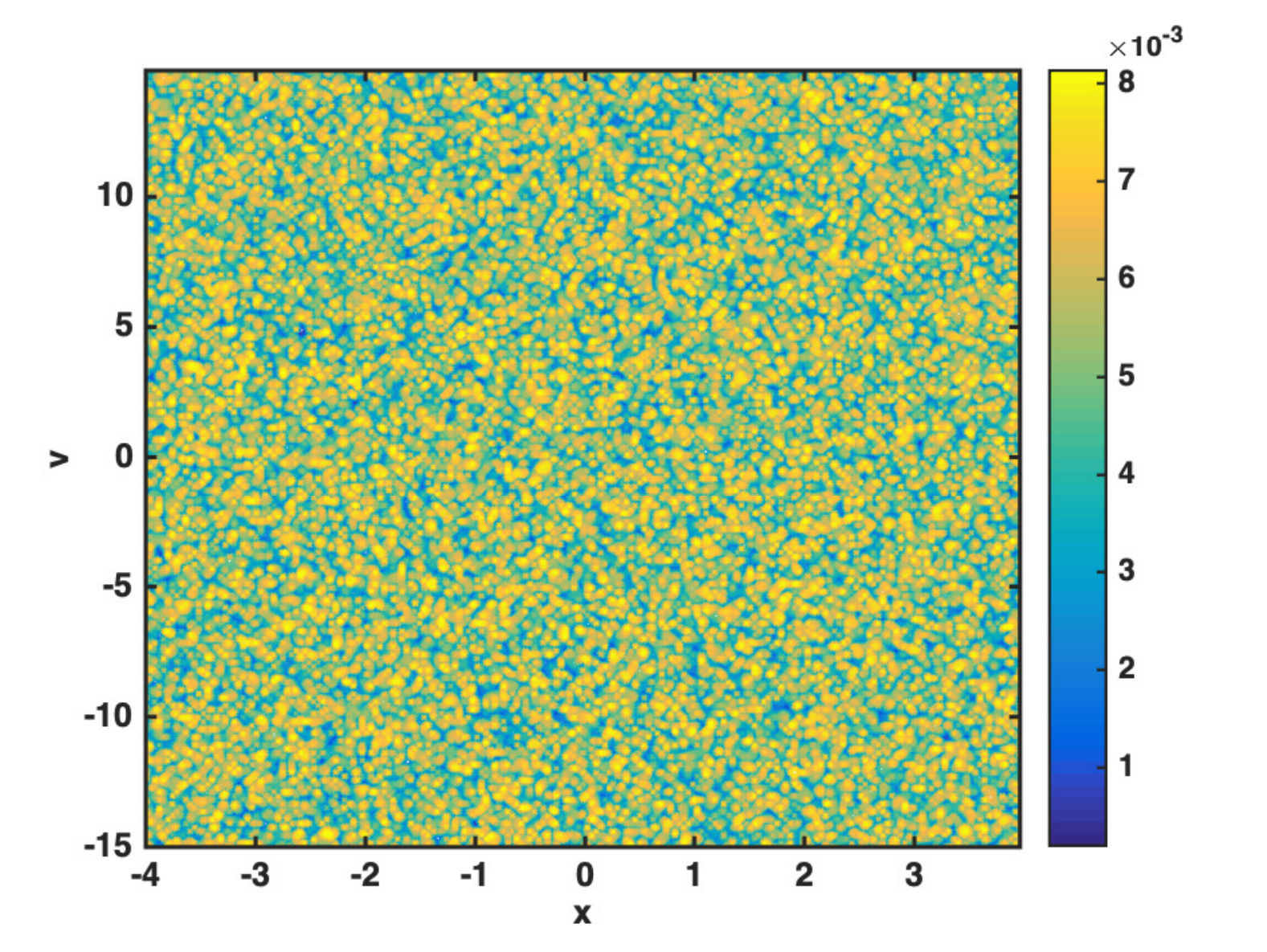}}
\subfigure[]{\includegraphics[scale=0.45]{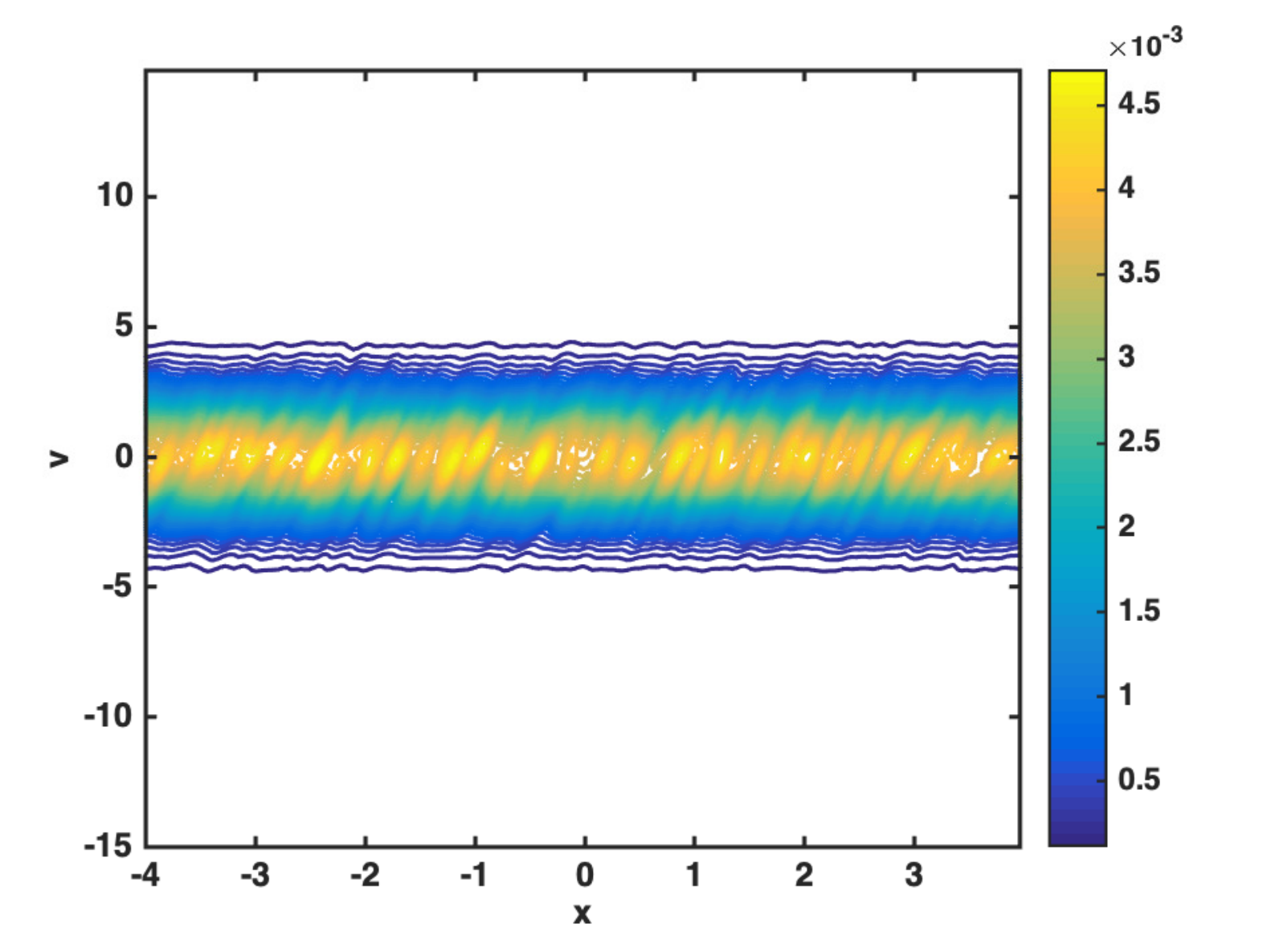}}\\
\subfigure[]{\includegraphics[scale=0.45]{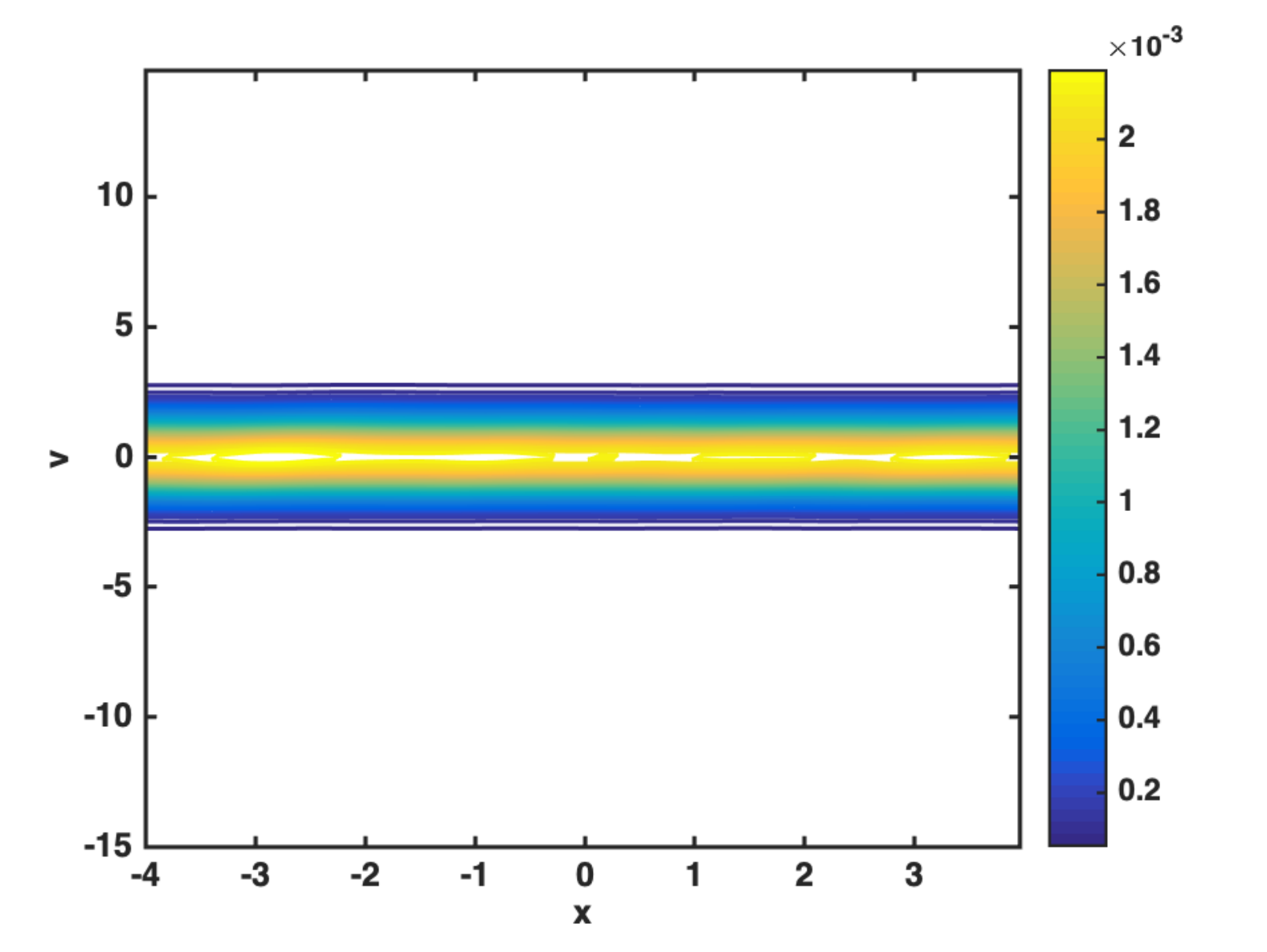}}
\subfigure[]{\includegraphics[scale=0.45]{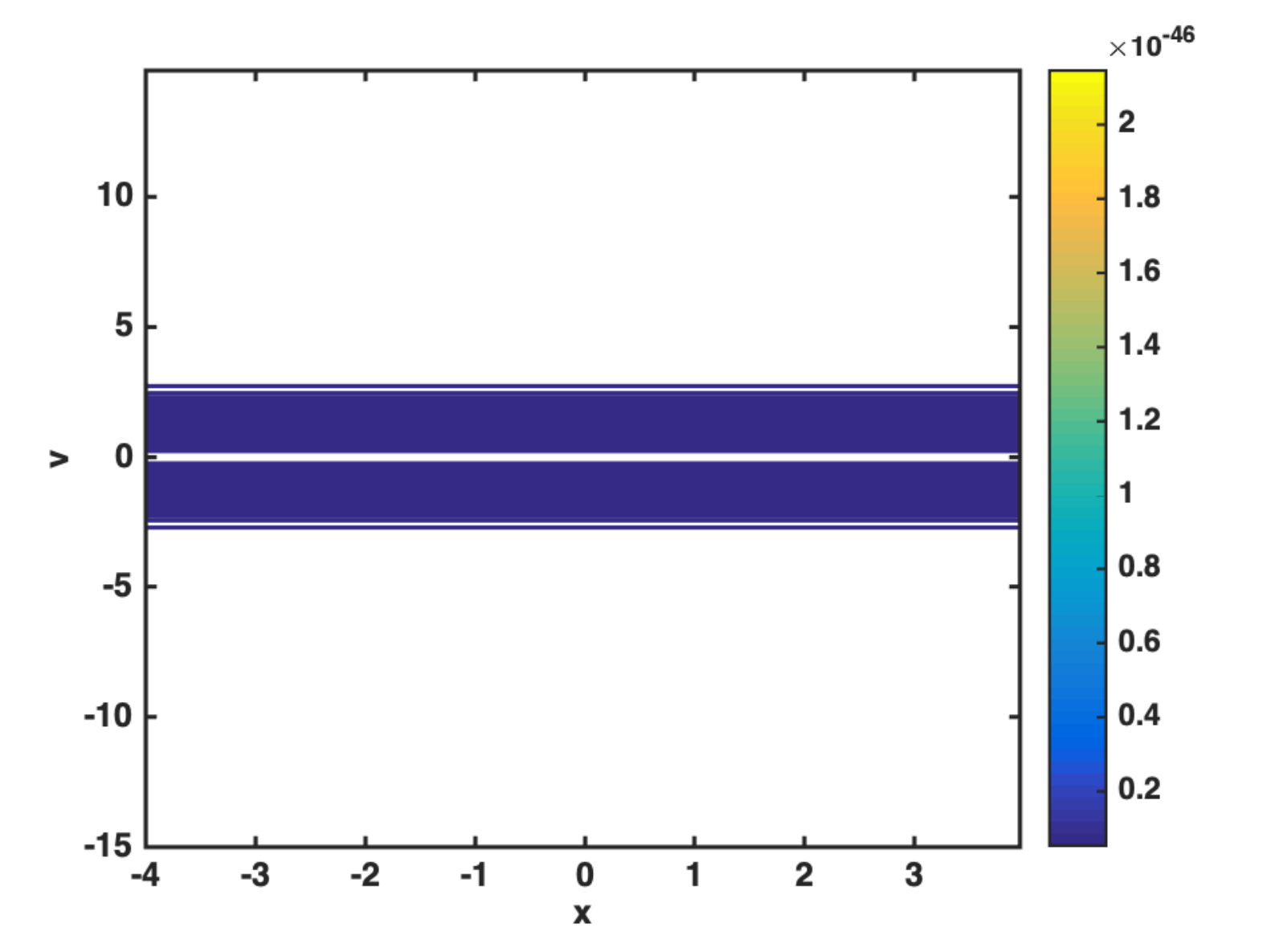}}
}
\caption{Time evolution of the distribution function $f$ (a) $t = 0$; (b) $t = 0.2$; (c) $t = 1$; (d) $t = 100$.}\label{fig:KFP}
\end{figure}
\begin{figure}[htbp]
\center{
{\includegraphics[scale=0.45]{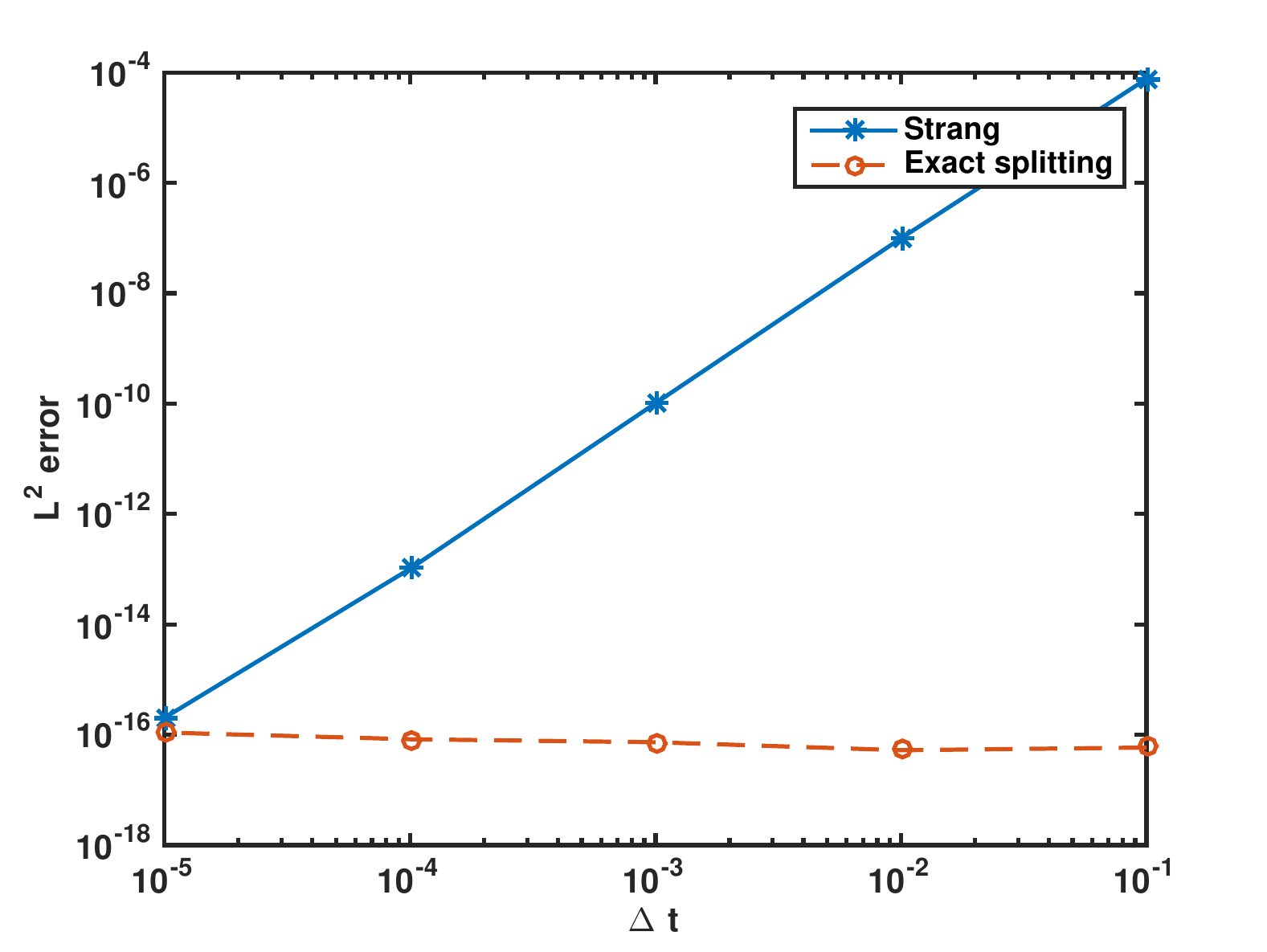}}
}
\caption{$L^2$ error ($\log_{10}-\log_{10}$ scale) of the formula \eqref{eq:Joackim_error} with different time step size $\Delta t$. } 
\label{fig:Joackim_error}
\end{figure}

\begin{figure}[htbp]
\center{
\includegraphics[scale=0.45]{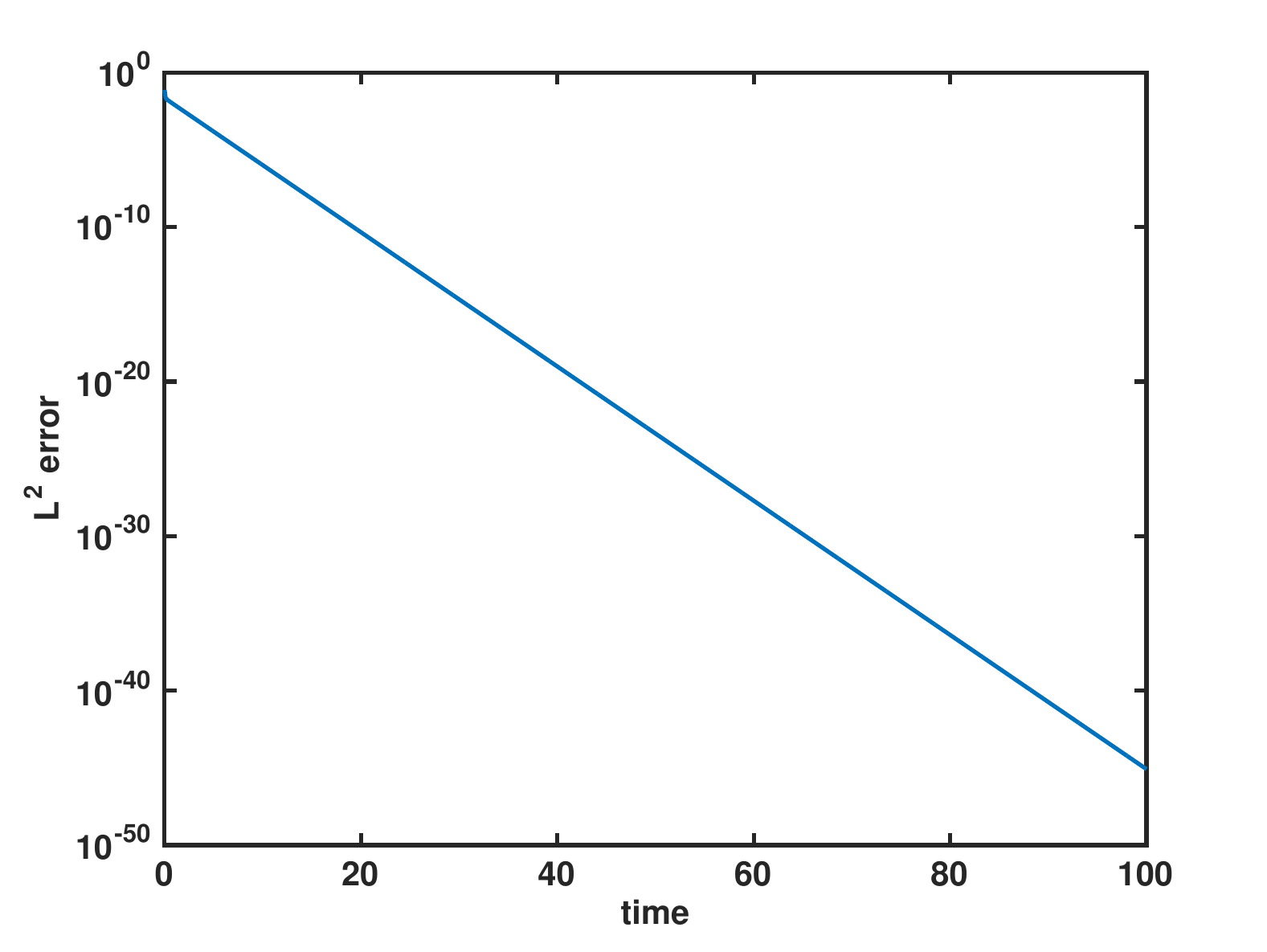}
}
\caption{Time evolution of the $L^2$ norm (in $x$ and $v$) of $f$ in semi-$\log_{10}$ scale.}
\label{fig:KFP_log}
\end{figure}

\section{Application to Schr\"odinger equations}
In this section, we consider Schr\"odinger type equations. First, the Schr\"odinger in the presence of 
an external electromagnetic field is an important model in computational quantum mechanics. 
The second model is the Gross-Pitaevskii  equation with angular momentum rotation which is widely used 
to describe Bose Einstein condensate at low temperature. We refer to  \cite{raymond, Ostermann, Xavier2, Bao, wang} 
for more details on these models. 

Then, we consider the following linear Schr\"odinger equation 
(with a rotation term and a quadratic external potential)  of unknown $\psi({\bf x}, t)\in \mathbb{C}$ 
with ${\mathbf x}\in \mathbb{R}^n, \ t \in \mathbb{R}_{+}$ 
\begin{equation}
\label{eq_QMS}\tag{QM}
\begin{aligned}
&i \frac{\partial \psi({\bf x}, t)}{\partial t} = -\frac{1}{2}\Delta \psi({\bf x},t) - i ({ B}  {\bf x})\cdot \nabla \psi({\bf x},t) + V({\bf x})\psi({\bf x},t),  \;\;  \psi({\mathbf x}, t=0) = \psi_0({\mathbf x}),
\end{aligned}
\end{equation}
where $n\in\mathbb{N}^*$, $B \in A_n(\mathbb{R})$ is a skew symmetric matrix of 
size $n$ and $V : \mathbb{R}^n \to \mathbb{R}$ is a quadratic potential. 
According to the previous framework, this model is an inhomogeneous quadratic PDEs 
since it can be represented by the following symbol 
\begin{equation}
\label{quad_pde}
p(X) = i\frac{|\boldsymbol{\xi}|^2}2   + iB {\mathbf{x}} \cdot {\boldsymbol \xi} + iV({\mathbf{x}}), \;\; X=({\mathbf{x}},\boldsymbol{\xi})\in \mathbb{R}^{2n}. 
\end{equation}
In the sequel, an exact splitting is presented for which the construction will be detailed. 
Then, an extension to nonlinear and non quadratic Schr\"odinger equations are discussed.  
This section will be ended by several numerical results that will be compared to different strategies from the literature 
to illustrate the efficiency of our approach. 

\subsection{Presentation of the exact splitting}
We present an exact splitting method for \eqref{eq_QMS} which has been introduced in~\cite{essiqo}. 

\begin{theorem} 
\label{thm_splt_MS2} 
There exists some quadratic forms $v_t^{(r)}, a_t$ on $\mathbb{R}^{n}$, a strictly upper triangular matrix $U_t \in M_n(\mathbb{R})$,  a strictly lower triangular matrix $L_t \in M_n(\mathbb{R})$ and a diagonal quadratic form $v_t^{(\ell)}$ on $\mathbb{R}^{n}$, all depending analytically on $t\in (-t_0,t_0)$ for some $t_0>0$, such that for all $t\in (-t_0,t_0)$ we have
\begin{equation}
\label{eq:MS2}
e^{ i t ( \Delta/2 - V({\mathbf x})) - t B {\mathbf x}\cdot \nabla } =e^{-i t v^{(\ell)}_t({\mathbf x})}  \left( \prod_{j=1}^{n-1} e^{- t (U_t {\mathbf x})_j \partial_{x_j}} \right) e^{i t a_t(\nabla)} \left( \prod_{j=2}^{n} e^{- t (L_t {\mathbf x})_j \partial_{x_j}} \right) e^{-i t v^{(r)}_t({\mathbf x})} 
\end{equation}
where $a_t(\nabla)$ denotes the Fourier multiplier of symbol $-a_t({\boldsymbol \xi})$ and $(U_t {\mathbf x})_j$ (resp. $(L_t {\mathbf x})_j$) the $j^{\mathrm{st}}$ coordinate of $U_t {\mathbf x}$ (resp. $L_t {\mathbf x}$).
\end{theorem}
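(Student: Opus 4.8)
The plan is to turn the operator identity \eqref{eq:MS2} into a finite-dimensional identity between symplectic matrices, and then to establish that identity by a semi-explicit recursive factorization, invoking the analytic implicit function theorem only at one scalar step.

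First I would pass to $Sp(2n,\mathbb R)$ via the Weyl/metaplectic correspondence for quadratic symbols underlying \cite{essiqo} (see also \cite{Lars}). Since the symbol \eqref{quad_pde} is $i$ times a \emph{real} quadratic form with no linear part, the operator $e^{it(\Delta/2-V({\mathbf x}))-tB{\mathbf x}\cdot\nabla}$ is, for $t$ near $0$, the metaplectic operator attached to the linear Hamiltonian flow $S_t=e^{tF}\in Sp(2n,\mathbb R)$, where $F$ is the $2n\times2n$ matrix whose $n\times n$ blocks are $B,\,I$ on the top row and $-\nabla^2V,\,-\transp{B}$ on the bottom row (up to the global sign convention). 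The structural fact I will use repeatedly is that, \emph{because $B$ is skew-symmetric}, the top-left and bottom-right $n\times n$ blocks of $S_t$ are $I\pm tB+O(t^2)$ and hence have diagonal $1+O(t^2)$. Each elementary factor in \eqref{eq:MS2} is likewise metaplectic with an explicit symplectic matrix: a chirp $e^{-itv({\mathbf x})}$ corresponds to the lower unitriangular symplectic matrix with off-diagonal block $-t\,\mathrm{Hess}\,v$; a Fourier multiplier $e^{ita(\nabla)}$ to the upper unitriangular one with block $\pm t\,\mathrm{Hess}\,a$; and a single-coordinate shear $e^{-t(M{\mathbf x})_j\partial_{x_j}}$, with $M$ having only its $j$-th row supported strictly right of the diagonal, to $\mathrm{diag}(e^{-tM},\transp{(e^{-tM})}^{-1})$. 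Consequently $\prod_{j=1}^{n-1}e^{-t(U_t{\mathbf x})_j\partial_{x_j}}$ corresponds to $\mathrm{diag}(P_t,\transp{P_t}^{-1})$ with $P_t$ unit upper triangular, $\prod_{j=2}^{n}e^{-t(L_t{\mathbf x})_j\partial_{x_j}}$ to $\mathrm{diag}(P'_t,\transp{P'_t}^{-1})$ with $P'_t$ unit lower triangular, and — by the analytic inverse function theorem (after the customary division by $t$) — these ordered products parametrize \emph{all} unit upper (resp.\ lower) triangular matrices near the identity, analytically in $t$. Since every operator involved equals the identity at $t=0$ and depends analytically on $t$, the metaplectic phases are forced; hence \eqref{eq:MS2} is equivalent to producing analytic-in-$t$ data — a diagonal $\lambda_t$, unit upper/lower triangular $P_t,P'_t$, symmetric $g_t,\rho_t$ — such that $S_t$ is the product, in the order chirp, $U$-shears, Fourier multiplier, $L$-shears, chirp, of the five corresponding symplectic matrices.

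Expanding that product into $2\times2$ blocks gives four block equations, which I would solve recursively. The $(2,2)$ equation reads $\transp{(P_tP'_t)}^{-1}=D_t+\lambda_t B_t$ (with $B_t,D_t$ the matching blocks of $S_t$); since the left side is the inverse transpose of a product of a unit upper and a unit lower triangular matrix, this is solvable exactly when a certain explicit set of $n$ principal minors of $D_t+\lambda_t B_t$ all equal one. These are $n$ analytic conditions on the $n$ entries of $\lambda_t$; each vanishes identically at $t=0$, so after dividing by $t$ one gets a system whose linearization at $(t,\lambda)=(0,0)$ is — precisely because the diagonal of $D_t$ is $1+O(t^2)$, i.e.\ because $B$ is skew-symmetric — an invertible triangular linear map; the analytic implicit function theorem then yields a unique analytic $\lambda_t$ with $\lambda_0=0$, hence $\lambda_t=O(t)$, so $v^{(\ell)}_t$ is a genuine analytic family of \emph{diagonal} quadratic forms. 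With $\lambda_t$ fixed, Gauss elimination factors $\transp{(D_t+\lambda_t B_t)}^{-1}$ analytically as $P_tP'_t$, producing the triangular matrices and hence $U_t,L_t$; the $(1,2)$ equation then defines $g_t$ by an explicit formula which is symmetric and $O(t)$ by the symplectic relations for $S_t$ (and $B_t=\pm tI+O(t^2)$), so $a_t$ extends analytically at $t=0$; the $(1,1)$ equation defines $\rho_t=B_t^{-1}(P_tP'_t-A_t)$, which is symmetric and extends analytically ($P_tP'_t-A_t=O(t)$), giving $v^{(r)}_t$; finally the $(2,1)$ equation holds automatically, since two symplectic matrices agreeing on three of their four blocks, one of which ($B_t$) is invertible, agree on the fourth.

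The step I expect to be the main obstacle is the coupling revealed by the $(2,2)$ equation: the diagonal chirp $v^{(\ell)}_t$ is \emph{exactly} the freedom needed to make the middle factor $P_tP'_t$ realizable as (unit upper)$\times$(unit lower) triangular, and one must show both that it can be arranged (the analytic implicit function theorem) and that the required $\lambda_t$ does not blow up as $t\to0$ — the latter being where the skew-symmetry of $B$ is indispensable, as it forces the relevant diagonal of $S_t$ to be $1+O(t^2)$ and hence $\lambda_t=O(t)$. Two further points are routine but must be recorded: the lemma that the ordered products of coordinate shears parametrize the unitriangular groups near the identity, and the symmetry of $g_t,\rho_t$ together with the automatic validity of the last block equation, both of which reduce to bookkeeping with the relations $\transp{S_t}JS_t=J$.
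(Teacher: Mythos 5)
Your proposal is correct, and it reaches the theorem by a route that is genuinely different from the one this paper relies on (the paper itself only sketches the argument and defers to \cite{essiqo}). The first reduction is common to both: via the H\"ormander/metaplectic correspondence the operator identity \eqref{eq:MS2} is equivalent to the factorization \eqref{eq:MS2ODE} of the symplectic matrix $S_t=e^{tF}$ into the five elementary symplectic blocks, with the residual sign fixed by continuity from $t=0$. The divergence is in how that matrix identity is established. The paper's proof applies the implicit function theorem to the whole parameter set $(V^{(\ell)},U,A,L,V^{(r)})$ at once, and what it records in the text is the resulting fixed-point iteration involving $\log(P_{t,k})$. You instead solve the four block equations by explicit elimination: the $(2,2)$ block isolates the diagonal chirp as the unique solution of the $n$ trailing-principal-minor conditions needed for a (unit upper)$\times$(unit lower) factorization of $D_t+\lambda_t B_t$, and only there do you invoke the analytic implicit function theorem, on a system whose linearization $\lambda\mapsto(\sum_{j\geq k}\lambda_{jj})_{k}$ is triangular and invertible; the remaining data $U_t,L_t,a_t,v_t^{(r)}$ then come from closed formulas, with the symmetry of $a_t,v_t^{(r)}$ and the redundancy of the $(2,1)$ block following from $\transp{S_t}JS_t=J$ (indeed $B_t\transp{(P_tP_t')}=P_tP_t'\transp{B_t}$ reduces exactly to $\transp{D_t}B_t=\transp{B_t}D_t$). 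Your route buys a semi-explicit construction — everything except the $n$ diagonal entries of $v_t^{(\ell)}$ is finite linear algebra once those are found — and it pinpoints where the skew-symmetry of $B$ is used: it forces the diagonal entries of $D_t$ to be $1+O(t^2)$ and $P_tP_t'-A_t=O(t^2)$, hence $\lambda_t=O(t)$ and $v_t^{(\ell)},v_t^{(r)}$ analytic through $t=0$. The paper's global route buys uniformity across the whole family of exact splittings in \cite{essiqo} and a geometrically convergent iteration that is directly implementable. The two lemmas you flag as routine do close as you expect: the ordered products of single-row shears parametrize the unitriangular groups globally (peel off the rows from the bottom up), and the symmetry checks are exactly the symplectic block relations.
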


Let us detail the steps of this splitting to emphasize the fact that, due the triangular structure of the matrices $L_t$ and $U_t$,    only $2n$  FFT calls are required. 
\begin{itemize}
\item From $e^{-i t v^{(r)}_t({\mathbf x})} $ to $e^{- t (L_t {\mathbf x})_n \partial_{x_n}}$, we need a FFT in $x_n$ direction;
\item From $e^{- t (L_t {\mathbf x})_j \partial_{x_j}}$ to $e^{- t (L_t {\mathbf x})_{j-1} \partial_{x_{j-1}}}, \ j \in \llbracket 3, n\rrbracket$, as $L_t$ is a strictly lower triangular matrix, $(L_t {\mathbf x})_j$ only depends on $x_i, \, i \in \llbracket 1, j-1 \rrbracket$, then we only need a FFT in $x_{j-1}$ direction. 
\item From $e^{- t (L_t {\mathbf x})_2 \partial_{x_2}}$ to $e^{i t a_t(\nabla)}$, we need a FFT in $x_{1}$ direction. 
\item From $e^{i t a_t(\nabla)}$ to $e^{- t (U_t {\mathbf x})_{n-1} \partial_{x_{n-1}}}$, we need an inverse FFT in $x_{n}$ direction;
\item  From $e^{- t (U_t {\mathbf x})_j \partial_{x_j}}$ to $e^{- t (U_t {\mathbf x})_{j-1} \partial_{x_{j-1}}}, \ j \in \llbracket 2, n-1\rrbracket$, because $U_t$ is a strictly upper triangular matrix, $(U_t {\mathbf x})_{j-1}$ only depends on $x_i, \, i \in \llbracket j, n \rrbracket$, we only need an inverse FFT in $x_{j}$ direction. 
\item From $e^{- t (U_t {\mathbf x})_{1} \partial_{x_{1}}}$ to $e^{-i t v^{(\ell)}_t({\mathbf x})} $, we need an inverse FFT in $x_{1}$ direction.
\end{itemize}
To sum up, this new method only needs $2n$ FFT (or  inverse FFT) calls, 

Below, we detail a bit the link between splitting for the Schr\"odinger equations 
and finite dimensional Hamiltonian systems.  
Following \cite{essiqo}, the exact splitting \eqref{eq:MS2} is equivalent to prove an equality at the level of matrices. 
Indeed, from H\"ormander \cite{Lars}, there exists a morphism between the Hamiltonian flow of the following linear 
ODE $\dot X = -iJ \nabla p(X)$ and $e^{-p^w}$ (up to one sign), where 
$J$  is the symplectic $2n$ matrix. 
So we can check the following exact splitting at the linear ODE level for (\ref{eq:MS2}): 
\begin{equation}\label{eq:MS2ODE}
e^{-2itJ Q} = e^{-2itJ V^{(\ell)}} \left( { \prod_{j=1}^{n-1}} e^{-2itJ U^{(j)}} \right) e^{-2itJ A}  \left( {\prod_{j=2}^n} e^{-2itJ L^{(j)}} \right) 
e^{-2itJ V^{(r)}}, 
\end{equation}
where  $Q, V^{(\ell)}, U^{(j)}, L^{(j)}, V^{(r)}$ are symmetric matrices of the quadratic forms (symbols) of the following 
operators $iv^{(\ell)}_t({\mathbf x}), i(U_t{\mathbf x})_j \partial_{x_j}, -iA_t(\nabla), i(L_t{\mathbf x})_j \partial_{x_j}, iv^{(r)}_t({\mathbf x})$ respectively.

\subsection{Practical construction of the splittings}
In this subsection, the iteration methods giving the coefficients of two above exact splittings are given. 
The proof for Theorem \ref{thm_splt_MS2} is based on the implicit function theorem 
which gives a practical way to construct the exact splitting. Indeed, it furnishes an iteration method which is 
presented below. We refer to \cite{essiqo} for more details.

In this context, the iterative method giving the coefficients of the exact splitting can be made much more explicit. Indeed, identifying a quadratic form with its symmetric matrix in order we define, if $t$ is small enough, we define, by induction, the following sequences
$$
\left\{\begin{array}{lll} A_{t,k+1} &=& A_{t,k} + I_n/2 - \widetilde{A}_{t,k} \\
L_{t,k+1} &=& L_{t,k} + L - \widetilde{L}_{t,k}\\
				U_{t,k+1} &=& U_{t,k} + U - \widetilde{U}_{t,k} \\
				V_{t,k+1}^{(m)} &=& V_{t,k}^{(m)} + V - \widetilde{V}_{t,k}^{(m)} + \frac{t}2 [D_{t,k},B ]  + \frac{t^2}2 D_{t,k}^2
\end{array}  \right.
$$
where $(A_{t,0},L_{t,0}+U_{t,0},V_{t,0}^{(m)}) = (I_n/2,B,V)$, $L+U=B$ and
$$
\begin{pmatrix} 2 \widetilde{V}_{t,k}^{(m)}  & \transp{\widetilde{L}_{t,k}}+\transp{\widetilde{U}_{t,k}} + t D_{t,k}\\
				\widetilde{L}_{t,k}+\widetilde{U}_{t,k} + t D_{t,k} & 2\widetilde{A}_{t,k}	 \\
\end{pmatrix} = -t^{-1} J \log(P_{t,k})
$$
and
\begin{multline*}
P_{t,k} =  \left[ \prod_{j=1}^{n-1} \begin{pmatrix} I_n + t U_{t,k}^{(j)}&   \\ & I_n - t \transp{ U_{t,k}^{(j)}}  \end{pmatrix}\right]  \begin{pmatrix} I_n &  2 t A_{t,k} \\  & I_n \end{pmatrix} 
\left[ \prod_{j=2}^n \begin{pmatrix} I_n + t L_{t,k}^{(j)}&   \\ & I_n - t \transp{ L_{t,k}^{(j)}}  \end{pmatrix}\right] \\ \times
  \begin{pmatrix} I_n &   \\  -2 t V_{t,k}^{(m)}  & I_n \end{pmatrix}
\end{multline*}
with $L_{t,k}^{(j)} = (e_j \otimes e_j)L_{t,k}$, $U_{t,k}^{(j)} = (e_j \otimes e_j)U_{t,k}$ and $(e_1,\dots,e_n)$ the canonical basis of $\mathbb{R}^n$.

As previously, one can prove that the sequence 
$(\widetilde{A}_{t,k}, \widetilde{L}_{t,k}, \widetilde{U}_{t,k}, -\frac{t}{2} D_{t, k}, V_{t,k}^{(m)} + \frac12D_{t,k})$ 
generated by this induction converges towards the elements which define the splitting in \ref{thm_splt_MS2}, i.e.  
$$
|A_t - \widetilde{A}_{t,k} | + |L_t - \widetilde{L}_{t,k} | + |U_t - \widetilde{U}_{t,k} | + |V_t^{(\ell)} + \frac12 D_{t,k} | + |V_t^{(r)} - V_{t,k}^{(m)} -  \frac12D_{t,k} |  \leq \left(\frac{t}{\tau_0} \right)^k.
$$
as soon as $t$ is small enough ($0<|t|<\tau_0$ for a given $\tau_0>0$). 

\subsection{Extension to more general Schr\"odinger equations} 
Before presenting some numerical results, some time discretizations based on the 
previous exact splitting are proposed here in order to tackle more general Schr\"odinger equations.  
Keeping the same notations $\psi({\bf x}, t)\in \mathbb{C}$ for the unknown (${\bf x} \in \mathbb{R}^n$ and $t\geq 0$), 
we then consider the following Schr\"odinger equation to illustrate our strategy 
\begin{equation}
\label{eq:sch}
\begin{aligned}
&i \frac{\partial \psi({\bf x}, t)}{\partial t} = -\frac{1}{2}\Delta \psi({\bf x},t) - i ({ B}  {\bf x})\cdot \nabla \psi({\bf x},t) + V({\bf x})\psi({\bf x},t) + f({\bf x}, |\psi|^2) \psi({\bf x},t),   
\end{aligned}
\end{equation}
where $f$ is a real valued function, ${B} \in A_n(\mathbb{R})$ is a skew symmetric matrix of size $n$, 
and $V({\bf x}) :\mathbb{R}^n\to \mathbb{R}$ is a real valued quadratic potential. 
Some well known examples can be given in the case $n=2, 3$ 
\begin{itemize}
\item $f({\bf x}, |\psi|^2) = \beta |\psi|^2$ ($\beta \in \mathbb{R}$) and $({ B{\mathbf x}}) \cdot \nabla \psi = \Omega(x_2 \partial_{x_1} - x_1\partial_{x_2})\psi (\Omega \in \mathbb{R})$. In this case, (\ref{eq:sch}) is the so-called Gross--Pitaevskii equation (GPE) with an angular momentum rotation term (see \cite{Bao, wang}).  
\item $f({\bf x}, |\psi|^2)=V_{nq}({\bf x})$ where $V_{nq}({\bf x})$ denotes a non-quadratic potential and $V({\bf x}) = \frac{1}{2}| { B{\mathbf x}}|^2$. In this case, (\ref{eq:sch}) is the so-called magnetic Schr\"odinger equation (see \cite{Jin, Ostermann}). 
\end{itemize}
On can show that (\ref{eq:sch}) has the following two conserved quantities,
\begin{equation}
\begin{aligned}\label{eq:invariant}
&\text{(mass)} \quad M(t) =  \int_{\mathbb{R}^n}|\psi({\bf x}, t)|^2 \mathrm{d}{\bf x}, \\ 
&\text{(energy)} \quad E(t) = \int_{\mathbb{R}^n} \Big[ \frac{1}{2}|\nabla \psi|^2 +  V|\psi|^2 + f({\bf x}, |\psi|^2)|\psi|^2 - \text{Re}(i({B} {\bf x})\cdot \nabla \psi \, \psi^{*}) \Big]\mathrm{d}{\bf x},
\end{aligned}
\end{equation}
where $f^{*}$ and $\text{Re}(f)$ denote the conjugate and real part of the function $f$ respectively.

From the exact splitting presented above, we deduce a new splitting for \eqref{eq:sch}. 
This splitting is based on Strang composition of the quadratic and the non-quadratic parts.  Indeed, 
we first rewrite \eqref{eq:sch} as 
$$
i\partial_t \psi = -p^w \psi + f({\bf x}, |\psi|^2) \psi, 
$$
where $-p^w\psi := -\frac{1}{2}\Delta \psi- i ({ B}  {\bf x})\cdot \nabla \psi + V({\bf x})\psi$ denotes the quadratic part 
(in the sense of Section \ref{section1}) and $f({\bf x}, |\psi|^2) \psi$ denotes the non quadratic part (which can be nonlinear).  
Based on this formulation and on the fact that exact splitting have been derived for the quadratic part, 
we then propose the following splitting (ESQM method) 
\begin{equation}
\label{esqm2}
 ( \psi({\bf x}, t_n)\approx) \psi^n({\mathbf{x}}) \ \ =  \left( e^{-i\frac{\Delta t}{2}f({\mathbf x}, |\psi|^2)} e^{-i\Delta t p^w} e^{-i\frac{\Delta t}{2}f({\mathbf x}, |\psi|^2)}  \right)^n\psi_0({\mathbf{x}})
\end{equation}
where the computation of $e^{-i\Delta t p^w}$ is done using (\ref{eq:MS2}). 
Let us remark that in this Strang based splitting, each part can be solved exactly in time and high order composition methods 
can be easily used to derive arbitrary high order time integrator (see \cite{HLW}). It can be shown that $\|\psi^n\|_{\ell^2}$ 
is preserved by the numerical schemes proposed here. In Algorithm \ref{algo1}, 
we detail the different steps of the exact splitting \eqref{esqm2} 
involving the pseudo-spectral discretization in dimension $3$.

\begin{algorithm}[t]
\caption{Pseudo-spectral method for ESQM \eqref{esqm2}}
\label{algo1}
    \begin{algorithmic}[1]
        \REQUIRE $\psi^{0} = \psi^0_{| \mathbb{G}_1 \times \mathbb{G}_2 \times \mathbb{G}_3 }$
        \FOR{$n=0$ to $n_{final}-1$}
            \STATE $\psi^{(1)} = e^{-i \Delta t/2 \; f(g, |\psi_{g}^{n}|^2)}\psi^{n}$ 
            \STATE  $\psi^{(2)} = e^{- i\Delta t V^{(r)}_{\Delta t}(g)} \psi^{(1)}$            
            \STATE ${\psi}^{(3)} =  e^{-i\Delta t \omega_3 (L_{\Delta t, 31}g_1 + L_{\Delta t, 32} g_2)}\mathcal{F}_3\psi^{(2)}$
                 \STATE ${\psi}^{(4)} =  e^{-i\Delta t \omega_2 L_{\Delta t, 21}g_1}\mathcal{F}_2{\psi}^{(3)}$
                 \STATE ${\psi}^{(5)} =  e^{-i\Delta t a(\omega)}\mathcal{F}_1 {\psi}^{(4)}$
                 \STATE ${\psi}^{(6)} =  e^{-i\Delta t \omega_2 U_{\Delta t, 23}g_3}\mathcal{F}^{-1}_3 {\psi}^{(5)}$
            \STATE ${\psi}^{(7)} =  e^{-i\Delta t \omega_1 (U_{\Delta t, 12}g_2 + U_{\Delta t, 13}g_3)}\mathcal{F}^{-1}_2 {\psi}^{(6)}$
            \STATE $\psi^{(8)} = e^{-i \Delta t V^{(r)}_{\Delta t}(g) }\mathcal{F}_1^{-1} {\psi}^{(7)}$
            \STATE $\psi^{n+1} = e^{-i{\Delta t/2} \; f(g, |\psi_{g}^{(8)}|^2)} {\psi}^{(8)}$
            \ENDFOR    
        \ENSURE  $\psi^{n_{final}}$    
\end{algorithmic}
\end{algorithm}

\begin{remark}
Note that some optimizations can be performed in Algorithm \ref{algo1} by noticing the following rearrangement 
\begin{equation} \label{eq:strang2}
\begin{split}
\psi^{n_{final}}&= \left( e^{-i\frac{\Delta t}{2}f({\mathbf x}, |\psi|^2)} e^{i\Delta t (-V({\mathbf{x}} )+ \frac{\Delta}{2} - B{\mathbf{x}}\cdot \nabla )}e^{-i\frac{\Delta t}{2}f({\mathbf x}, |\psi|^2)}  \right)^{n_{final}}\psi^0\\
&=e^{-i\frac{\Delta t}{2}f({\mathbf x}, |\psi|^2)-i \Delta t V^{(\ell)}_{\Delta t}({\mathbf x})}   e^{ -t (U_{\Delta t} {\mathbf x})_1 \partial_{x_1}  } e^{ -t (U_{\Delta t} {\mathbf x})_2 \partial_{x_2}  }  e^{i t A_{\Delta t}(\nabla)}     e^{ -\Delta t (L_{\Delta t} {\mathbf x})_2 \partial_{x_2}  }  e^{ -\Delta t (L_t {\mathbf x})_3 \partial_{x_3}  } \\
& \left( e^{-i{\Delta t}f({\mathbf x}, |\psi|^2)-i \Delta t V^{(\ell)}_{\Delta t}({\mathbf x})-i \Delta t V^{(r)}_{\Delta t}({\mathbf x})}   e^{ -\Delta t (U_{\Delta t} {\mathbf x})_1 \partial_{x_1}  }
e^{ -\Delta t (U_{\Delta t} {\mathbf x})_2 \partial_{x_2}  }  e^{i \Delta t A_{\Delta t}(\nabla)}  \right. \\ 
&\left.   e^{ -\Delta t (L_{\Delta t} {\mathbf x})_2 \partial_{x_2}  }  e^{ -\Delta t (L_{\Delta t} {\mathbf x})_3 \partial_{x_3}  }  \right)^{n_{final}-1} e^{-i \Delta t V^{(r)}_{\Delta t}(\mathbf{x})-i\frac{\Delta t}{2}f({\mathbf x}, |\psi|^2)} \psi^0.
\end{split}
\end{equation}
\end{remark}

\subsection{Numerical results}
This section is devoted to applications of exact splittings to the Schr\"odinger equations \eqref{eq:sch} both in the 
two and three dimensional cases. 
We show higher accuracy and efficiency of the exact splitting by comparing it to other usual numerical methods 
proposed in the literature~\cite{wang}. 

As previously, the space discretization requires a truncated domain denoted by $[-R_1, R_1] \times \dots \times [-R_n, R_n]$.
We will consider a uniform grid with $N_j$ points per direction so that the mesh size are $2R_j/N_j$. 

\subsubsection{2D Schr\"odinger equations}
Firstly, we consider the application of the exact splitting in the two-dimensional case on the magnetic Schr\"odinger equation 
and on the rotating Gross-Pitaevskii equation.

\noindent{\bf 2D magnetic Schr\"odinger equation}\\
In this numerical experiment, the 2D magnetic Schr\"odinger equation  is considered~\cite{Jin},
\begin{equation}\label{eq:2Dsch}
i\epsilon \partial_t \psi({\bf x},t) = -\frac{\epsilon^2}{2} \Delta  \psi({\bf x},t) + i\epsilon {\bf A} \cdot \nabla \psi ({\bf x},t) + \frac{1}{2}{|\bf A|}^2\psi({\bf x},t),
\end{equation}
with $\epsilon = 1/32$ and where ${\bf x} = (x_1,x_2) \in \mathbb{R}^2$, ${\bf A} = \frac{1}{2}\transp{(-x_2, x_1)}$.
The initial condition is given by 
$$
\psi_0({\bf x}) = e^{-20(x_1-0.05)^2-20(x_2-0.1)^2} e^{i \sin(x_1) \sin(x_2)/\epsilon}. 
$$
The numerical parameters are chosen as follows: $N_1=N_2=256$, $R_1=R_2=3\pi$ and $\Delta t=0.3$. 
We shall compare three different splittings: 
\begin{itemize}
\item ESQM (see \eqref{esqm2} with $f=0$ and \eqref{eq:MS2}); this method is exact in time.  
\item ESR (see Appendix \ref{magschro_split}); this method is second order accurate in time. 
\item Strang (see Appendix \ref{magschro_split}); this method is second order accurate in time. 
\end{itemize}
Let us remark that ESR and Strang are two operator 
splittings which differ from the treatment of the rotation part  $\partial_t \psi = {\bf A} \cdot \nabla \psi$. 
Indeed, this part is solved exactly in the ESR method (this is the reason why we used the same name 
as in Section \ref{transport}) whereas a second order directional splitting is used to approximate 
it in the Strang method. We refer to Appendix \ref{magschro_split} for the details. 
Let us now write the operators which define the exact splitting ESQM. From \eqref{eq:MS2}, we have  
$$
e^{i\epsilon \Delta t(\frac{\Delta }{2} - \frac{1}{2\epsilon^2} |{\mathbf A}|^2) + \epsilon  \Delta t{\frac{\mathbf A}{\epsilon}}\cdot \nabla} = e^{-\epsilon  \Delta tv^{(\ell)}({\mathbf x})} e^{-\epsilon  \Delta t(U_{\Delta t}{\mathbf x})_1\partial x_1} e^{i\epsilon  \Delta ta(\nabla)} e^{-\epsilon  \Delta t(L_{\Delta t}{\mathbf x})_2\partial_{x_2}} e^{-\epsilon  \Delta tv^{(r)}({\mathbf x})}, 
$$
where $v^{(\ell)}({\mathbf x}) = {\mathbf x}^{\mathrm{T}}V_{\Delta t}^{(\ell)} {\mathbf x}, \;\; v^{(r)}({\mathbf x}) = {\mathbf x}^{\mathrm{T}}V_{\Delta t}^{(r)} {\mathbf x}$, and $a(\nabla) = \nabla  \cdot (A_{\Delta t} \nabla )$, $V_{\Delta t}^{(\ell)}, L_{\Delta t},  U_{\Delta t}, V_{\Delta t}^{(r)}, A_{\Delta t}$ being $2$x$2$ matrices.  
The corresponding coefficients for ESQM are  
\begin{align*}
&A_{\Delta t} \simeq \begin{pmatrix} 0.503532819405421 & -0.074439184790650 \\
-0.074439184790650 & 0.503784060194312
 \end{pmatrix},\\
 &L_{\Delta t} \simeq  \begin{pmatrix} 0 & 0  \\
 -16.121089926218119 & 0 
 \end{pmatrix}, \quad 
 U_{\Delta t} \simeq \begin{pmatrix} 0 & 15.761077688604765  \\
0& 0 
 \end{pmatrix}, \\
 &V^{(\ell)}_{\Delta t} \simeq  \begin{pmatrix} 128.9687194097432&0
  \\
0 & -0.0000000000018
 \end{pmatrix},\\
 &V^{(r)}_{\Delta t} \simeq  \begin{pmatrix} 2.8800979009085&-19.0564313064080 \\
-19.0564313064080 & 126.0886215088400
 \end{pmatrix}.
 \end{align*}
First of all, we show the contour plots of $|\psi({\bf x}, t)|^2$ at the final time $t=300$ in  Figure \ref{fig:2D_plot} 
for the three methods. Second, the time history energy error (defined by \eqref{eq:invariant}) 
obtained by the three methods is presented on Figure \ref{fig:2Denergy}. 
 As \eqref{eq:2Dsch} is a quadratic equation, by Theorem \ref{thm_splt_MS2}, the ESQM method 
 gives the exact solution (if neglecting space error). From Figure \ref{fig:2Denergy}, we can see 
 that its energy error is at machine precision level, which is much smaller than the energy errors of Strang and ESR 
 (for which the energy errors oscillate around a constant).   
 Specifically, as the rotation velocity of Strang is not correct (see \cite{JC2}), 
 we can see in Figure \ref{fig:2D_plot} that the contour plot obtained by Strang is not good. 
 For ESR, even if the rotation velocity is right and as such the shape of the solution has the correct orientation, 
 some error are clearly observed. 

\begin{figure}[htbp]
\center{
\includegraphics[scale=0.7]{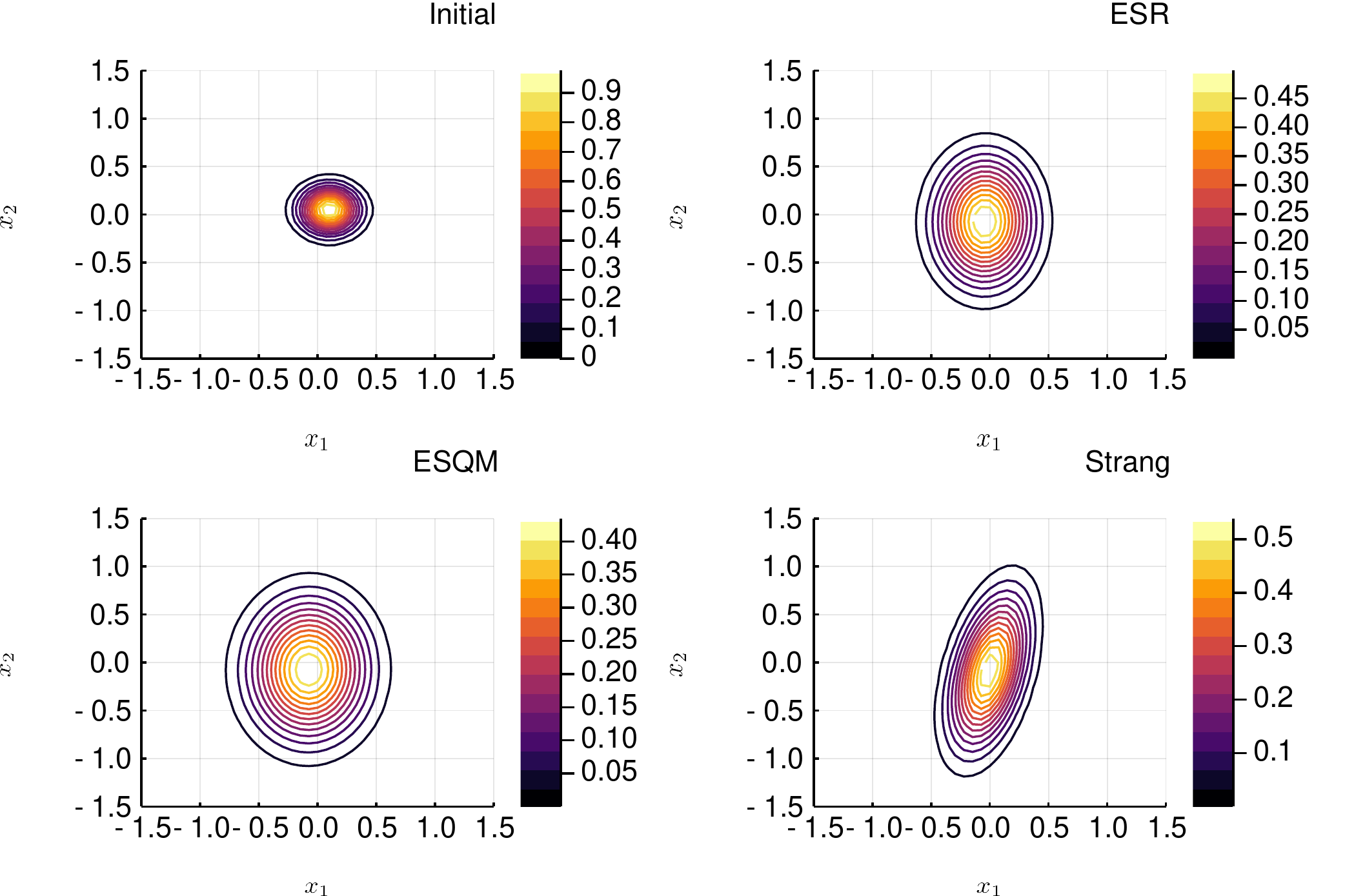}
}
\caption{Contour plot of initial density, and the contour plots of ESQM, ESR and Strang splitting method at $t = 300$ 
with  $\Delta t = 0.3$,  $N_1=N_2=256$ and $\epsilon = \frac{1}{32}$ for 2D magnetic Schr\"odinger problem.}\label{fig:2D_plot}
\end{figure}

\begin{figure}[htbp]
\center{
\includegraphics[scale=0.36]{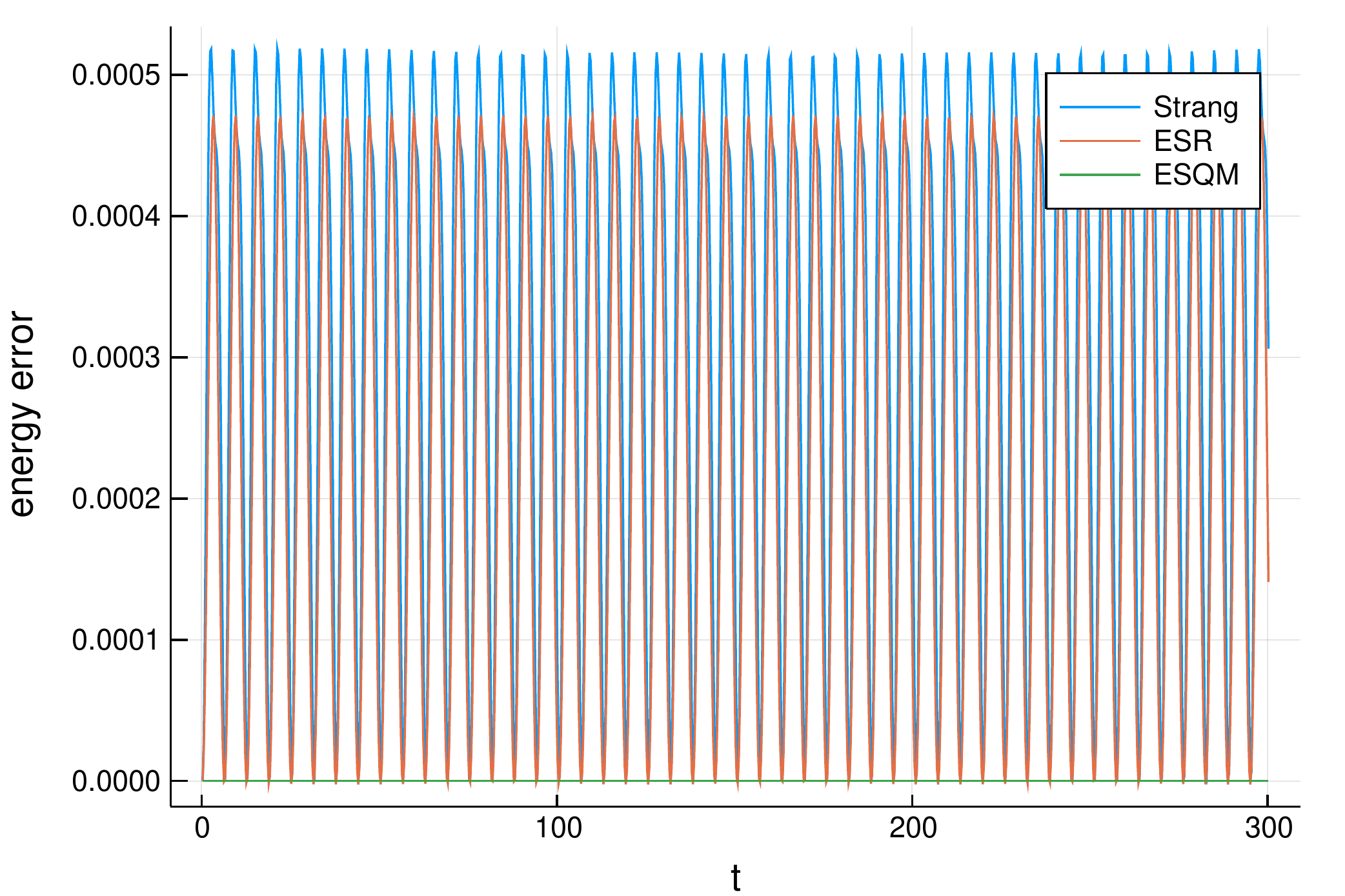}
}
\caption{Time evolution of energy error of ESR, ESQM and Strang splitting method with  $\Delta t = 0.3$,  $N_1=N_2=256$ and 
$\epsilon = \frac{1}{32}$ for 2D magnetic Schr\"odinger equation.}\label{fig:2Denergy}
\end{figure}

\noindent{\bf 2D rotating Gross-Pitaevskii equation}\\
We now consider the dynamics of rotating Bose-Einstein condensates, which is described by the 
macroscopic wave function $\psi({\bf x}, t)$ (${\bf x} = (x_1, x_2)\in \mathbb{R}^2, t\geq 0$) 
solution to the following rotating Gross-Pitaevskii equation (GPE) (see \cite{Bao, wang}) 
\begin{equation}
\begin{aligned}\label{eq:gpe}
&{i}\partial_t \psi({\bf x}, t) = -\frac{1}{2}\Delta \psi({\bf x},t) + V({\bf x}) \psi({\bf x},t) + \beta | \psi |^2 \psi({\bf x},t) - \Omega L_{x_3} \psi({\bf x},t), \;\;\; \psi({\bf x},0) = \psi_0({\bf x})\\
\end{aligned}
\end{equation}
where $L_{x_3} = - i (x_1\partial_{x_2} - x_2 \partial_{x_1})$ is the $x_3$-component of the angular momentum, $\Omega$ is the angular speed of the laser beam, $\beta$ is a constant characterizing the particle interactions and $V({\bf x})$ 
denotes the external harmonic oscillator potential 
\begin{equation}\label{eq:poten}
V({\bf x}) = \frac{1}{2}(\gamma_{x_1}^2 x_1^2 + \gamma_{x_2}^2 x_2^2), 
\end{equation} 
with constants $\gamma_{x_1} > 0$ and $\gamma_{x_2} > 0$.

In addition to the mass and energy preservations (\ref{eq:invariant}), 
 the expectation of angular momentum is also conserved when $\gamma_{x_1} = \gamma_{x_2}$ 
\begin{equation}
\label{angular}
Lz(t) := \int_{\mathbb{R}^2} \psi^*({\bf x}, t) L_{x_3}\psi({\bf x}, t) d{\bf x} = Lz(0).
\end{equation}
We are also interested in the time evolution of condensate widths and mass center defined as follows,
\begin{equation}
\begin{aligned}
&\text{condensate widths}:  \, S_{\alpha}(t) = \sqrt{\int_{\mathbb{R}^2} \alpha^2 |\psi({\bf x}, t)|^2 d{\bf x}}, \quad \alpha = x_1, x_2,\\
&\text{mass center}: {\bf x}_{c} (t)= \int_{\mathbb{R}^2} {\bf x} |\psi({\bf x}, t)|^2d{\bf x}.
\end{aligned}
\label{diag_gpe}
\end{equation}

For the two dimensional rotating GPE \eqref{eq:gpe}, our first numerical test is the so-called dynamics of a stationary state with a shifted center~\cite{Bao}. We take $ \gamma_{x_1} = \gamma_{x_2} = 1, \beta = 100$ in \eqref{eq:gpe} and 
the initial condition is taken as 
$$
\psi_0({\bf x}) = \phi_e({\bf x} - {\bf x}_0), 
$$
where $\phi_e({\bf x}) $ is a  ground state computed numerically from \cite{zhang} and ${\bf x}_0 = \transp{(1,1)}$.  The numerical parameters 
are chosen as follows: $\Delta t = 0.001$ and the spatial domain $[-8,8]^2$ is discretized using $N_1=N_2=256$ points.  

As in the magnetic Schr\"odinger case, we will consider the following three methods to 
approximate \eqref{eq:gpe}
\begin{itemize}
\item ESQM (see \eqref{esqm2} with $f({\bf x}, |\psi|^2) = \beta |\psi|^2$ and \eqref{eq:MS2}); this method is second order accurate in time.  
\item ESR (see Appendix \ref{esr_gpe}); this method is second order accurate in time. 
\item BW from \cite{Bao} (see Appendix \ref{bw}); this method is second order accurate in time. 
\end{itemize}
Concerning  ESQM, we then have to define from \eqref{eq:MS2} how the quadratic part $p^w:=(i/2) \Delta  - \Omega L_{x_3} - iV({\mathbf{x}})$ 
of the nonlinear equation \eqref{eq:gpe} is split. This is done as follows (the two cases $\Omega = -0.5$ and $\Omega = 0$ 
are considered) 
$$
e^{i\Delta t p^w} = e^{- \Delta t v^{(\ell)}({\mathbf x})} e^{-  \Delta t(U_{\Delta t}{\mathbf x})_1\partial_{x_1}} e^{i  \Delta ta(\nabla)} e^{-  \Delta t(L_{\Delta t}{\mathbf x})_2\partial_{x_2}} e^{- \Delta tv^{(r)}({\mathbf x})}, 
$$
where $v^{(\ell)}({\mathbf x}) = {\mathbf x}^{\mathrm{T}}V_{\Delta t}^{(\ell)} {\mathbf x}, \;\; v^{(r)}({\mathbf x}) = {\mathbf x}^{\mathrm{T}}V_{\Delta t}^{(r)} {\mathbf x}$, and $a(\nabla) = \nabla  \cdot (A_{\Delta t} \nabla )$, $V_{\Delta t}^{(\ell)}, L_{\Delta t}, U_{\Delta t},  V_{\Delta t}^{(r)}, A_{\Delta t}$ being $2$x$2$ matrices.  
In the case $\Omega = -0.5$, we have 
\begin{align*}
&A_{\Delta t} \simeq \begin{pmatrix} 0.499999979166481 & 0.000249999948070 \\
0.000249999948070 & 0.499999979166811
 \end{pmatrix},\\
 &L_{\Delta t} \simeq  \begin{pmatrix} 0 & 0  \\
 0.500000041666386 & 0 
 \end{pmatrix},\quad 
 U_{\Delta t} \simeq  \begin{pmatrix} 0 & -0.499999916666976  \\
 0 & 0 
 \end{pmatrix},\\
 &V^{(\ell)}_{\Delta t} \simeq  \begin{pmatrix} 0.312500037673140 &0 \\
0& 0.187500011883192
 \end{pmatrix},\\
 &V^{(r)}_{\Delta t} \simeq  \begin{pmatrix} 0.187500043056181&0.000062500002332 \\
0.000062500002332 & 0.312500006345821
 \end{pmatrix}.
 \end{align*}
and in the case $\Omega = 0$, we have 
\begin{align*}
&A_{\Delta t} \simeq \begin{pmatrix} 0.499999916666670 & 0 \\
0 & 0.499999916666676
 \end{pmatrix},\\
 & L_{\Delta t} \simeq  \begin{pmatrix} 0 & 0  \\
 0 & 0 
 \end{pmatrix},\quad  
  U_{\Delta t} \simeq  \begin{pmatrix} 0 & 0  \\
 0 & 0 
 \end{pmatrix},\\
 &V^{(\ell)}_{\Delta t} \simeq  \begin{pmatrix} 0.250000020830132 & 0 \\
0 & 0.250000020802363 
 \end{pmatrix},\\
 &V^{(r)}_{\Delta t} \simeq  \begin{pmatrix} 0.250000020836539&0 \\
0 & 0.250000020864292
 \end{pmatrix}.
 \end{align*} 
We first validate our ESQM approach by plotting the time history of mass center and condensate widths \eqref{diag_gpe}, 
and angular momentum expectation \eqref{angular} in Figure \ref{fig:evolution}. 
From \cite{Bao}, the mass center is known to be periodic, and the period is equal to   $2\pi$ (resp. $4\pi$) 
when $\Omega = 0$ (resp. $\Omega = -0.5$). As observed in the numerical results, 
the numerical method preserved accurately this property.

In the sequel, we compare  ESQM to ESR and BW. 
Let us remark that ESQM only needs $4$ FFT for each time step whereas BW needs $6$ and ESR needs $10$. 
As the FFT calls are the most consuming part of the three methods, ESQM is the most efficient and we then 
have to check its accuracy. 
The energy error \eqref{eq:invariant} and angular momentum expectation error \eqref{angular}  
of the three methods (ESR, ESQM and BW) are presented in Figure \ref{fig:energyerror1} for the case that $\Omega = -0.5$ 
and for different time steps.   
First, we notice that the three methods are second order accurate regarding the energy, as expected. However, the error constant 
is smaller for ESQM which is due to the fact that the linear part is solved exactly. In particular, 
the advantage of ESQM is more obvious when nonlinear parameter $\beta$ is smaller since the nonlinear part is less important 
and the exact treatment of the quadratic part in ESQM make it better. 
For the angular moment expectation conservation, we can see that BW is still second order in time, whereas 
ESR and ESQM are close to the machine precision independently of $\beta$. 
The reason is that angular moment expectation is conserved by the solution of each subsystem in ESQM and ESR 
(see \cite{Bao} for more details).  
Now in Figure \ref{fig:cost_magic_rotation}, we are interested in the computational costs of the three methods (ESR, ESQM, and BW) as a function of the number of grid points $N_1 \times N_2$ in space, by running $100$ iterations. 
In addition to its accuracy, one observes that ESQM is the most efficient. 
As mentioned before, the computational cost comes from the number of FFTs required in each method.  

To end this part, we focus on a second numerical experiment where the time evolution of a ground state 
is studied by changing the corresponding potential initially as \cite{Besse, review_3}. 
Now, the parameters are $\beta = 1000$, $\Omega = 0.9$, the potential is given by (\ref{eq:poten}) with 
$\gamma_{x_1} = 1.05, \gamma_{x_2} = 0.95$.  The initial condition is the ground state corresponding to the isotropic 
potential  $V({\mathbf x}) = |{\mathbf x}|^2/2$, $\beta = 1000$, and $\Omega = 0.9$,  generated  using the Matlab toolbox GPELab\footnote{http://gpelab.math.cnrs.fr}~\cite{Xavier, Xavier2}. In this numerical test, we only run ESQM 
and consider the numerical parameters as follows: the spatial grid 
is defined by $[-8,8]^2$ and $N_1=N_2=128$ whereas the time step size is $\Delta t=10^{-3}$. The coefficients 
for ESQM in \eqref{eq:MS2} are given by 
\begin{align*}
&A_{\Delta t} \simeq \begin{pmatrix}
0.500000110624718  &-0.000449999864087   \\
  -0.000449999864087 &  0.500000127291716   
 \end{pmatrix}, \\  
 & 
 L_{\Delta t}\simeq \begin{pmatrix}
                 0     &              0     \\              
  -0.900000273000100          &         0                   
 \end{pmatrix},         
U_{\Delta t}\simeq \begin{pmatrix}
       0 &  0.899999484000032 \\  
                   0            &       0  
\end{pmatrix}, \\  
 &
V^{(\ell)}_{\Delta t}\simeq \begin{pmatrix} 
   0.478125137872035        &           0             \\      
                   0  & 0.023124993175043                   
\end{pmatrix}, \\  
 &    
V^{(r)}_{\Delta t}\simeq \begin{pmatrix}
0.073125336336629 & -0.000364499913716\\   
  -0.000364499913716 &  0.428124781225271  
\end{pmatrix}. 
\end{align*}
The numerical results are displayed in Figure \ref{fig:evolution_groundstate} where the solution 
is plotted for different times ($t=0, 1.5, 3, 4$). These results are in very good agreement with those obtained in the literature \cite{Besse, review_3}. We also present in Figure \ref{fig:dynamic_error} the time evolution of the energy error, from which we can see that energy conservation is very good (about $10^{-7}$).


\begin{figure}[htbp]
\center{
\includegraphics[scale=0.7]{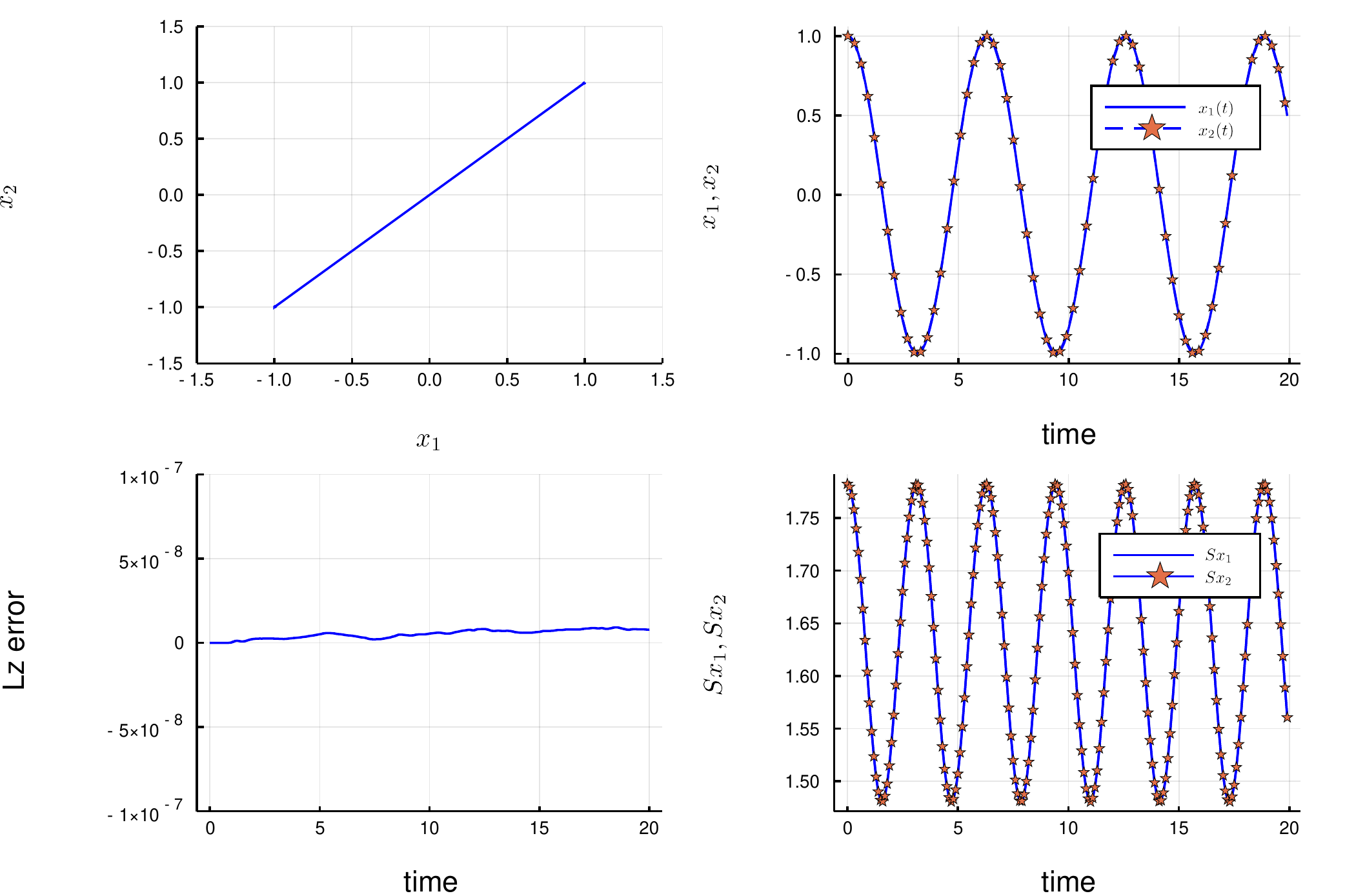}\\
\includegraphics[scale=0.7]{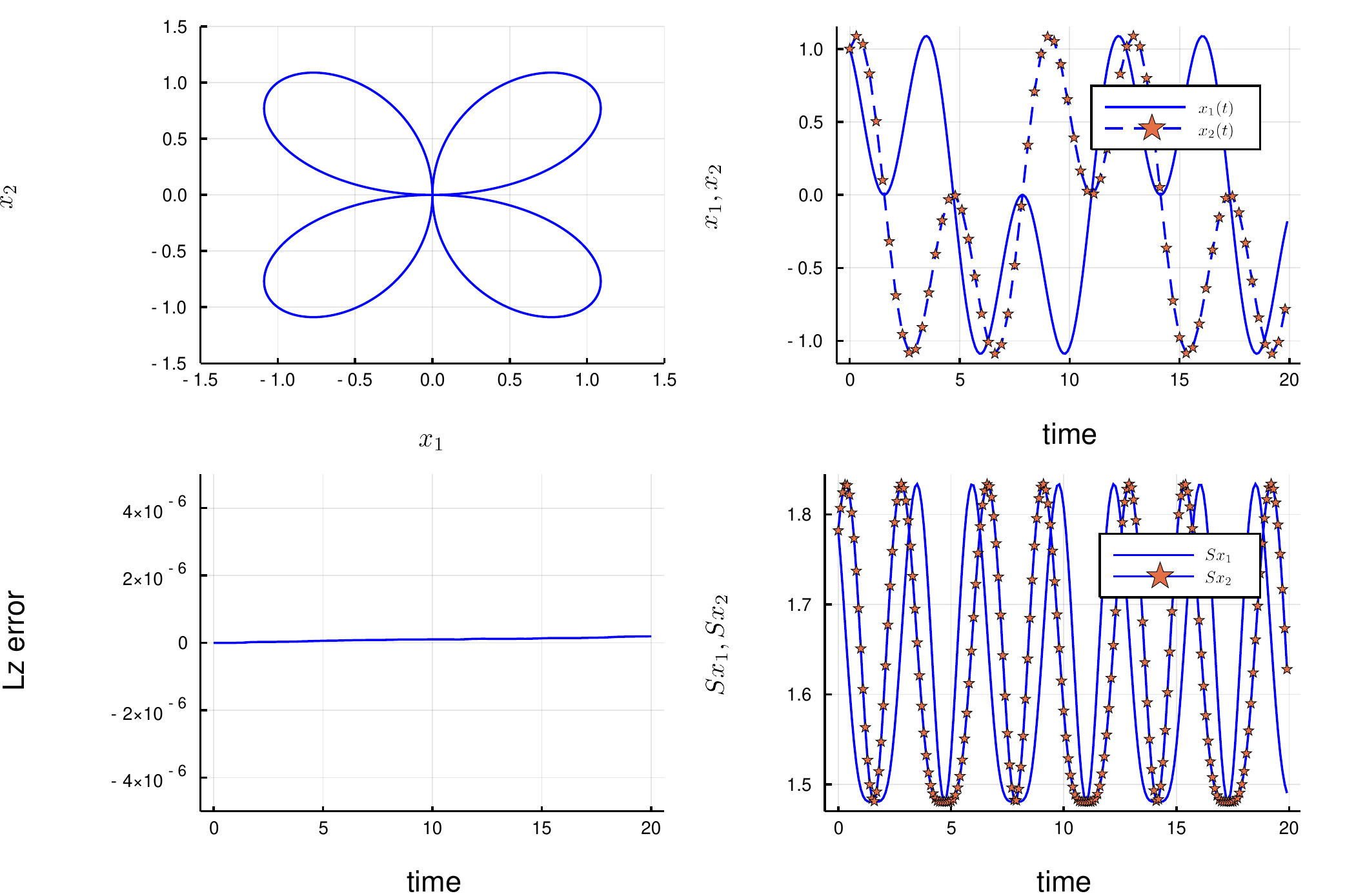}
}
\caption{Time evolution of mass center, coordinates of mass center, error on angular momentum expectation, and condensate widths by ESQM when $\Omega = 0$ (top four figures) and $\Omega = -0.5$ (bottom four figures).}
\label{fig:evolution}
\end{figure}


\begin{figure}[htbp]
\center{
\subfigure[]{\includegraphics[scale=0.35]{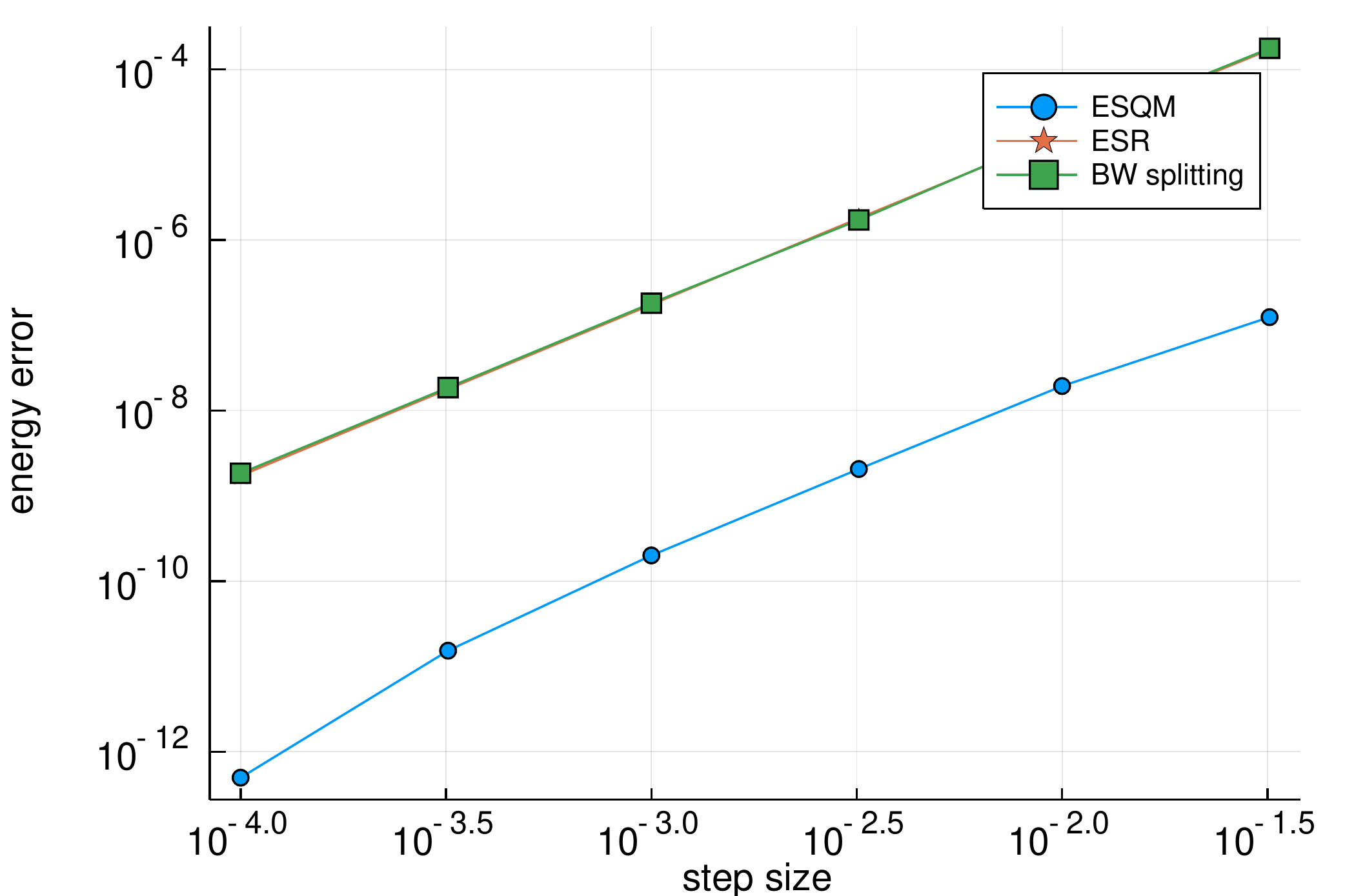}}
\subfigure[]{\includegraphics[scale=0.35]{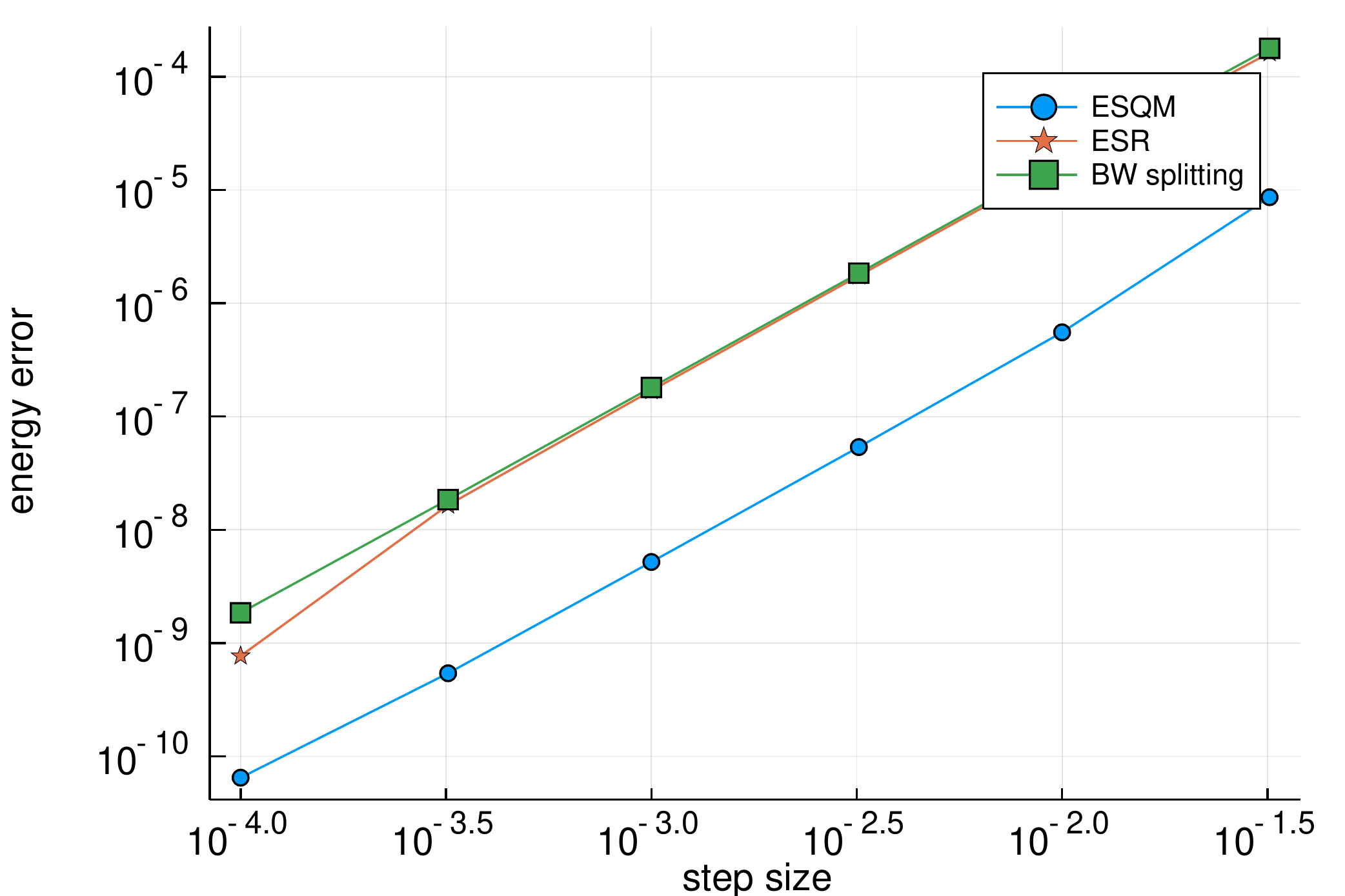}}\\
\subfigure[]{\includegraphics[scale=0.35]{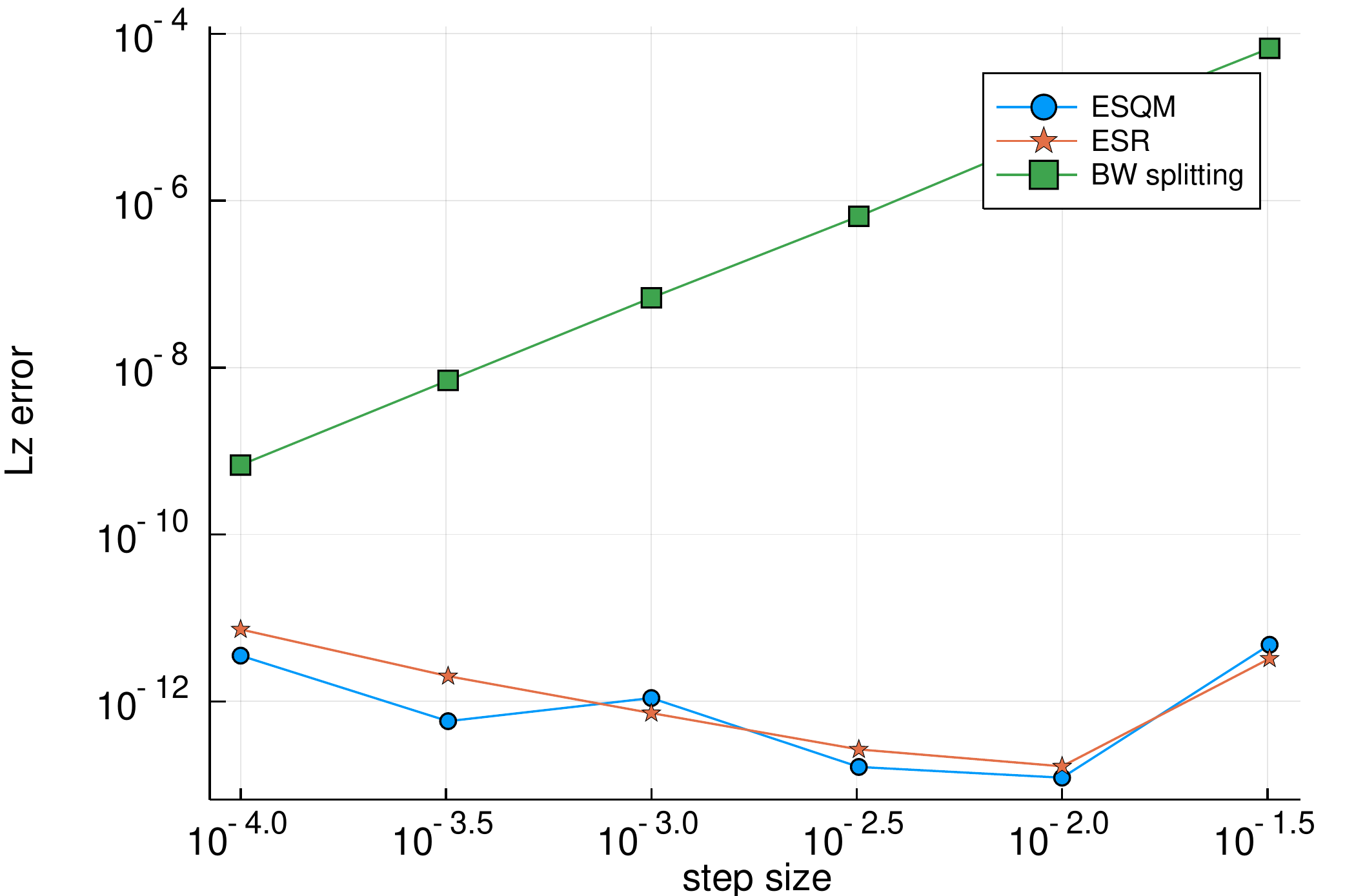}}
\subfigure[]{\includegraphics[scale=0.35]{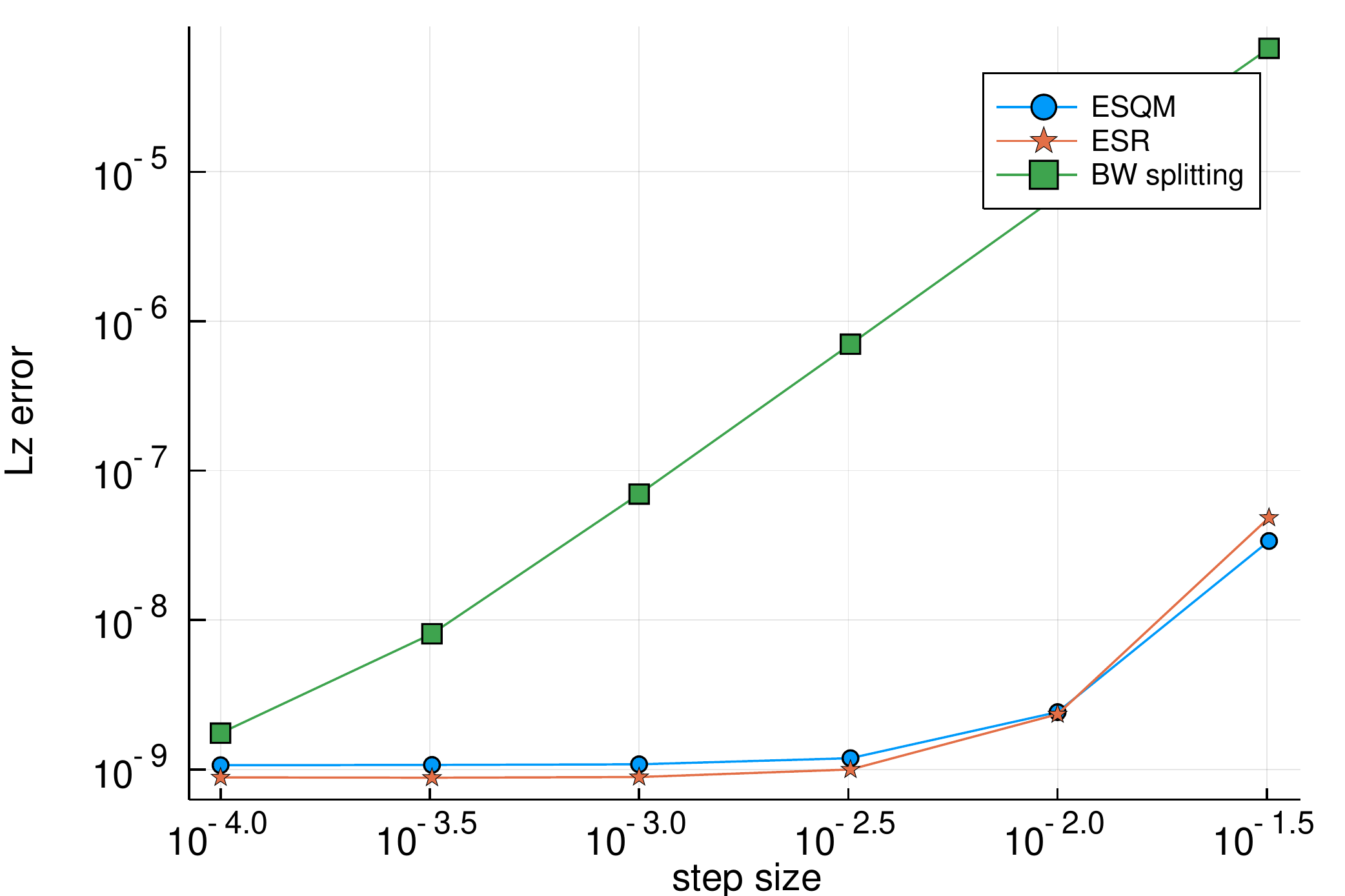}}
}
\caption{Energy error and angular momentum expectation error  (semi-$\log_{10}$ scale) 
as a function of the step size for the three methods ESQM, ESR and BW at $t=1$ for 
$\Omega = -0.5$, when $\beta = 5$,  (a,c), $\beta = 100$, (b,d).}
\label{fig:energyerror1}
\end{figure}

\begin{figure}[htbp]
\center{
\includegraphics[scale=0.4]{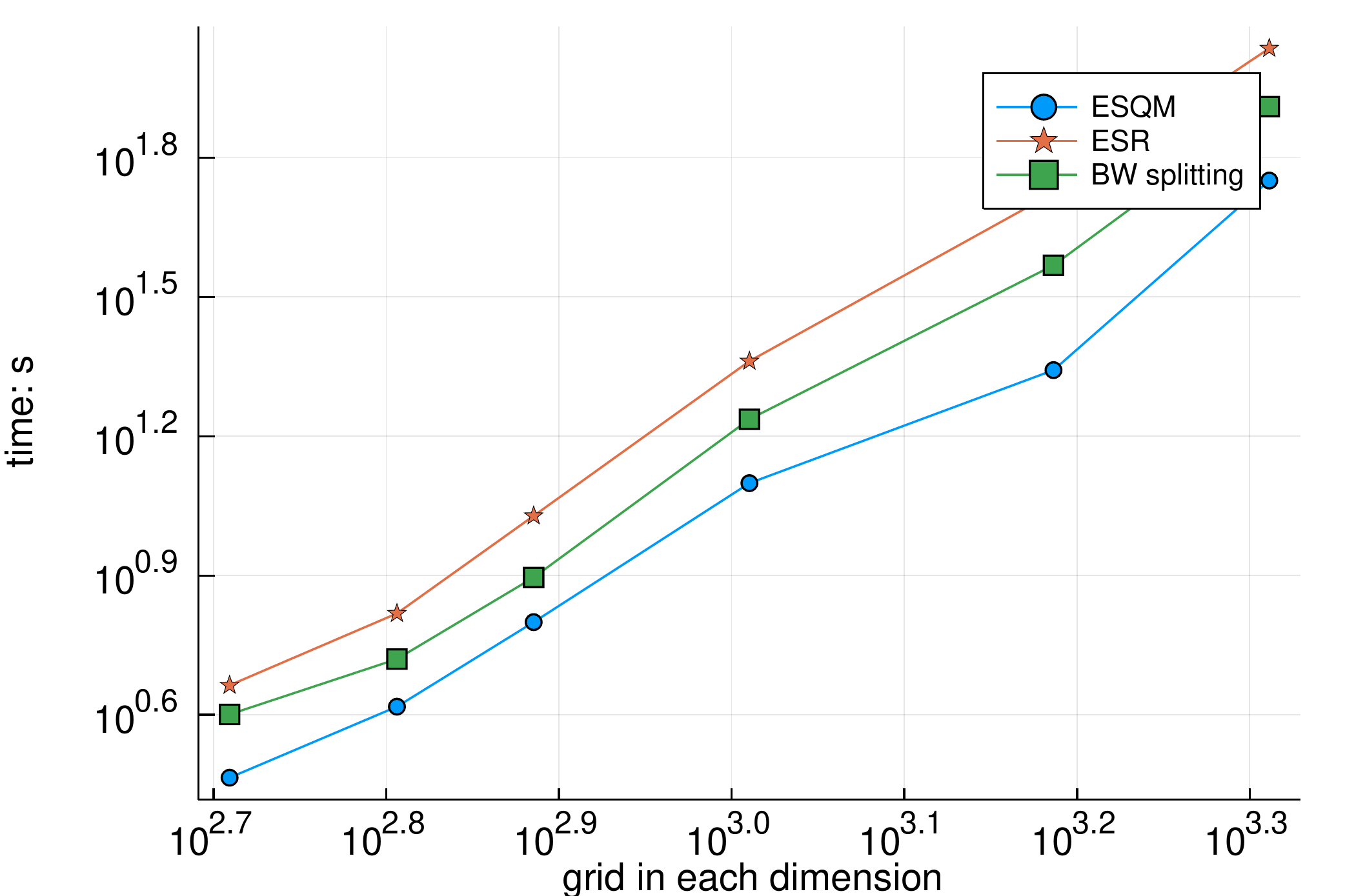}
}
\caption{Comparisons of computational costs between ESQM, ESR, and BW by running $100$ steps for rotating GPE \eqref{eq:gpe}.}
\label{fig:cost_magic_rotation}
\end{figure}


\begin{figure}[ht]
\center{
\includegraphics[scale=0.45]{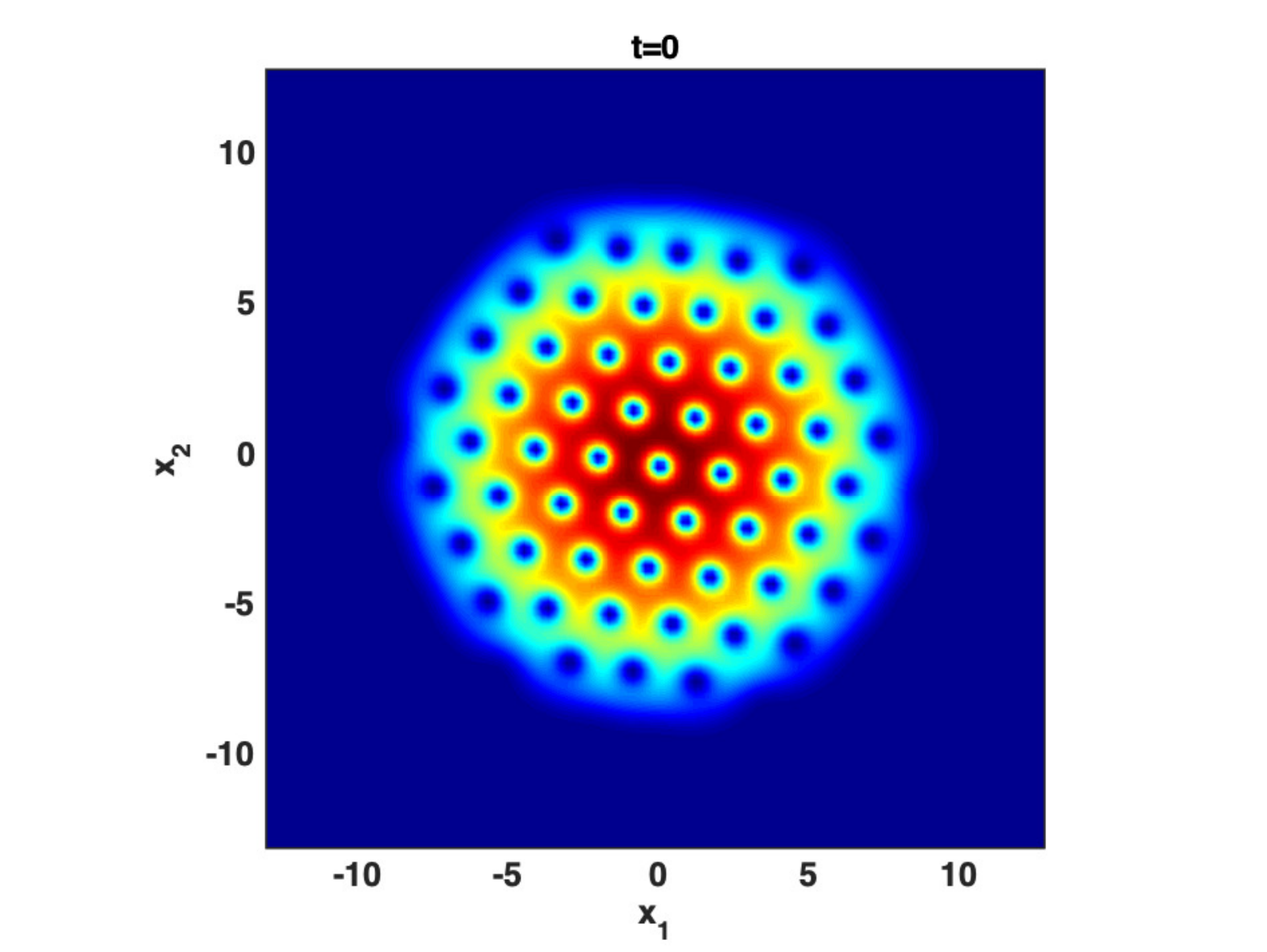}
\includegraphics[scale=0.45]{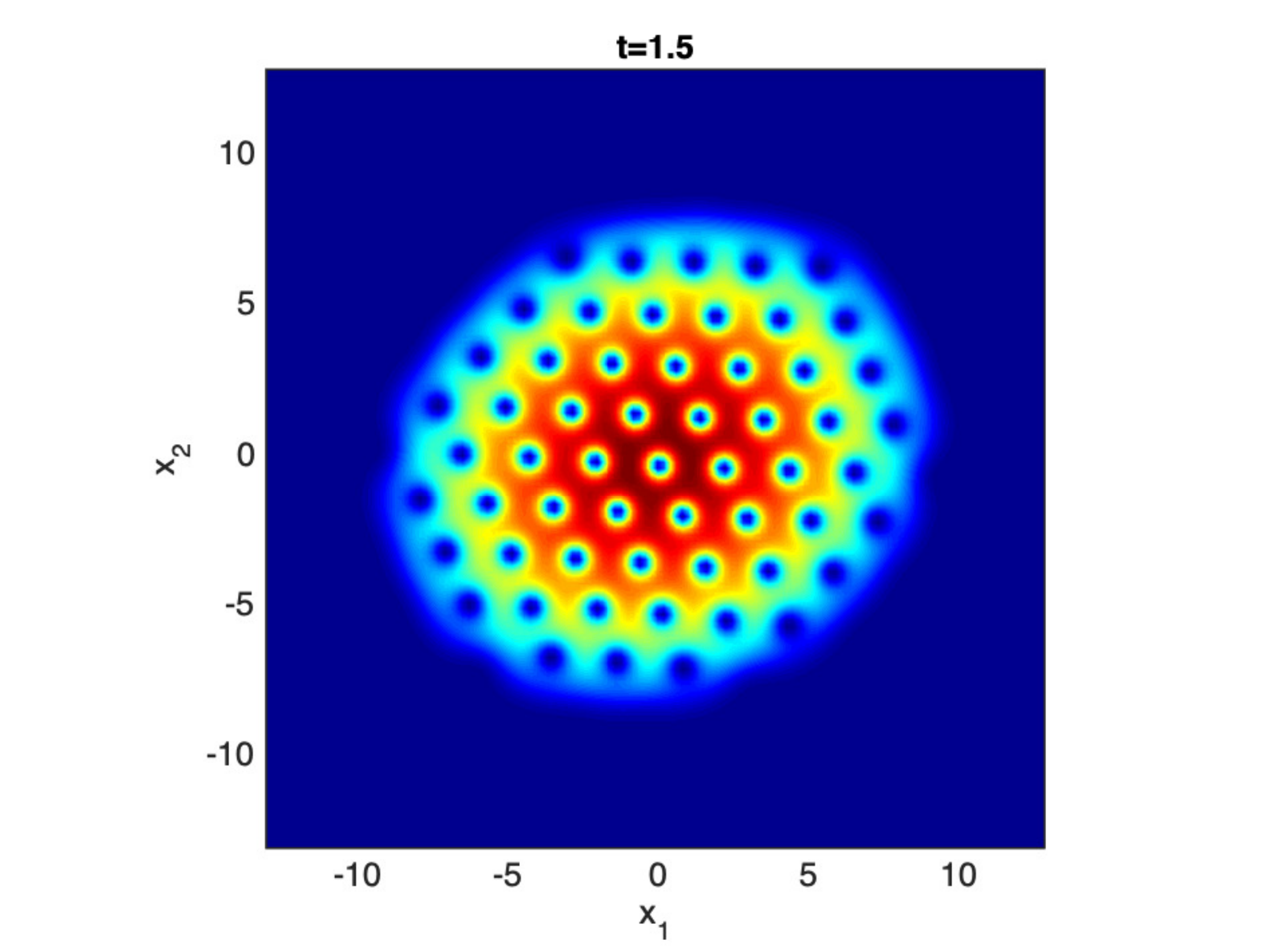}
\includegraphics[scale=0.45]{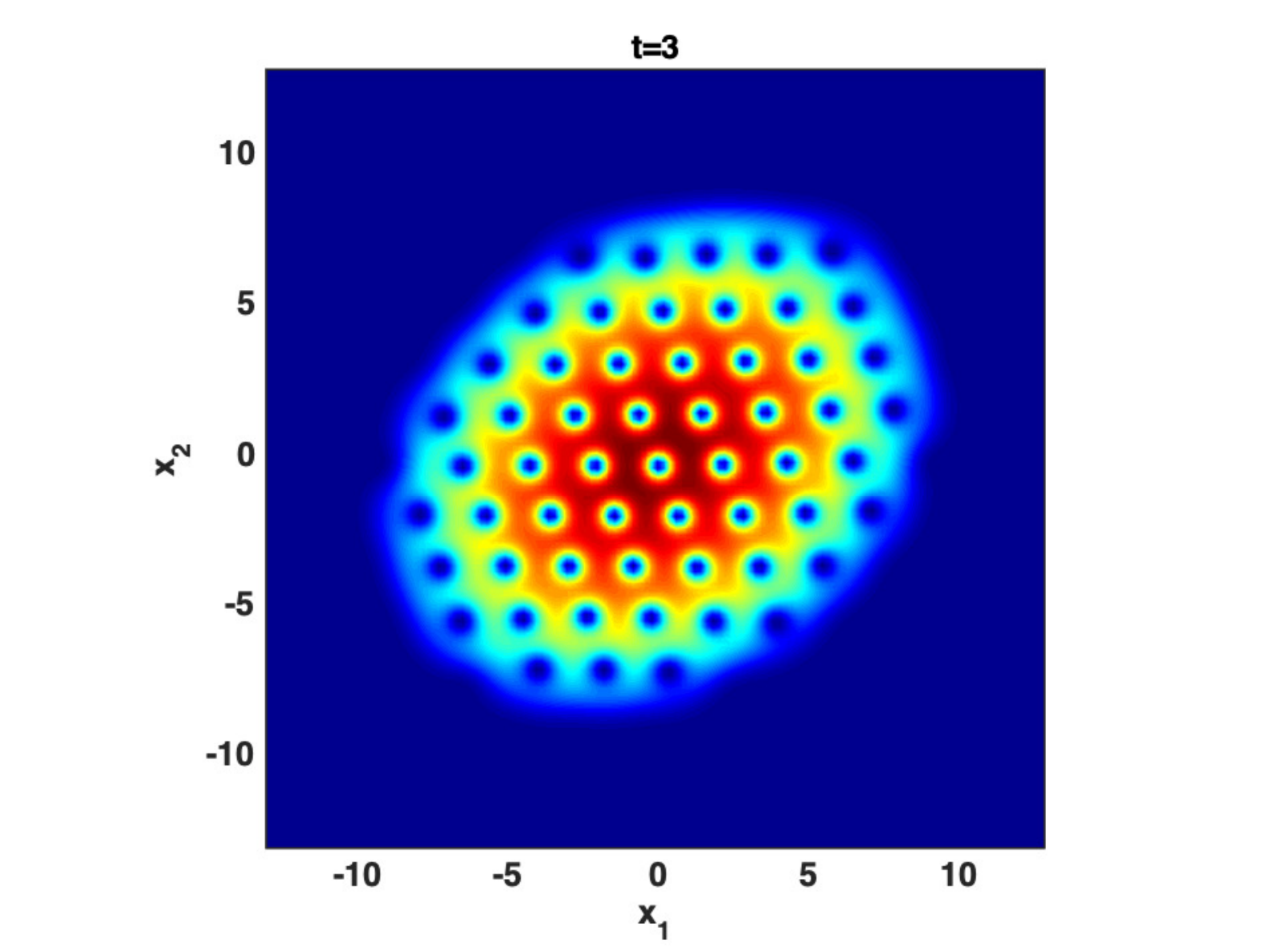}
\includegraphics[scale=0.45]{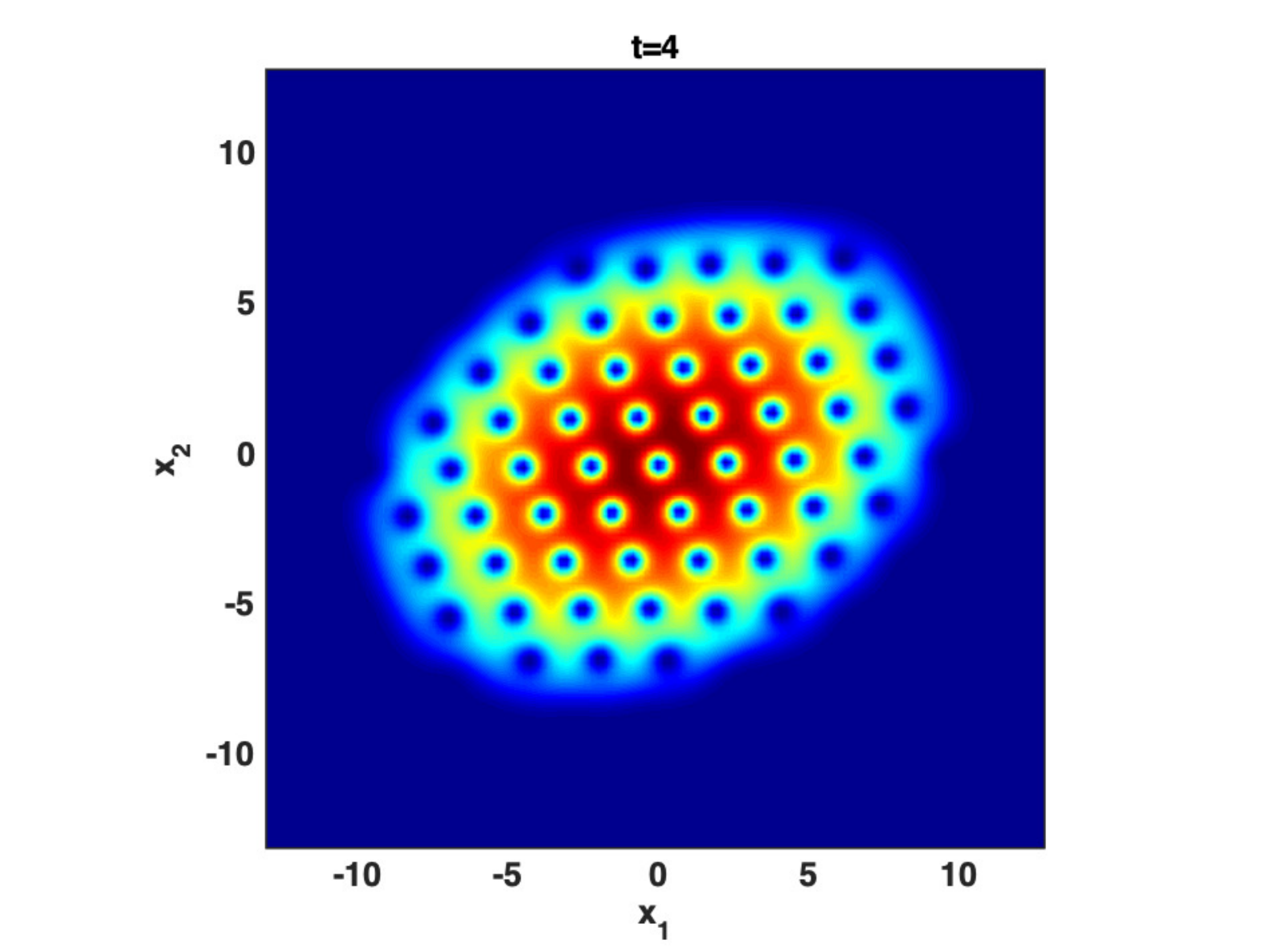}
}
\caption{$\beta = 1000, \Omega = 0.9$. Time evolution of the solution of \eqref{eq:gpe} by changing the potential initially.}\label{fig:evolution_groundstate}
\end{figure}

\begin{figure}[ht]
\center{
\includegraphics[scale=0.45]{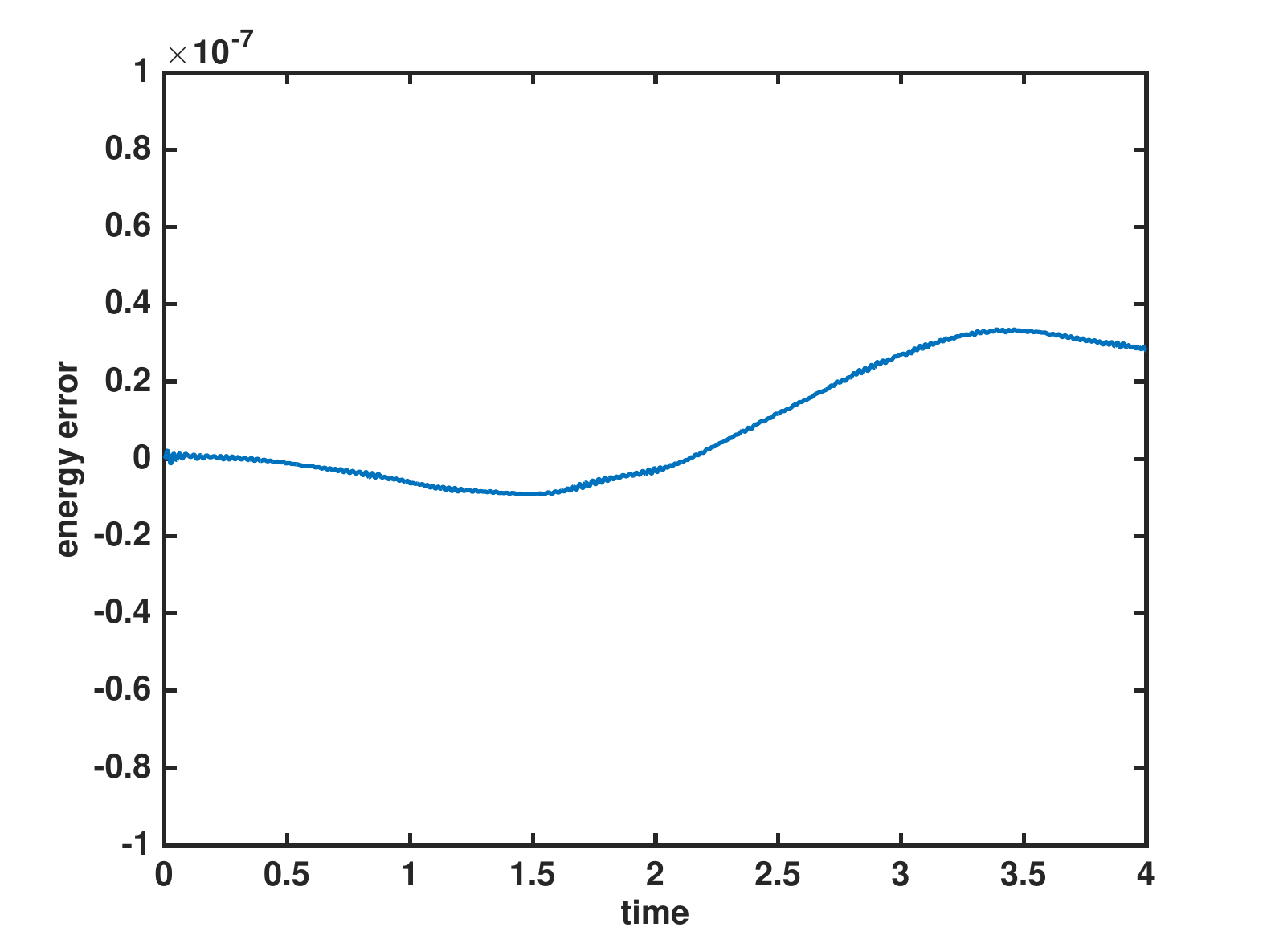}
}
\caption{$\beta = 1000, \Omega = 0.9$. Time evolution of the energy of \eqref{eq:gpe} by changing the potential initially.}\label{fig:dynamic_error}
\end{figure}
%

\subsubsection{3D Schr\"odinger equations}
In this section, 3D Schr\"odinger equations are considered through two cases: 
$(i)$ a quadratic Schr\"odinger equation is constructed specifically such that the solution is periodic in time; 
$(ii)$ a magnetic Schr\"odinger equation with a non-quadratic potential (see \cite{Ostermann}).

\noindent{\bf 3D time-periodic quadratic linear Schr\"odinger equation}\\
For (\ref{eq:sch}), we consider $f=0$ and 
\begin{equation}
\label{eq:3601}
B = \frac{\pi}3\begin{pmatrix} 0 & -1 & 1 \\ 1 & 0 & -1 \\ -1 & 1 & 0 \end{pmatrix} \quad \mbox{ and }\quad V({\bf x}) = \frac{\pi^2}9 \transp{{\bf x}}\begin{pmatrix} \lambda_1  \\& \lambda_2 \\ & & \lambda_3 \end{pmatrix} {\bf x}, 
\end{equation}
where ${\bf x}=(x_1, x_2, x_3)$ and $(\lambda_1, \lambda_2, \lambda_3)$ are the roots of the polynomial 
$Q(X)= 7200 X^3 - 72196 X^2 + 222088 X - 216341$, i.e.  
\begin{equation}\label{eq:3602}
\begin{pmatrix} \lambda_1  \\ \lambda_2 \\ \lambda_3 \end{pmatrix} \simeq \begin{pmatrix}  2.27017996551810 \\2.53418020791380 \\5.22286204879033\end{pmatrix}.
\end{equation}
In this case, the period of this system is $T=360$ (see in Appendix \ref{appendixc} for the proof) and the initial condition is  
\begin{align}
\psi_0(x_1,x_2,x_3) = \left( \frac{2}{\pi} \right)^3 e^{-x_1^2} e^{-x_2^2}e^{-(x_3-1)^2} + i \left( \frac{2}{\pi} \right)^3 e^{-x_1^2} e^{-(x_2+1)^2}e^{-(x_3-1)^2}. 
\end{align}
The numerical parameters are chosen as: the spatial domain $[-8,8]^3$ is discretized by $N_1=N_2=N_3=96$ points, 
the time step is $\Delta t = 0.2$, and the final time is $t = 720$ which corresponds to two periods. 
We will consider two different methods: 
\begin{itemize}
\item ESQM from (\ref{eq:MS2}) whose coefficients  are listed in Appendix \ref{sec:3D coefficients}; the method is exact in time. 
\item Strang (see in Appendix \ref{3dtime_split}); the method is second order accurate in time. 
\end{itemize}

In Figures \ref{fig:MS2} and \ref{fig:strang360}, the time evolution of 
$\psi(t, 0,0,0)$ (real and imaginary parts) are presented by using ESQM and Strang respectively. 
We also plot the difference $\psi(t\in [T, 2T], 0,0,0)-\psi(t\in [0, T], 0,0,0)$ 
(real and imaginary parts) which should be zero since the solution is time periodic of period $T=360$. 
We can see that with ESQM, the period is nicely preserved (up to $10^{-13}$) 
in spite of the fact that the time history of the solution is quite complicated. 
However, one can observe in Figure \ref{fig:strang360} that the conclusion is not the same for Strang: 
its error  is too large to identify the period. In Figure \ref{fig:error360}, 
the time history of energy error is plotted for both ESQM and Strang methods. Clearly, Strang produces large errors  
whereas the error from ESQM is very small (only due to the space approximation). 
Concerning the computational cost,  $6$ FFT (or inverse) are required for each time step for ESQM whereas  
Strang needs $15$ FFT (or inverse). 
Finally, some contour plots of the solution (at time $t=360$ and the third spatial direction $x_3$ is fixed to $0$) 
obtained by ESQM and Strang are presented at Figure \ref{fig:contour360}. We expect  $\psi(t=360, x_1, x_2, 0)$ 
to be very close to the initial condition since the solution is $360$ periodic in time. 
Even if ESQM gives very accurate results, one can see in Figure \ref{fig:contour360} that the result obtained 
by Strang is rather different. 
\begin{figure}[htbp]
\subfigure[]{\includegraphics[scale=0.45]{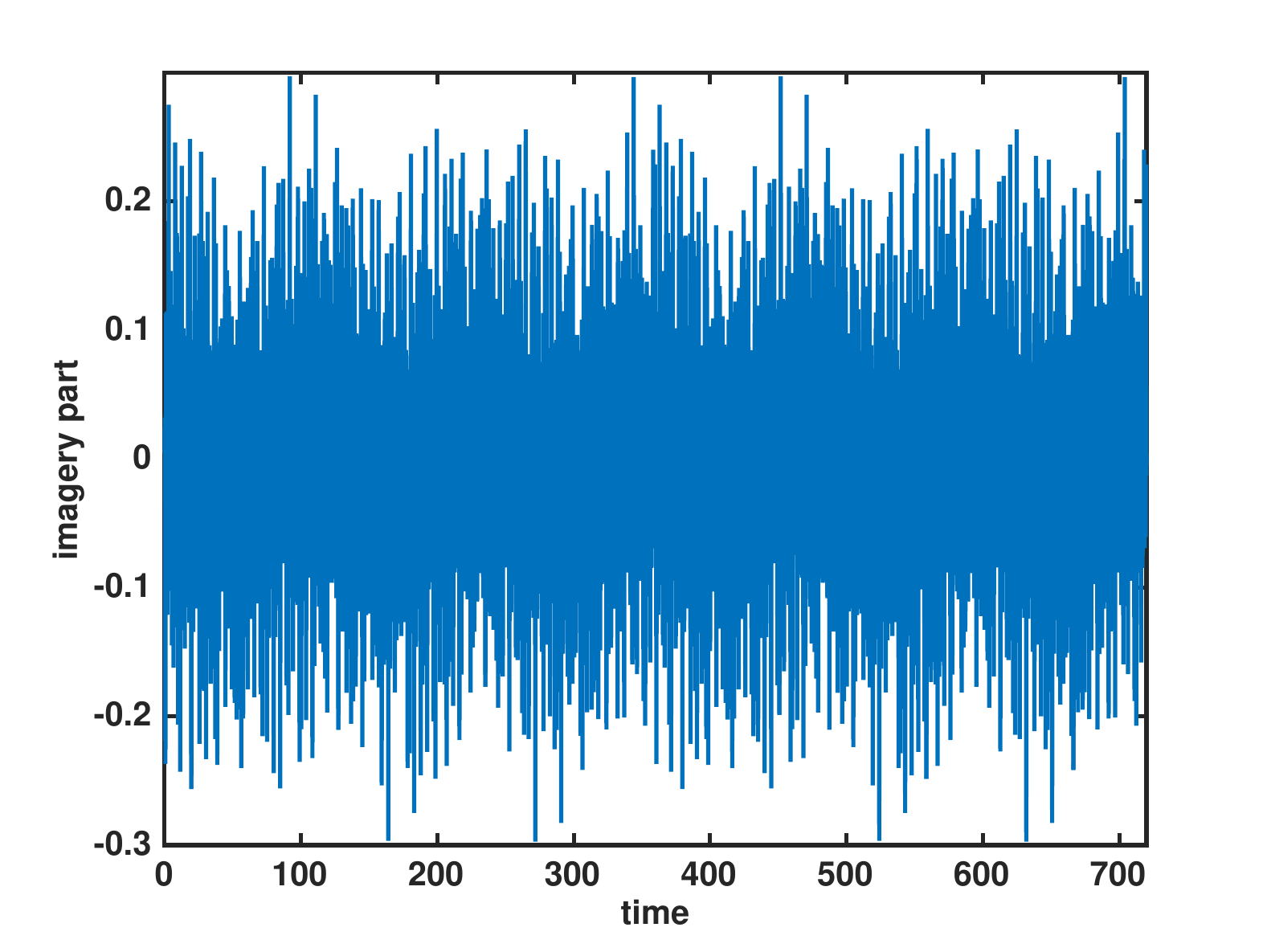}}
\subfigure[]{\includegraphics[scale=0.45]{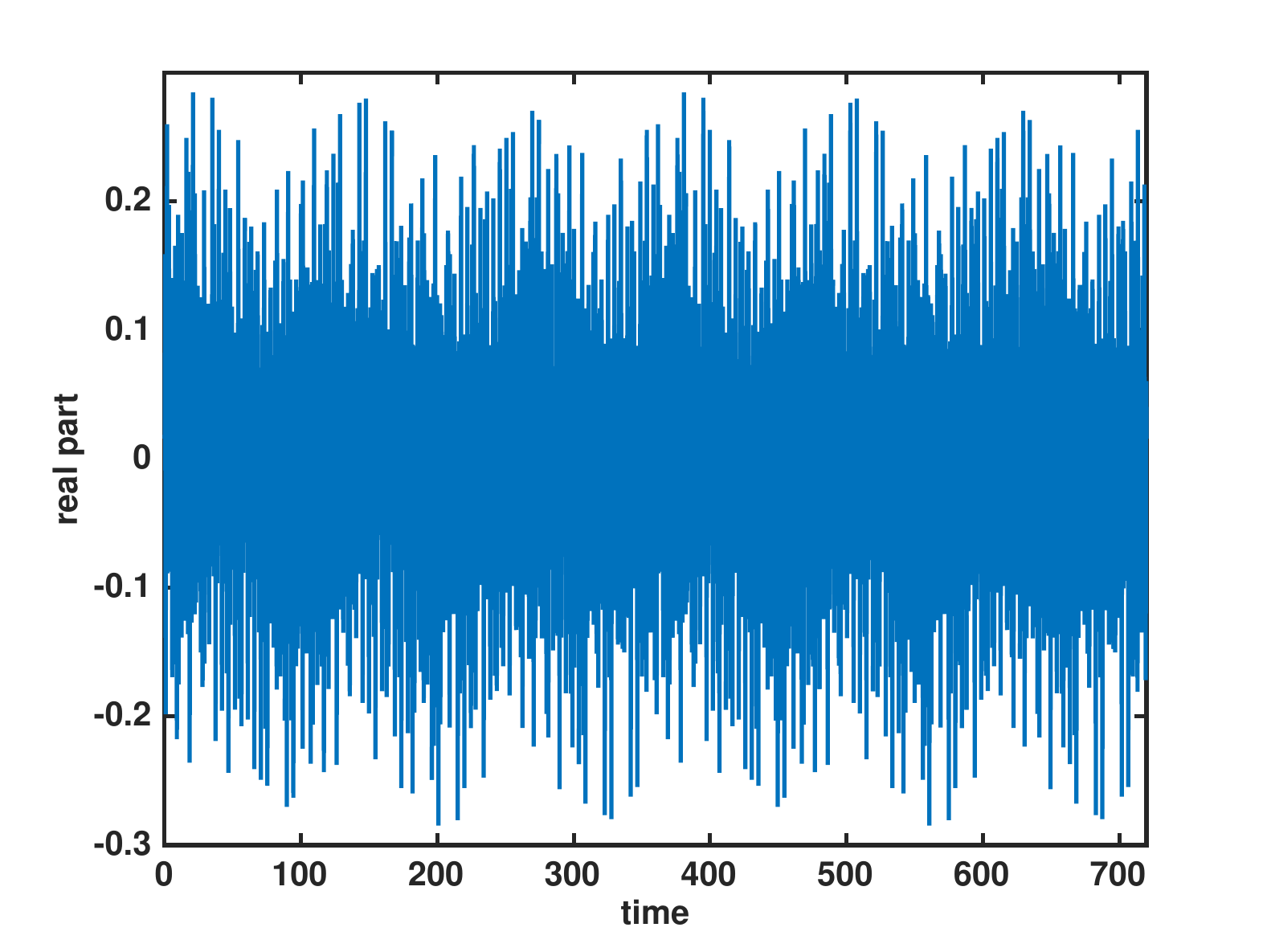}}\\
\subfigure[]{\includegraphics[scale=0.45]{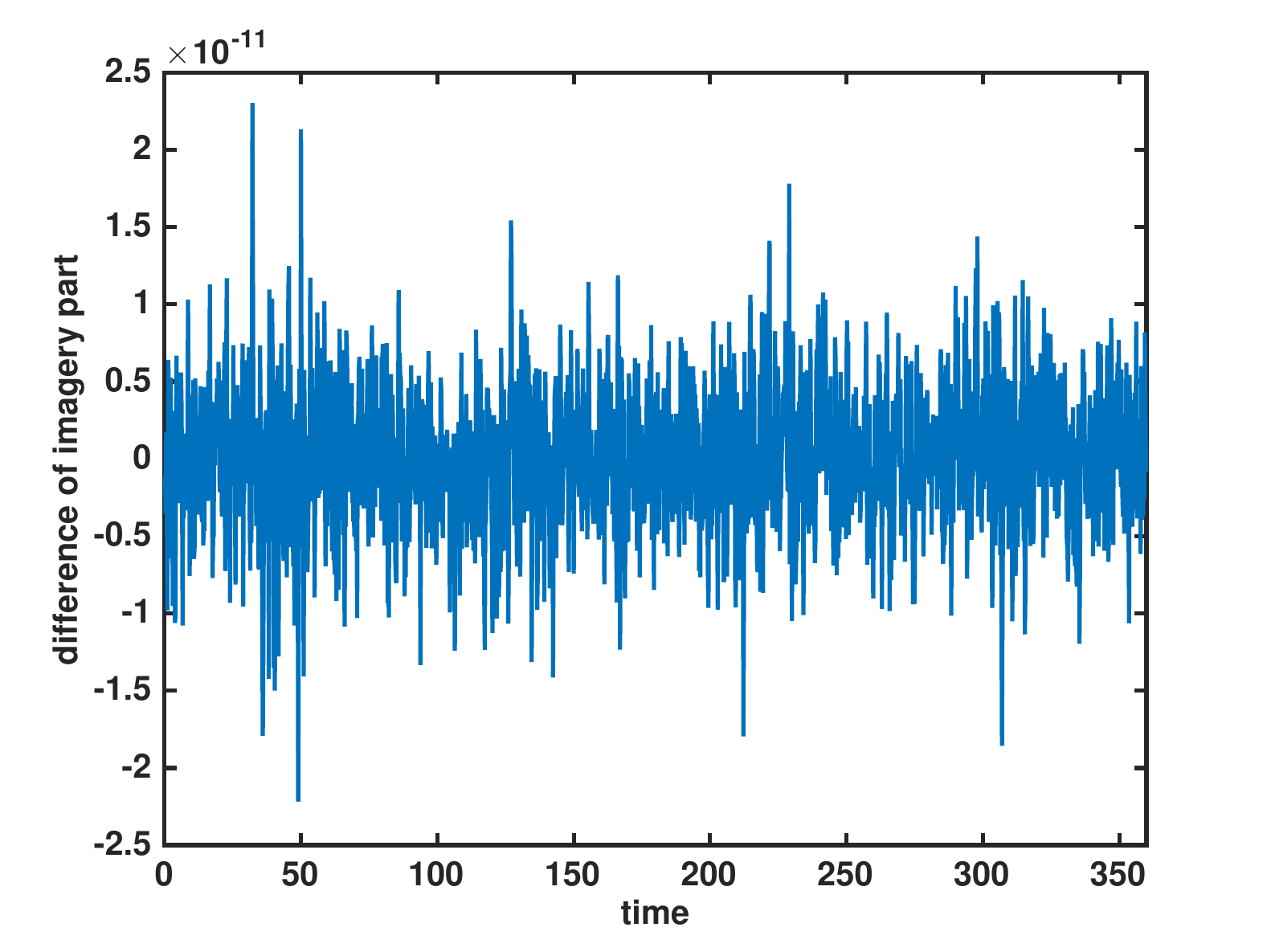}}
\subfigure[]{\includegraphics[scale=0.45]{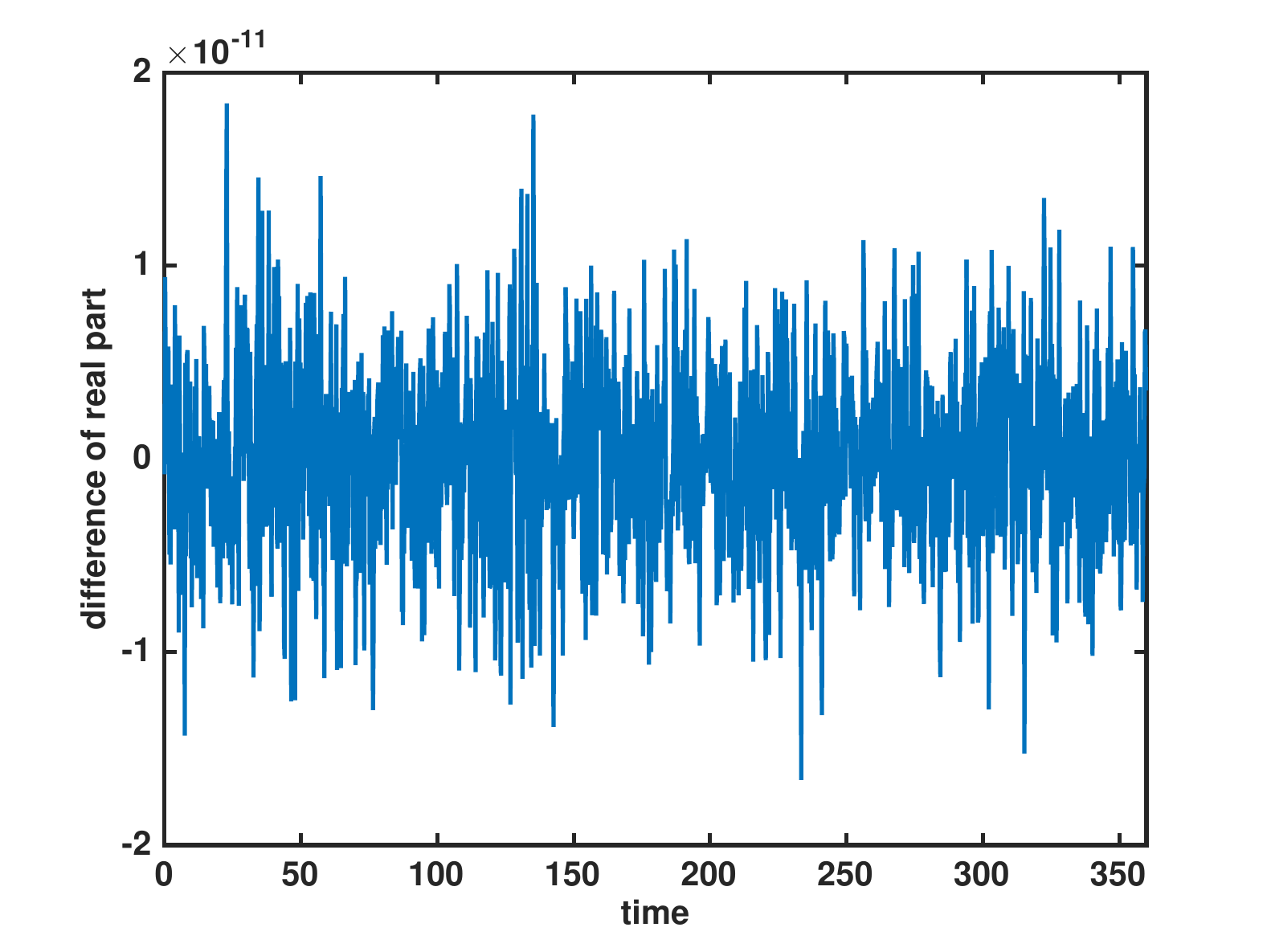}}
\caption{{ ESQM}: (a) Time evolution of imaginary part of $\psi(t, 0,0,0)$. (b) Time evolution of real part of $\psi(t, 0,0,0)$. (c) The difference $\psi([T, 2T], 0,0,0)-\psi([0, T], 0,0,0)$, $T=360$ (imaginary part). (d) The difference $\psi([T, 2T], 0,0,0)-\psi([0, T], 0,0,0)$, $T=360$ (real part).}\label{fig:MS2}
\end{figure}

\begin{figure}[htbp]
\subfigure[]{\includegraphics[scale=0.45]{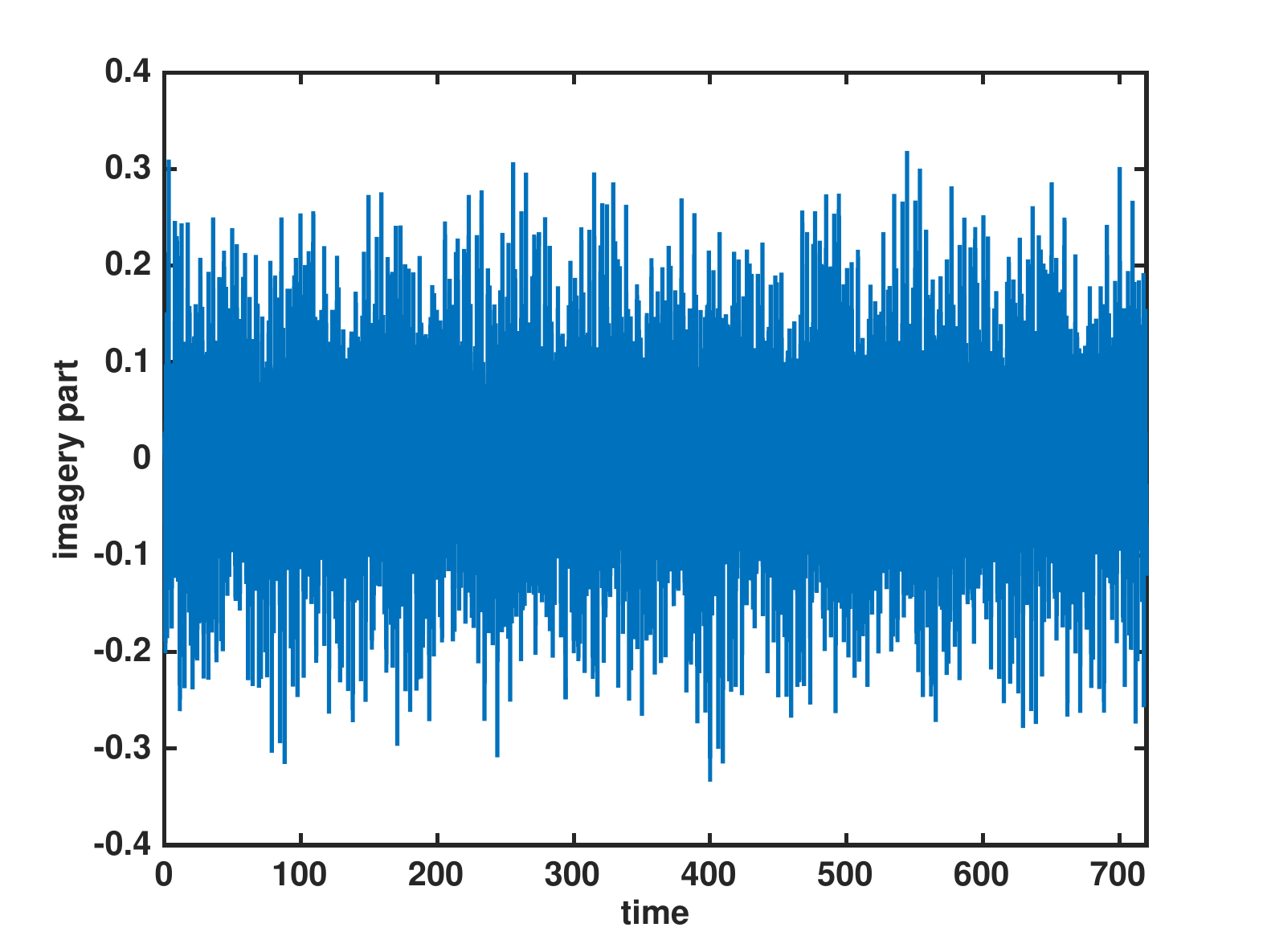}}
\subfigure[]{\includegraphics[scale=0.45]{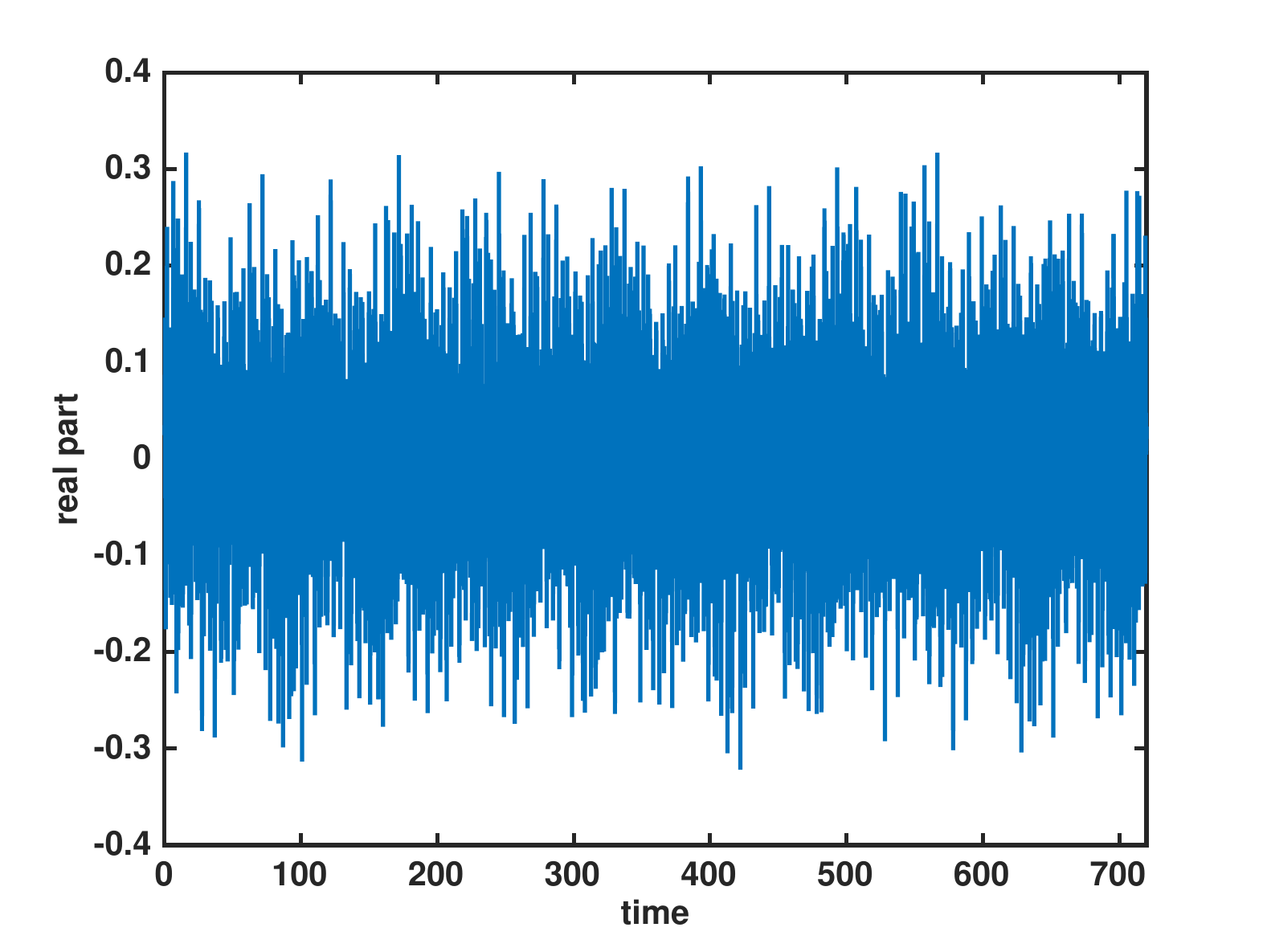}}\\
\subfigure[]{\includegraphics[scale=0.45]{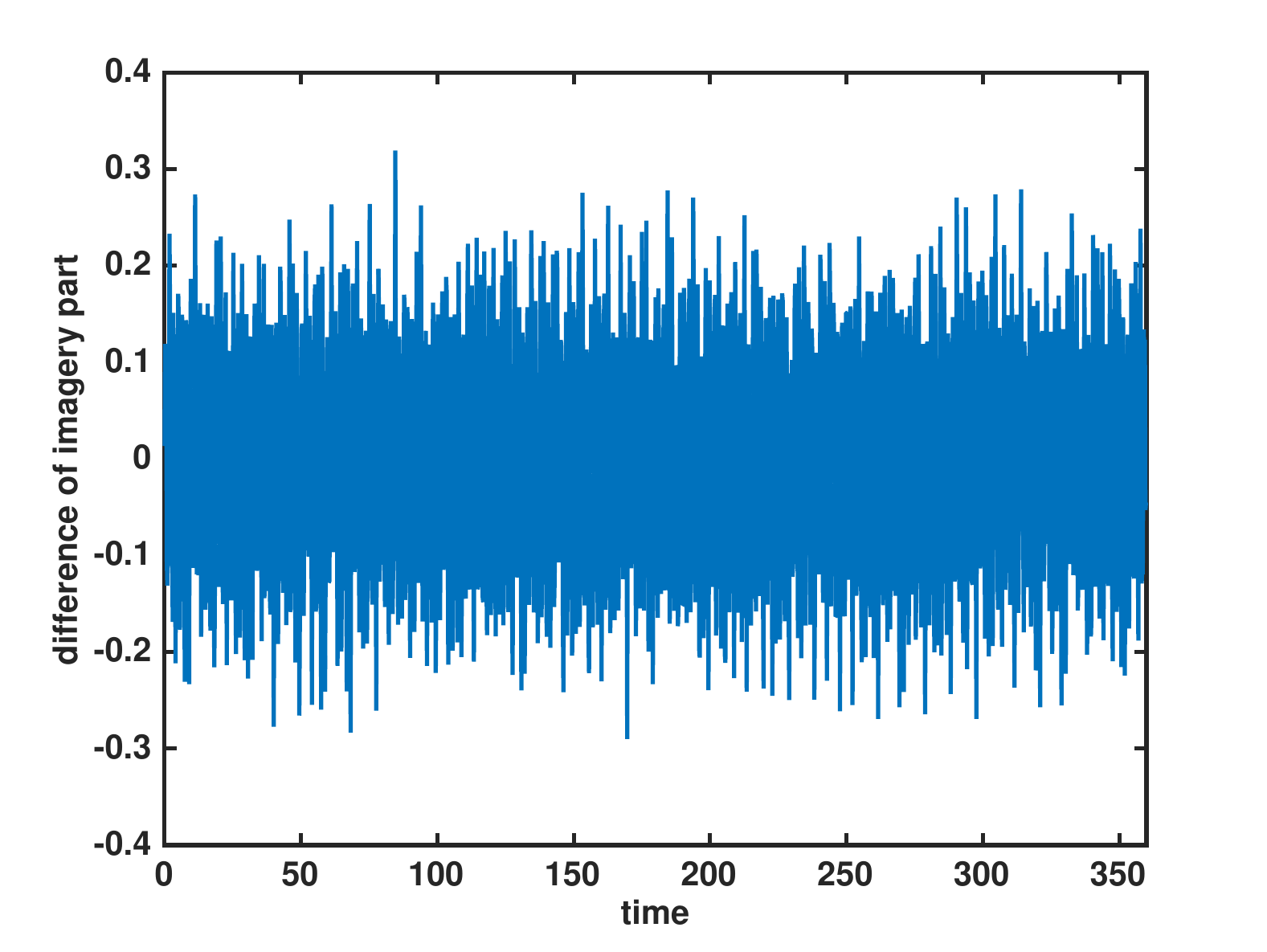}}
\subfigure[]{\includegraphics[scale=0.45]{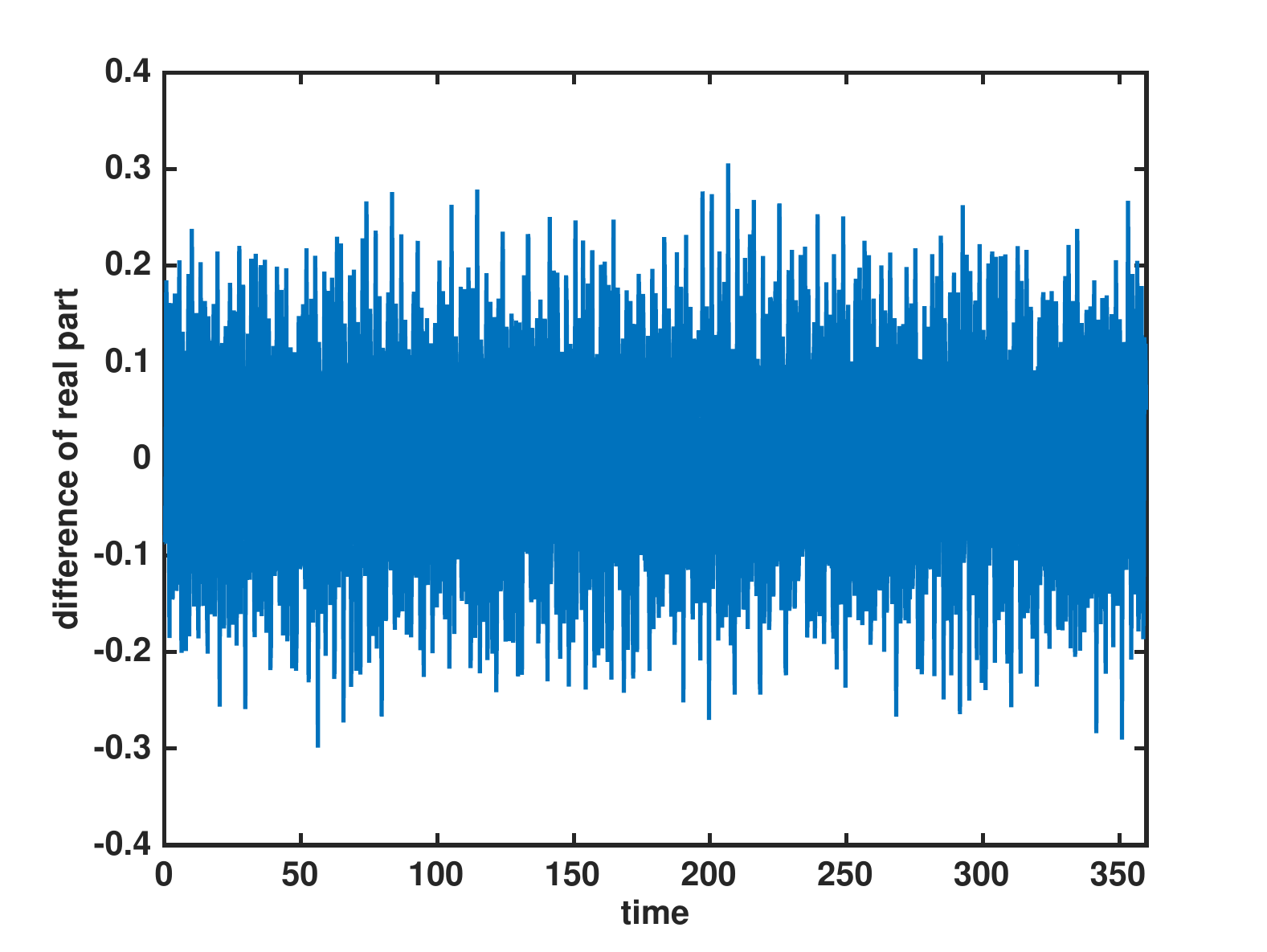}}
\caption{{ Strang}: (a) Time evolution of imaginary part of $\psi(t, 0,0,0)$. (b) Time evolution of real part of $\psi(t, 0,0,0)$. (c) The difference $\psi([T, 2T], 0,0,0)-\psi([0, T], 0,0,0)$, $T=360$ (imaginary part). (d) The difference $\psi([T, 2T], 0,0,0)-\psi([0, T], 0,0,0)$, $T=360$ (real part).}
\label{fig:strang360}
\end{figure}

\begin{figure}[htbp]
\subfigure[]{\includegraphics[scale=0.45]{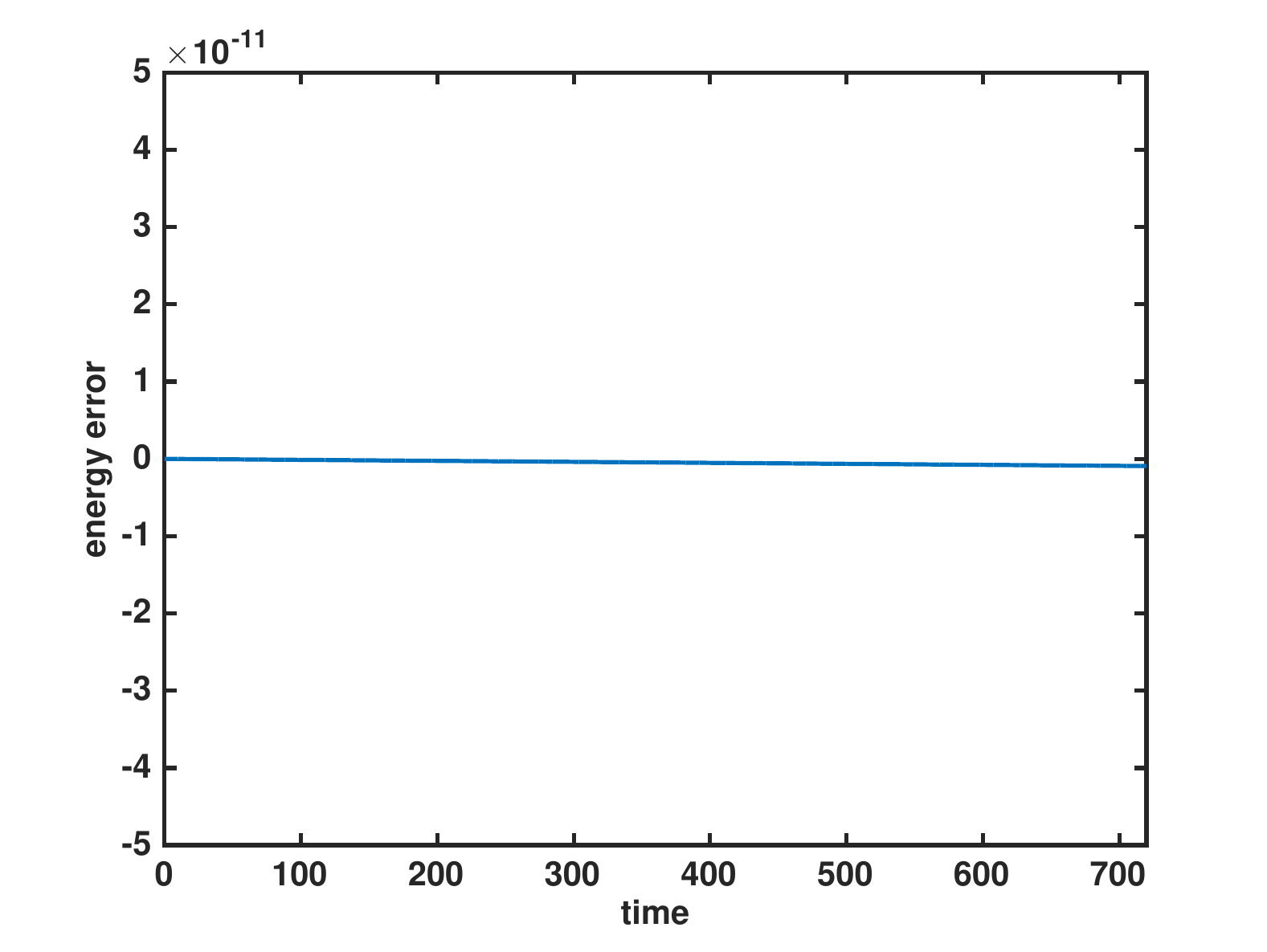}}
\subfigure[]{\includegraphics[scale=0.45]{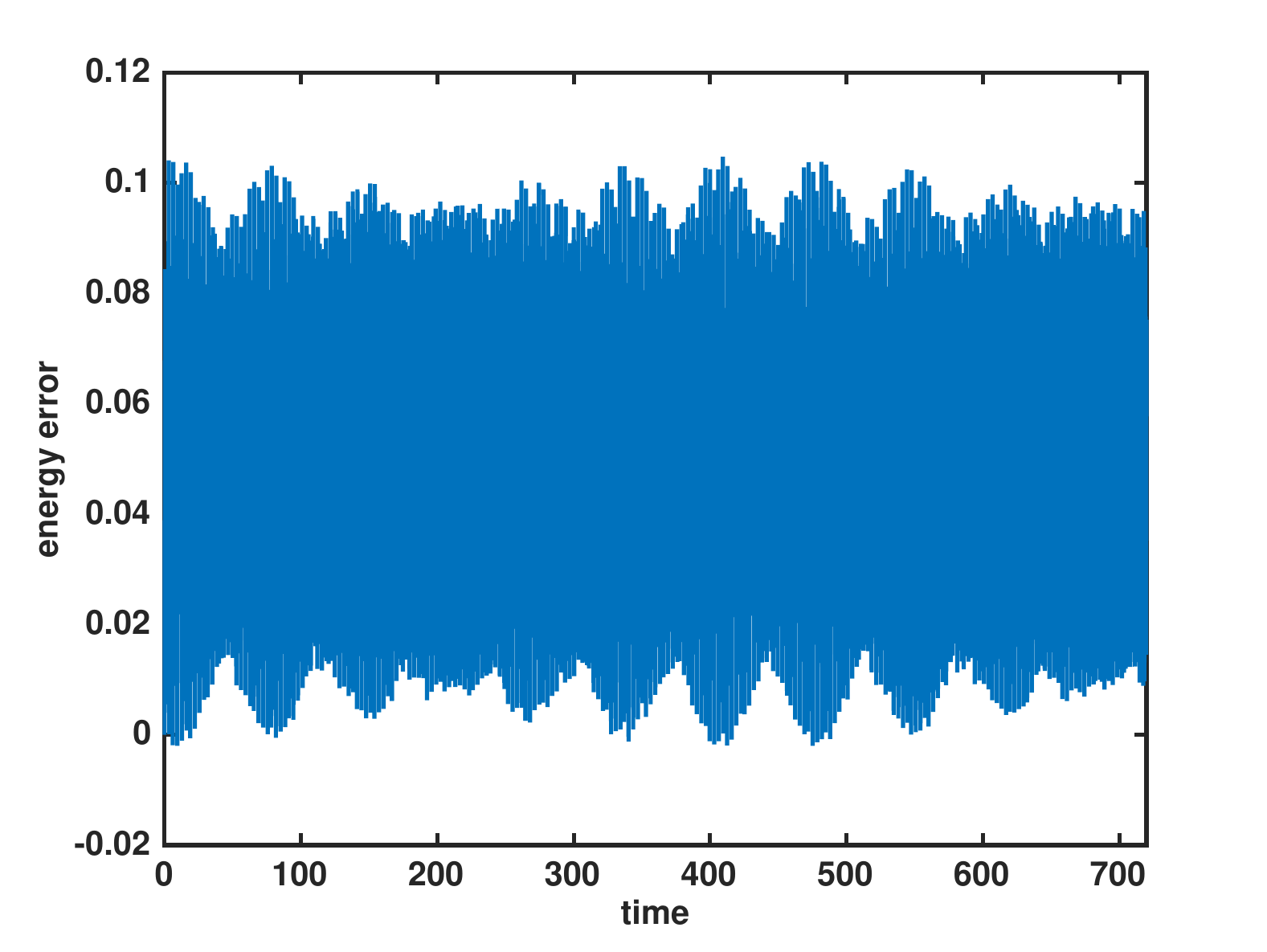}}
\caption{(a) Time evolution of the energy error by ESQM. (b) Time evolution of the energy error by Strang splitting.}\label{fig:error360}
\end{figure}

\begin{figure}[htbp]
\center{
\subfigure[]{\includegraphics[scale=0.45]{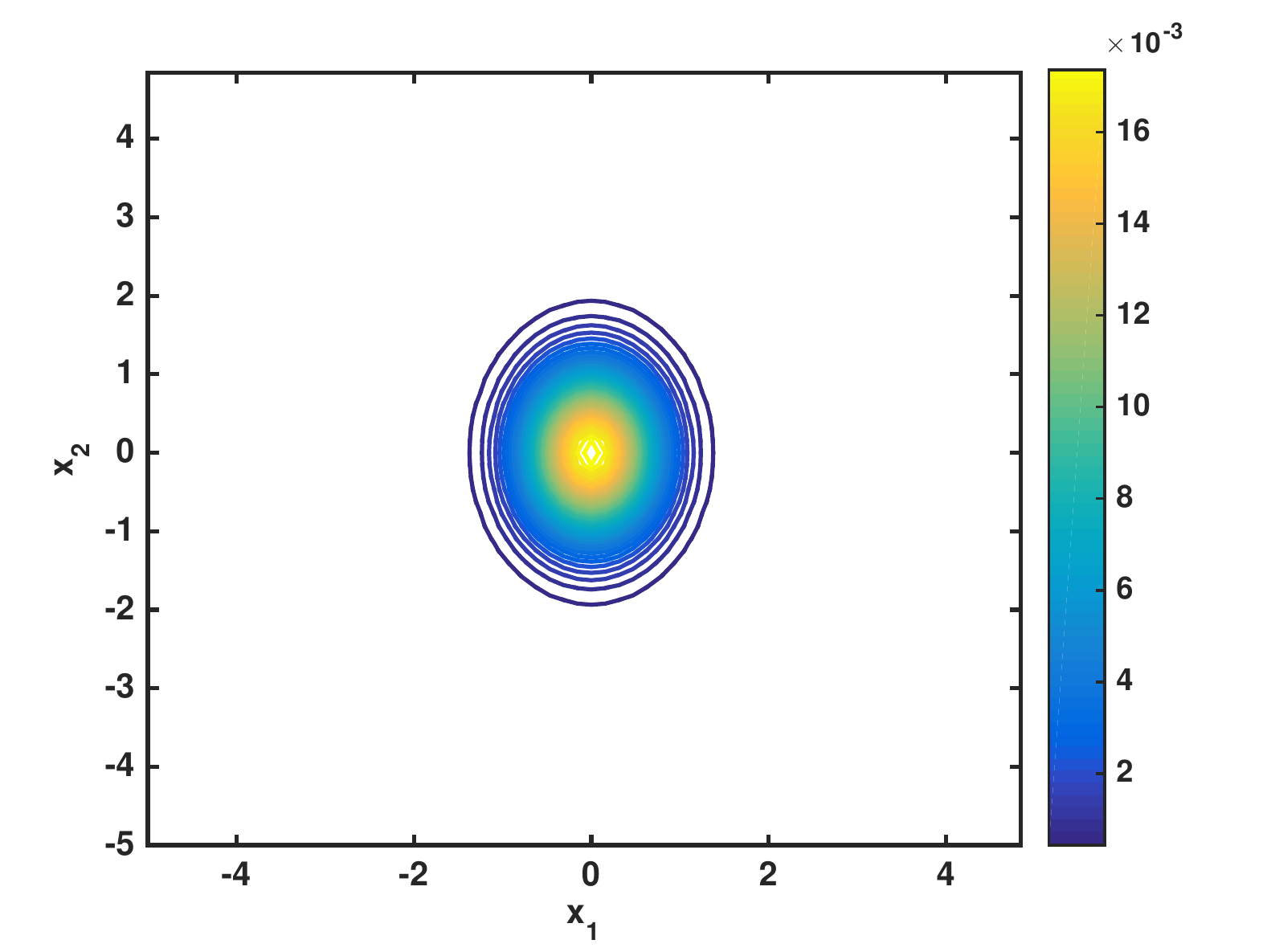}}\\
\subfigure[]{\includegraphics[scale=0.45]{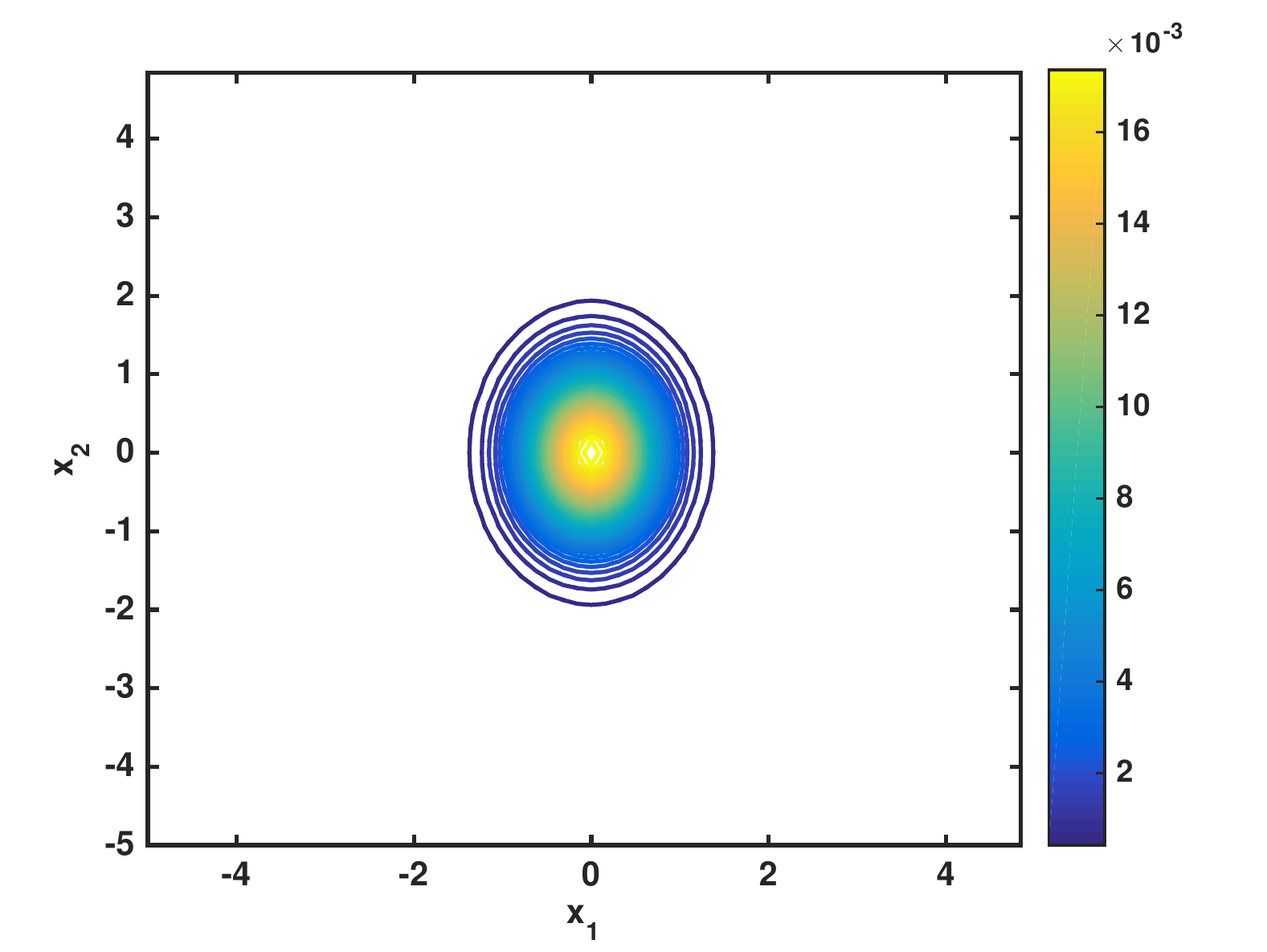}}
\subfigure[]{\includegraphics[scale=0.45]{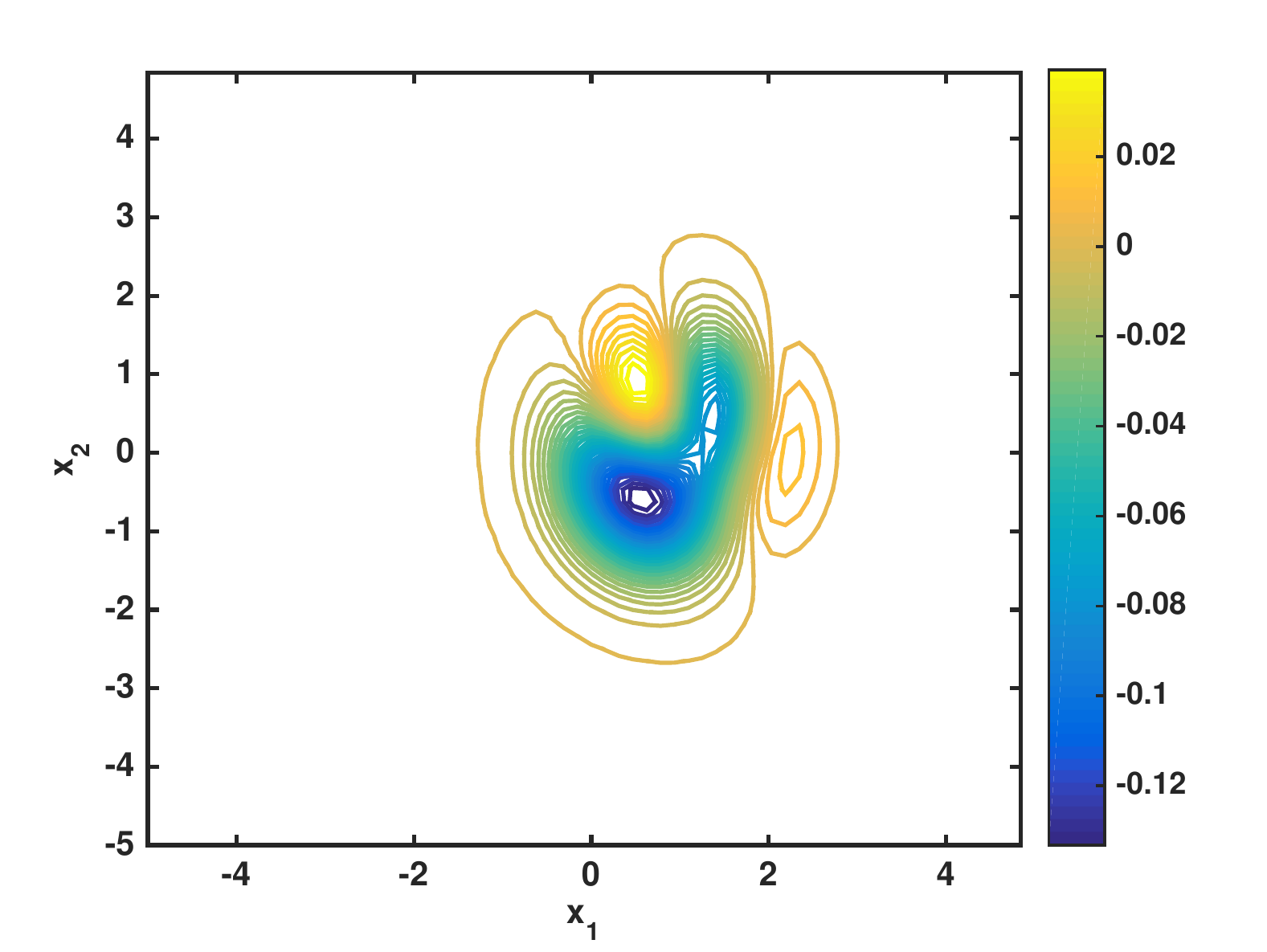}}}
\caption{(a) Initial contour plot of the real part of $\psi(t=0, x_1,x_2,0)$; (b) contour plot of real part of $\psi(t=360,x_1,x_2,0)$ by ESQM; (c) contour plot of real part of $\psi(t=360, x_1,x_2,0)$ by Strang. }\label{fig:contour360}
\end{figure}

%

\noindent{\bf 3D magnetic Schr\"odinger equation}\\
To end this part, the following 3D magnetic Schr\"odinger equation is considered (see \cite{Ostermann}),
\begin{equation}\label{eq:oster}
i \partial_t \psi({\bf x},t) = -\frac{1}{2}\Delta \psi({\bf x},t) + i {\bf A}({\bf x}) \cdot \nabla \psi({\bf x},t) + \frac{1}{2}|{\bf A}({\bf x})|^2\psi({\bf x},t) + V_{nq}({\bf x})\psi({\bf x},t),
\end{equation}
where ${\bf A}({\bf x}) = {\bf x} \times {\bf B}$, ${\bf B}=\transp{(1, 0.1, 2)}$, ${\bf x}=(x_1,x_2,x_3)$ and 
\begin{align} \label{eq:potential}
V_{nq}({\bf x}) =\alpha \left(20\cos(\frac{2\pi(x_1+5)}{10}) + 20\cos(\frac{2\pi(x_2+5)}{10}) + 20\cos(\frac{2\pi(x_3+5)}{10}) + 60\right), \alpha \in \mathbb{R}.
\end{align}
The initial condition is
$$
\psi_0({\bf x}) = \frac{2^{3/8}}{\pi^{3/2}} \exp\left(-\frac{\sqrt{2}}{2}((x_1-1)^2 + x_2^2 +x_3^2)\right),
$$
and the numerical parameters are: the spatial domain $[-5,5]^3$ is discretized by $N_1=N_2=N_3=64$ points  
and the final time is $t = 1$.  Here we consider three methods 
\begin{itemize}
\item ESQM (see \eqref{esqm2} with $f({\bf x}, |\psi|^2) = V_{nq}({\bf x})$ given by \eqref{eq:potential}  and with \eqref{eq:MS2}); this method is second order accurate in time. 
\item ESR (see Appendix \ref{3dmag}); this method is second order accurate in time. 
\item Strang (see  Appendix \ref{3dmag}); this method is second order accurate in time. 
\end{itemize}
The three methods are compared with different step sizes $\Delta t$ to solve the system \eqref{eq:oster}. 
The energy errors of these three methods are presented in Figure \ref{fig:3Derror}, 
by studying the influence of the parameter $\alpha$ which measures the amplitude of the non-quadratic part in \eqref{eq:oster}.  
By comparing the energy errors, we can see that the ESQM is the most accurate one, 
as it solves the linear quadratic part exactly.  Moreover, when $\alpha$ is smaller, i.e., the non-quadratic term in system~(\ref{eq:oster}) becomes smaller, we can see that the advantage of ESQM is more obvious. 


\begin{figure}[htbp]
\center{
\subfigure[]{\includegraphics[scale=0.35]{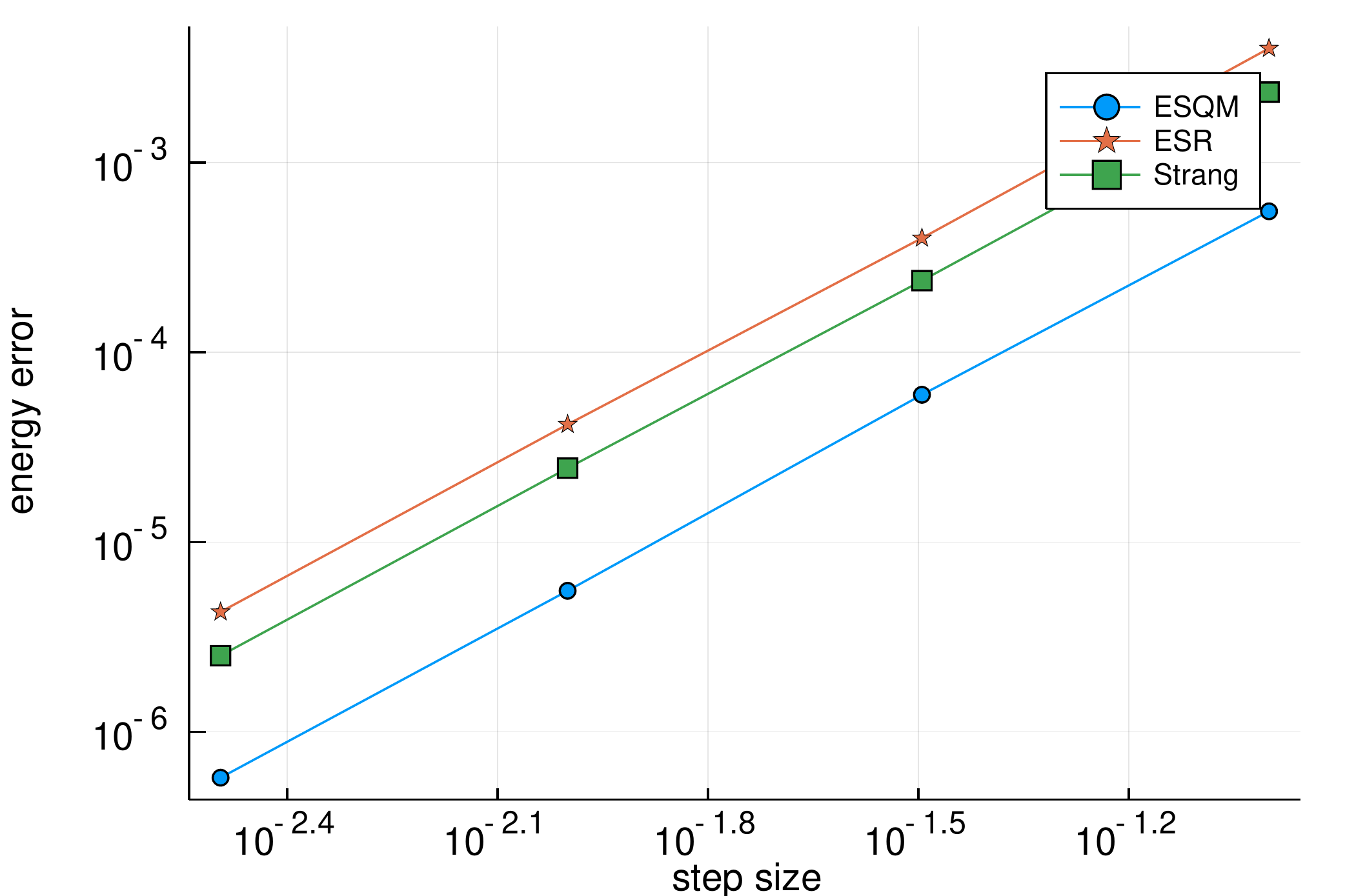}}
\subfigure[]{\includegraphics[scale=0.35]{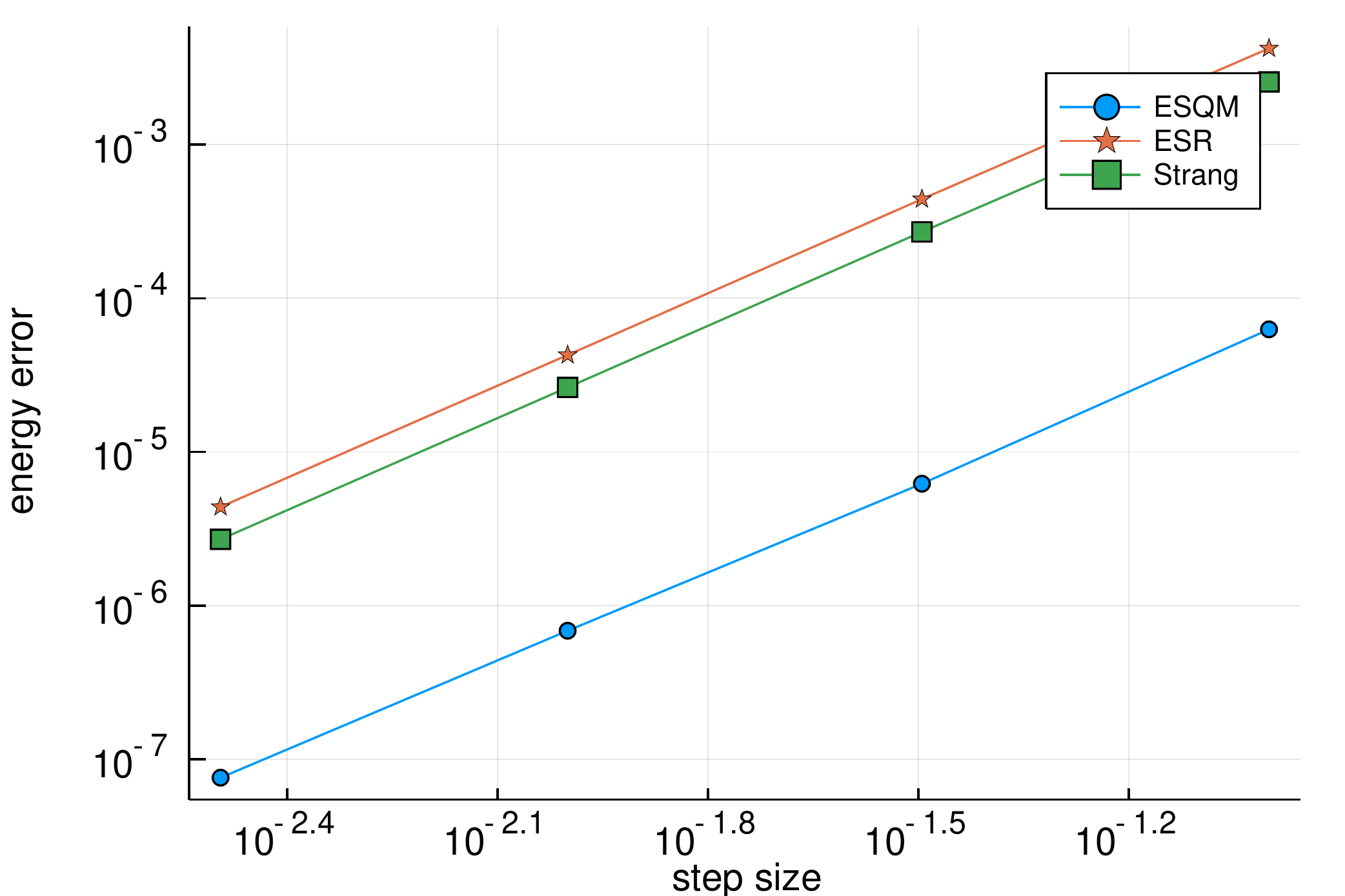}}
}
\caption{Plots of energy error with step size at $t=1$ with grids $N_1=N_2=N_3=64$. (a) $\alpha = 0.1$, (b) $\alpha = 0.01$.}\label{fig:3Derror}
\end{figure}



\newpage

\section{Appendix}
  

\subsection{2D magnetic Schr\"odinger equation}
\label{magschro_split}
\begin{equation}
i\epsilon \partial_t \psi({\bf x},t) = -\frac{\epsilon^2}{2} \Delta  \psi({\bf x},t) + i\epsilon {\bf A} \cdot \nabla \psi ({\bf x},t) + \frac{1}{2}{|\bf A|}^2\psi({\bf x},t),
\end{equation}
where ${\bf x} = (x_1,x_2) \in \mathbb{R}^2$, ${\bf A} = \frac{1}{2}(A_1, A_2)$, $A_1= -x_2$, $A_2 = x_1$. 
The above system can be split into three systems: 
\begin{align}
&i\epsilon \partial_t \psi({\bf x},t) = -\frac{\epsilon^2}{2} \Delta  \psi({\bf x},t),\\
& \partial_t \psi({\bf x},t) =  {\bf A} \cdot \nabla \psi ({\bf x},t), \\ 
&i\epsilon \partial_t \psi({\bf x},t) = \frac{1}{2}{|\bf A|}^2\psi({\bf x},t),
\end{align}
The solutions of the above three subsystems can be obtained by operators $e^{it\frac{\epsilon}{2} \Delta }$, $e^{t\text{Rot}}$, and $e^{tV}$ respectively.
Since the second is nothing but a 2D rotation, we call the associated solution $e^{t\text{Rot}}$. 
Then we have the following second order splitting method 
\begin{equation}\label{eq:ESRSTRANG}
\psi^{n+1} = e^{\frac{\Delta t}{2}V} e^{i \Delta t\frac{\epsilon}{4} \Delta } e^{\Delta t\text{Rot}} e^{i\Delta t\frac{\epsilon}{4} \Delta } e^{\frac{\Delta t}{2}V}, 
\end{equation}
from which we derive two variants according to the treatment of $e^{\Delta t\text{Rot}}$. Indeed,  
{\bf ESR} denotes the splitting method (\ref{eq:ESRSTRANG}) when $e^{\Delta t \text{Rot}}$ is solved by exact splittings for transport equation in Proposition \ref{prop_rot_gen}. {\bf Strang} denotes (\ref{eq:ESRSTRANG}) when $e^{\Delta t \text{Rot}}$ is approximated by Strang directional splitting.

\subsection{2D rotating Gross-Pitaevskii equation}
\label{GP_split}
The rotating Gross-Pitaevskii equation (GPE)~\cite{Bao, wang} is
\begin{equation}
\label{gpe_app}
\begin{aligned}
&{i}\partial_t \psi({\bf x}, t) = -\frac{1}{2}\Delta \psi({\bf x},t) + V({\bf x}) \psi({\bf x},t) + \beta | \psi |^2 \psi({\bf x},t) - \Omega L_{x_3} \psi({\bf x},t),\  {\mathbf x} \in {\mathbb R}^2,
\end{aligned}
\end{equation}
where $\psi({\bf x}, t)$ is the macroscopic wave function, ${\bf x} = (x_1, x_2)$, $L_{x_3} = - i (x_1\partial_{x_2} - x_2 \partial_{x_1})$. 
Two operator splittings are presented to approximate \eqref{gpe_app}.  

\subsubsection{ESR splitting} 
\label{esr_gpe}
The above equation \eqref{gpe_app} can be split into three parts which can be solved exactly in time 
\begin{align*}
&i \frac{\partial \psi({\bf x}, t)}{\partial t} = -\frac{1}{2}\Delta \psi({\bf x},t),\\
& \frac{\partial \psi({\bf x}, t)}{\partial t} =  \Omega L_{x_3} \psi({\bf x},t),\\
&i \frac{\partial \psi({\bf x}, t)}{\partial t} =  V({\bf x})\psi({\bf x},t) + \beta |\psi|^2 \psi({\bf x},t). 
\end{align*}
The solutions of the above three subsystems can be obtained by operators  $e^{-it\frac{1}{2}\Delta}$, $e^{t \text{Rot}}$, and $e^{t \text{VN}}$ respectively. 
Then we have the following second order method splitting method: 
\begin{equation}\label{eq:GPEES}
\psi^{n+1}({\mathbf x}) = e^{\frac{\Delta t}{2} \text{VN}} e^{-i\Delta t\frac{1}{4}\Delta} e^{\Delta t \text{Rot}} e^{-i\Delta t\frac{1}{4}\Delta}e^{\frac{\Delta t}{2} \text{VN}} \psi^n({\mathbf x}),
\end{equation} 
from which we derive two variants according to the treatment of $e^{\Delta t\text{Rot}}$ (the part $e^{\frac{\Delta t}{2} \text{VN}}$ 
can be solved exactly). As for magnetic Schr\"odinger case, 
{\bf ESR} denotes the splitting method (\ref{eq:GPEES}) when  $e^{\Delta t \text{Rot}}$ 
is solved by exact splittings for transport equation in Proposition \ref{prop_rot_gen}. 

\subsubsection{BW method}
\label{bw}
Here we recall the splitting method introduced in \cite{wang} to approximate \eqref{gpe_app}. 
We will call it BW in the sequel. 
BW splitting for rotating GPE (\ref{gpe_app}) is based on the following two-steps splitting 
\begin{align}
&i\partial_t \psi({\bf x}, t) = -\frac{1}{2}\Delta \psi({\bf x},  t)  - \Omega L_{x_3} \psi({\bf x}, t),\label{eq:first}\\
&\partial_t \psi({\bf x}, t) = V({\bf x})\psi({\bf x}, t) + \beta |\psi({\bf x}, t)|^2\psi({\bf x}, t). \label{eq:non}
\end{align}
Then, the authors in  \cite{wang} noticed  that (\ref{eq:first}) can be split further as 
\begin{align}
&i\partial_t \psi({\bf x}, t) = -\frac{1}{2}\partial^2_{x_1}\psi({\bf x}, t) -i\Omega x_2 \partial_{x_1} \psi({\bf x}, t), \label{eq:x}\\
&i\partial_t \psi({\bf x}, t) = -\frac{1}{2}\partial^2_{x_2}\psi({\bf x}, t) + i\Omega x_1 \partial_{x_2} \psi({\bf x}, t).\label{eq:y}
\end{align}
The solutions of subsystems (\ref{eq:non}), (\ref{eq:x}) and (\ref{eq:y}) can be obtained by operators $e^{tN}, e^{tX}$ and $e^{tY}$ respectively,  the second order BW method is then derived from the following composition 
\begin{equation}
\begin{aligned}\label{eq:strangBW}
\psi^n({\bf x}) &= \left( e^{\Delta t/2 \, Y}e^{\Delta t/2\,X}e^{\Delta t\, N}e^{\Delta t/2\,X}e^{\Delta t/2\, Y} \right)^n \psi_0({\bf x}),\\
& =  e^{\Delta t/2\, Y} (e^{\Delta t/2\, X}e^{\Delta t\, N}e^{\Delta t/2\, X}e^{\Delta t\, Y})^{n-1}e^{\Delta t/2\, X}e^{\Delta t\, N}e^{\Delta t/2\, X}e^{\Delta t/2\, Y} \psi_0({\bf x}).
\end{aligned}
\end{equation}
Combined with Fourier pseudo-spectral method in space, we can see that in each time step, we need six calls to FFT. 

\subsection{3D time-periodic quadratic linear  Schr\"odinger equation}
For (\ref{eq:sch}) with $f=0$ and $B$ and $V$ are specified in (\ref{eq:3601}) and (\ref{eq:3602}), 
we consider two numerical methods: ESQM and a standard Strang operator splitting. 

\subsubsection{Exact splitting}
\label{sec:3D coefficients}
The coefficients for ESQM \eqref{eq:MS2} are given by 
\begin{align*}
&A_{\Delta t} \simeq \begin{pmatrix} 0.503369336514750 &0.09260872887966 &-0.086577853155386   \\
0.092608728879667 & 0.499175997238123&0.090475411725230   \\
 -0.086577853155386 &0.090475411725230  &0.482430618251455 
 \end{pmatrix}, \\ 
 &V_{\Delta t}^{(\ell)} \simeq \begin{pmatrix} 1.838313777101704 & 0& 0  \\
0 &1.405233579215994  &0 \\
  0 &0  &2.416160688906186
 \end{pmatrix}, \\ 
 &V_{\Delta t}^{(r)} \simeq \begin{pmatrix} 0.765638127548775 &0.097739062052903 & -0.244124321719139  \\
0.097739062052903 &1.408683914880933 &  0.141925135897144 \\
-0.244124321719139 &0.14192513589714 &  3.535113753227984
 \end{pmatrix}, \\
 &L_{\Delta t} \simeq \begin{pmatrix} 0 & 0&0   \\
0.957867410476376 &0 & 0  \\
 -0.917880413070041  &1.133563918623215 & 0  
 \end{pmatrix}, \\
& U_{\Delta t} \simeq \begin{pmatrix} 0 & -1.132325985517193&  0.915677911046419 \\
0 & 0 &  -0.957661219232001 \\
0 & 0& 0
 \end{pmatrix}.
\end{align*}

\subsubsection{Strang method}
\label{3dtime_split}
Classically, we use the following operator splitting 
\begin{align*}
&i \frac{\partial \psi({\bf x}, t)}{\partial t} = -\frac{1}{2}\Delta \psi({\bf x},t),\\
& \frac{\partial \psi({\bf x}, t)}{\partial t} =-  ({ B}  {\bf x})\cdot \nabla \psi({\bf x},t),\\
&i \frac{\partial \psi({\bf x}, t)}{\partial t} =  V({\bf x})\psi({\bf x},t).
\end{align*}
The solutions of the above three subsystems can be obtained by operators $e^{-it\frac{1}{2}\Delta}$, $e^{t \text{Rot}}$, and $e^{-it \text{V}}$ respectively so that we have the following second order splitting method
\begin{equation}\label{eq:360ES}
\psi^{n+1}({\mathbf x}) = e^{-i\frac{\Delta t}{2} \text{V}} e^{-i\Delta t\frac{1}{4}\Delta} e^{\Delta t \text{Rot}} e^{-i\Delta t\frac{1}{4}\Delta}e^{-i\frac{\Delta t}{2} \text{V}} \psi^n({\mathbf x}).
\end{equation}
{\bf Strang} denotes (\ref{eq:360ES}) when $e^{\Delta t \text{Rot}}$ is also approximated by a Strang directional splitting.

\subsection{3D magnetic Schr\"odinger equation}
\label{3dmag}
From (\ref{eq:oster}),  
where ${\bf A}({\bf x}) = {\bf x} \times {\bf B}$, ${\bf B}=\transp{(1, 0.1, 2)}$ and $V$ given by \eqref{eq:potential},  
we can use  the following operator splitting 
\begin{align*}
&i \frac{\partial \psi({\bf x}, t)}{\partial t} = -\frac{1}{2}\Delta \psi({\bf x},t),\\
& \frac{\partial \psi({\bf x}, t)}{\partial t} = {\mathbf  A}  ({\bf x})\cdot \nabla \psi({\bf x},t), \\ 
&i \frac{\partial \psi({\bf x}, t)}{\partial t} =   \frac{1}{2}|{\bf A}({\bf x})|^2\psi({\bf x},t) + V({\bf x})\psi({\bf x},t),
\end{align*}
The solutions of the above three subsystems can be obtained by operators
$e^{-it\frac{1}{2}\Delta}$, $e^{t \text{Rot}}$, and $e^{t \text{VA}}$ respectively  
and we can derive a second order splitting method:
\begin{equation}\label{eq:3DmagneticES}
\psi^{n+1}({\mathbf x}) = e^{\frac{\Delta t}{2} \text{VA}} e^{-i\Delta t\frac{1}{4}\Delta} e^{\Delta t \text{Rot}} e^{-i\Delta t\frac{1}{4}\Delta}e^{\frac{\Delta t}{2} \text{VA}} \psi^n({\mathbf x}).
\end{equation}
{\bf ESR} denotes the splitting method (\ref{eq:3DmagneticES}) when  $e^{\Delta t \text{Rot}}$ is solved by exact splittings for transport equation in Proposition \ref{prop_rot_gen}. {\bf Strang} denotes (\ref{eq:3DmagneticES}) when $e^{\Delta t \text{Rot}}$ is approximated by Strang directional splitting. 

The coefficients when $\Delta t = 0.1$ for ESQM~\eqref{esqm2} are as follows 
\begin{align*}
&A_{\Delta t} \simeq \begin{pmatrix} 0.506160069704187 &0.098840554692409&0.001683128724191   \\
0.098840554692409 & 0.508317718832811&0.050167780151672   \\
 0.001683128724191 &0.050167780151672  &0.501715861437068 
 \end{pmatrix}, \\ 
 &V_{\Delta t}^{(\ell)} \simeq \begin{pmatrix} 2.025343613765655 & 0& 0  \\
0 &0.508168767491105  &0 \\
  0 &0  &0.000099459606977
 \end{pmatrix}, \\ 
 &V_{\Delta t}^{(r)} \simeq \begin{pmatrix} 0.072891278447532 &0.242556937819776 & -1.026420948565178  \\
0.242556937819776 &1.959142247295385 &  -0.046535665904951 \\
-1.026420948565178  &-0.046535665904951 &  0.508102737430800
 \end{pmatrix}, \\
 &L_{\Delta t} \simeq \begin{pmatrix} 0 & 0&0   \\
2.003434507092443 &0 & 0  \\
-0.099043028107977  &1.016569585390557& 0  
 \end{pmatrix}, \\
& U_{\Delta t} \simeq \begin{pmatrix} 0 & -1.963756896350695&  -0.099988990937417 \\
0 & 0 & -1.006635420674690 \\
0 & 0& 0
 \end{pmatrix}.
\end{align*}

\subsection{Proof of the period 360}
\label{appendixc}
\begin{lemma}
\label{lemma_per}
The function $t\mapsto U_t = e^{ i t ( \Delta/2 - V(x)) - t Bx \cdot \nabla }$, where $V$ and $B$ are given by \eqref{eq:3601} satisfies 
$$
\forall t\in \mathbb{R}, \ U_{t+180} = -U_t.
$$
\end{lemma}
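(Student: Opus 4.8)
The plan is to recognise $U_t$ as the quantum propagator $e^{-itH^w}$ of a positive-definite quadratic Hamiltonian and, using the group property $U_{t+180}=U_{180}U_t$, to reduce the claim to the single identity $U_{180}=-\mathrm{Id}$. Reading the symbol off \eqref{quad_pde}, the generator of $U_t$ is $H(x,\xi)=\tfrac12|\xi|^2+(Bx)\cdot\xi+V(x)$, so that $U_t=e^{-itH^w}$. Completing the square in $\xi$ gives $H(x,\xi)=\tfrac12|\xi+Bx|^2+x^\top W x$ with $W=\tfrac12 B^2+D_V$, $D_V=\tfrac{\pi^2}{9}\operatorname{diag}(\lambda_1,\lambda_2,\lambda_3)$; since $B=\tfrac{\pi}{3}N$ with $N^2=\mathbf 1\mathbf 1^\top-3I$, one gets $W=\tfrac{\pi^2}{18}\bigl(\mathbf 1\mathbf 1^\top+\operatorname{diag}(2\lambda_1-3,2\lambda_2-3,2\lambda_3-3)\bigr)$, which is positive definite because $\lambda_k>3/2$ by \eqref{eq:3602}; hence $H$ is a positive-definite quadratic form and $H^w$ is (essentially) self-adjoint.

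Next I would invoke Williamson's theorem together with the metaplectic covariance of the Weyl quantization (as used in \cite{essiqo,Lars}): a positive-definite quadratic form is symplectically equivalent to $\sum_{k=1}^3\omega_k(|\xi_k|^2+x_k^2)/2$, where $\omega_1,\omega_2,\omega_3>0$ are the symplectic eigenvalues of $H$, i.e. the moduli of the purely imaginary eigenvalues of $J\mathcal S$, $\mathcal S$ being the symmetric matrix of the quadratic form $2H$. Consequently $H^w$ is unitarily conjugate to $\sum_k\omega_k(a_k^* a_k+\tfrac12)$ and
\[
\operatorname{Spec}(H^w)=\Bigl\{\tfrac12(\omega_1+\omega_2+\omega_3)+n_1\omega_1+n_2\omega_2+n_3\omega_3\ :\ (n_1,n_2,n_3)\in\mathbb Z_{\ge 0}^3\Bigr\}.
\]

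The computational heart is to make the $\omega_k$ explicit. Looking for normal modes $x(t)=v\,e^{i\omega t}$ of the classical equations of motion $\ddot x-2B\dot x+2Wx=0$, the $\omega_k$ are the positive roots of $\det(\omega^2 I+2i\omega B-2W)=0$; since $iB$ is Hermitian this expression is real and even in $\omega$, hence a cubic polynomial $P(s)$ in $s=\omega^2$ whose coefficients are explicit polynomials in $\operatorname{tr}W$, $\operatorname{tr}W^2$ and $\det W$, and therefore — through the formula for $W$ above — in the elementary symmetric functions of $\lambda_1,\lambda_2,\lambda_3$, which Vieta's formulas read off from $Q(X)=7200X^3-72196X^2+222088X-216341$. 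Carrying out this substitution (this is precisely what the coefficients of $Q$ were reverse-engineered to achieve) yields $\omega_k=\tfrac{\pi}{90}m_k$ with $m_1,m_2,m_3\in\mathbb Z_{>0}$ and $m_1+m_2+m_3$ odd.

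Finally, for any $E=\tfrac12\sum_j\omega_j+\mathbf n\cdot\omega\in\operatorname{Spec}(H^w)$ one has $180E=90\sum_j\omega_j+180\,\mathbf n\cdot\omega\equiv 90\sum_j\omega_j=\pi(m_1+m_2+m_3)\equiv\pi\pmod{2\pi}$, using $180\omega_k=2\pi m_k$ and that $\sum_j m_j$ is odd; hence $e^{-180iE}=-1$ on the whole spectrum, so $U_{180}=e^{-180 i H^w}=-\mathrm{Id}$ and $U_{t+180}=U_{180}U_t=-U_t$ for every $t\in\mathbb R$. (Alternatively one avoids spectral theory: $180\omega_k\in2\pi\mathbb Z$ forces $e^{180 J\mathcal S}=I$, so $U_{180}=\pm\mathrm{Id}$ by the metaplectic covering, and testing on the Gaussian ground state $\psi_0$, with $H^w\psi_0=\tfrac12\sum_j\omega_j\,\psi_0$, gives $U_{180}\psi_0=e^{-i\pi\sum_j m_j}\psi_0=-\psi_0$, which fixes the sign.) The step I expect to be the real obstacle is the determinant–Vieta computation above: verifying that the symplectic eigenvalues lie in $\tfrac{\pi}{90}\mathbb Z$ with odd index sum is routine but delicate and relies entirely on the special cubic $Q$.
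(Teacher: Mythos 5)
Your strategy coincides with the paper's: reduce via the group property to the single identity $U_{180}=-\mathrm{Id}$, symplectically diagonalize the positive-definite symbol (Williamson's theorem, quoted in the paper as H\"ormander's Theorem \ref{lars_you_are_the_best}), transport this to the operator level by metaplectic covariance so that $U_t$ is unitarily conjugate to a product of three harmonic-oscillator propagators, and conclude from the parity of $m_1+m_2+m_3$, where the oscillator frequencies are $\frac{\pi}{90}m_k$ in your normalization (the paper's $\omega_j$ are half of yours, so that $180\omega_j=\pi m_j$). Two of your local arguments are in fact cleaner than the paper's: the positive definiteness of the symbol via $H=\tfrac12|\xi+Bx|^2+x^{\top}Wx$ with $W=\tfrac{\pi^2}{18}\bigl(\mathbf{1}\mathbf{1}^{\top}+\mathrm{diag}(2\lambda_k-3)\bigr)$ (the paper only suggests checking the eigenvalues of $Q_{\eqref{eq_QMS}}$ numerically), and the fixing of the metaplectic sign by testing on the Gaussian ground state rather than by a continuity argument.

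The one genuine gap is the step you flag yourself: you never establish that the normal frequencies are $\frac{\pi}{90}m_k$ with $m_k$ positive integers of odd sum; you only assert that ``carrying out this substitution yields'' such values. That assertion is the entire content of the lemma --- the cubic $Q(X)=7200X^3-72196X^2+222088X-216341$ was reverse-engineered precisely so that this happens, and nothing short of the explicit evaluation certifies it; without it you have proved only the conditional statement ``if the frequencies lie in $\frac{\pi}{90}\mathbb{Z}$ with odd index sum, then $U_{t+180}=-U_t$''. The paper closes exactly this gap in its Step 2: it writes out the characteristic polynomial of $JQ_{\eqref{eq_QMS}}$, whose coefficients are (as you observe) symmetric functions of $\lambda_1,\lambda_2,\lambda_3$ and hence computable from the coefficients of $Q$ by Vieta's formulas, obtains an explicit even sextic, and verifies by direct evaluation that $i\pi/9$, $i5\pi/12$ and $i\frac{11\pi}{15}$ are roots, i.e. $(m_1,m_2,m_3)=(20,75,132)$ with $m_1+m_2+m_3=227$ odd --- which is precisely the input your final congruence $180E\equiv\pi\pmod{2\pi}$ requires. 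Everything else in your write-up is correct; to complete the proof you must perform (or at least exhibit) that determinant--Vieta computation.
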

\begin{proof} Since $t\mapsto U_t$ is a group, we just have to prove that 
$$U_{180}=-I_{L^2(\mathbb{R}^3)}.$$
We recall that by construction, we have $ U_t = e^{-t q_{\eqref{eq_QMS}}^w}$ where
$$
q_{\eqref{eq_QMS}}^w = i\frac{|\boldsymbol{\xi}|^2}2   + iB {\mathbf{x}} \cdot {\boldsymbol \xi} + iV({\mathbf{x}})
$$

\noindent \emph{Step 1: To conjugate $q_{\eqref{eq_QMS}}^w$ to a sum of harmonic oscillators.} We are going to prove that there exists $V\in \mathcal{U}(L^2(\mathbb{R}^n))$ such that
\begin{equation}
\label{its_the_harm_osci}
U_t = V \exp(-it \sum_{j=1}^3 \omega_j  (x_j^2 -\partial_{x_j}^2) ) V^*
\end{equation}
where $(\omega_1,\omega_2,\omega_3) = \frac{\pi}{180}(20,75,132)$. Assuming first this decomposition, we deduce that
$$
U_{180} = V \exp(-20 i \pi  (x_1^2 -\partial_{x_1}^2) ) \exp(-75 i \pi    (x_2^2 -\partial_{x_2}^2) ) \exp(-132 i \pi   (x_3^2 -\partial_{x_3}^2) )V^*.
$$
But, in dimension $1$, the eigenvalues of the harmonic oscillator $x^2-\partial_x^2$ being the odd positive integers, we know that $\exp(i\pi (x^2-\partial_x^2) ) = -I_{L^2(\mathbb{R}^3)}$. Thus, we deduce that 
$$
U_{180} = V I_{L^2(\mathbb{R}^3)} (-I_{L^2(\mathbb{R}^3)}) I_{L^2(\mathbb{R}^3)}  V^* = -I_{L^2(\mathbb{R}^3)}.
$$
 
 In order to prove \eqref{its_the_harm_osci} we are going to apply the following theorem due to H\"ormander.
 \begin{theorem}[H\"ormander, Theorem 21.5.3 in \cite{Lars_book}] 
\label{lars_you_are_the_best}
Let $Q\in S_{2n}^{++}(\mathbb{R})$ be a real symmetric positive matrix of size $2n$. There exists a real symplectic matrix $P\in \mathrm{Sp}_{2n}(\mathbb{R})$ of size $2n$ such that and some positive numbers $\omega_1,\dots,\omega_n$ such that
$$
\transp{P}QP = D(\omega)
$$
where 
$D(\omega) = \mathrm{diag}(\omega_1,\dots,\omega_n,\omega_1,\dots,\omega_n)$ is the diagonal matrix such that, for $j=1,\dots,n$, $D(\omega)_{j,j} = D(\omega)_{j+n,j+n} = \omega_j$ .
\end{theorem}
Indeed, here, it can be checked that $Q_{\eqref{eq_QMS}}$ (the matrix of $q_{\eqref{eq_QMS}}$) is a symmetric positive matrix (computing, for example, an approximation of its eigenvalues). Thus, applying Theorem \ref{lars_you_are_the_best}, we get a symplectic matrix $P$ and some positive 
numbers $\omega_1<\omega_2<\omega_3$ such that
\begin{equation}
\label{useful}
\transp{P}Q_{\eqref{eq_QMS}}P = D(\omega).
\end{equation}
Consequently, since $P$ is symplectic, we have
$$
\exp(2tJQ_{\eqref{eq_QMS}}) =  P \exp(2tJD(\omega)) P^{-1},
$$
where $J$ is the symplectic matrix of $\mathbb{R}^{2n}$.
Now, applying the monoid morphism (Theorem 3.1 in~\cite{essiqo}) introduced also by H\"ormander in \cite{Lars}, we get a function $t\mapsto \sigma_t\in \{\pm 1\}$ such that
$$
\forall t\in \mathbb{R}, \ U_t = e^{-itq_{\eqref{eq_QMS}}^w} = \sigma_t V \exp(-it \sum_{j=1}^3 \omega_j  (x_j^2 -\partial_{x_j}^2) ) V^*
$$
where $\pm V$ is the Fourier Integral Operator associated with $P$. Note that $V$ is unitary. Furthermore, by a straighforward argument of continuity we deduce that $\sigma_t=1$ for all $t\in \mathbb{R}$. Thus, to conclude, we just have to prove that $(\omega_1,\omega_2,\omega_3) = \frac{\pi}{180}(20,75,132)$.

\noindent \emph{Step 2: To determine $\omega$.} First, we observe that the matrices $JQ_{\eqref{eq_QMS}}$ and $JD(\omega)$ are similar. Indeed, since $P\in \mathrm{Sp}_6(\mathbb{R})$, we have $\transp{P}\in \mathrm{Sp}_6(\mathbb{R})$ and applying \eqref{useful} we deduce that
$$
JD(\omega) = J\transp{P}Q_{\eqref{eq_QMS}}P = (P^{-1}J\transp{P}^{-1}) \transp{P}Q_{\eqref{eq_QMS}}P = P^{-1}JQ_{\eqref{eq_QMS}}P.
$$
A fortiori, $JQ_{\eqref{eq_QMS}}$ and $JD(\omega)$ have the same eigenvalues. Thus, the eigenvalues of $JQ_{\eqref{eq_QMS}}$ are
\begin{equation}
\label{hehe}
\sigma(JQ_{\eqref{eq_QMS}}) = \sigma(JD(\omega)) = \{ i \omega_1,-i \omega_1,i\omega_2,-i\omega_2,i\omega_3,-i\omega_3\}.
\end{equation}
Consequently, to determine $\omega$ we just have to determine the roots of the characteristic polynomial of $JQ_{\eqref{eq_QMS}}$, denoted $\chi_{\eqref{eq_QMS}}$. By a straightforward calculation, we observe that
\begin{multline*}
\left(\frac{3}\pi\right)^{6} \chi_{\eqref{eq_QMS}}(\frac{\pi X}{3}) = X^6 + \frac{\lambda_1+\lambda_2+\lambda_3+3}2 X^4 + \frac{\lambda_1\lambda_2+ \lambda_1\lambda_3+\lambda_2\lambda_3+9/4}4 X^2 \\-  3\frac{\lambda_1+\lambda_2+\lambda_3}{32} + \frac{\lambda_1\lambda_2+ \lambda_1\lambda_3+\lambda_2\lambda_3}8 - \frac{\lambda_1\lambda_2\lambda_3}{8}.
\end{multline*}
But, by construction $\lambda_1  < \lambda_2 < \lambda_3$ are the roots of the polynomial
$$
7200 X^3 - 72196 X^2 + 222088 X - 216341.
$$ 
Thus, $\lambda_1+\lambda_2+\lambda_3$, $\lambda_1\lambda_2+ \lambda_1\lambda_3+\lambda_2\lambda_3$ and $\lambda_1\lambda_2\lambda_3$ are some explicit rational numbers and we deduce that
$$
\left(\frac{3}\pi\right)^{6} \chi_{\eqref{eq_QMS}}(\frac{\pi X}{3}) = X^6+\frac{407}{120}X^4+\frac{123}{80}X^2-\frac7{384}.
$$
Finally, we verify by an explicit computation that
$$
\chi_{\eqref{eq_QMS}}( i\frac{\pi}9 )= \chi_{\eqref{eq_QMS}}( i\frac{5\pi}{12} ) =\chi_{\eqref{eq_QMS}}(i\frac{11 \pi}{15})=0.
$$
So, we deduce of \eqref{hehe} that $(\omega_1,\omega_2,\omega_3) = \frac{\pi}{180}(20,75,132)$.
\end{proof}


\end{document}